\theoremstyle{plain}
\newtheorem{thm}{Theorem}[section]
\newtheorem{lem}[thm]{Lemma}
\newtheorem{prop}[thm]{Proposition}
\newtheorem{cor}[thm]{Corollary}
\newtheorem{conj}[thm]{Conjecture}
\theoremstyle{definition}
\newtheorem{rem}[thm]{Remark}
\newtheorem{rems}[thm]{Remarks}
\newtheorem*{exs}{Examples}
\newcommand{\C}{\mathbb{C}}
\newcommand{\Q}{\mathbb{Q}}
\newcommand{\Z}{\mathbb{Z}}
\newcommand{\F}{\mathbb{F}}
\newcommand{\R}{\mathbb{R}}
\newcommand{\Del}{\Delta}
\newcommand{\Gam}{\Gamma}
\newcommand{\gam}{\gamma}
\DeclareMathOperator{\SL}{\mathrm{SL}}
\DeclareMathOperator{\SU}{\mathrm{SU}}
\DeclareMathOperator{\PSL}{\mathrm{PSL}}
\DeclareMathOperator{\M}{\mathrm{M}}
\DeclareMathOperator{\GL}{\mathrm{GL}}
\DeclareMathOperator{\PGL}{\mathrm{PGL}}
\DeclareMathOperator{\Isom}{\mathrm{Isom}}
\DeclareMathOperator{\Spec}{\mathrm{Spec}}
\DeclareMathOperator{\tr}{\mathrm{tr}}
\newcommand{\Hthree}{\mathbf{H}^3}
\DeclareMathOperator{\Br}{\mathrm{Br}}
\DeclareMathOperator{\Gal}{\mathrm{Gal}}
\DeclareMathOperator{\ord}{\mathrm{ord}}
\DeclareMathOperator{\End}{\mathrm{End}}
\DeclareFontFamily{U}{wncy}{}
\DeclareFontShape{U}{wncy}{m}{n}{<->wncyr10}{}
\DeclareSymbolFont{mcy}{U}{wncy}{m}{n}
\DeclareMathSymbol{\Sha}{\mathord}{mcy}{"58} 
\newcommand{\wt}{\widetilde}
\newcommand{\ssm}{\smallsetminus}
\title{Azumaya algebras and canonical components}
\author{Ted Chinburg}
\address{University of Pennsylvania}
\email{ted@math.upenn.edu}
\author{Alan W.\ Reid}
\address{Rice University}
\email{alan.reid@rice.edu}
\author{Matthew Stover}
\address{Temple University}
\email{mstover@temple.edu}
\date{\today}
\begin{document}

\begin{abstract}
Let $M$ be a compact 3-manifold and $\Gamma=\pi_1(M)$. Work of Thurston and Culler--Shalen established the $\SL_2(\C)$ character variety $X(\Gamma)$ as fundamental tool in the study of the geometry and topology of $M$. This is particularly the case when $M$ is the exterior of a hyperbolic knot $K$ in $S^3$. The main goals of this paper are to bring to bear tools from algebraic and arithmetic geometry to understand algebraic and number theoretic properties of the so-called canonical component of $X(\Gamma)$, as well as distinguished points on the canonical component, when $\Gamma$ is a knot group. In particular, we study how the theory of quaternion Azumaya algebras can be used to obtain algebraic and arithmetic information about Dehn surgeries, and perhaps of most interest, to construct new knot invariants that lie in the Brauer groups of curves over number fields.
\end{abstract}

\maketitle

\section{Introduction}\label{sec:Intro}

Let $\Gamma$ be a finitely generated group and let $X(\Gam)$ denote the $\SL_2(\C)$ character variety of $\Gam$ (see \S \ref{sec:CharVars}). When $\Gamma$ is the fundamental group of a compact 3-manifold $M$, seminal work of Thurston and Culler--Shalen established $X(\Gamma)$ as a powerful tool in the study of the geometry and topology of $M$. The aim of this paper is to bring to bear tools from algebraic and arithmetic geometry to understand algebraic and number theoretic properties of certain components of $X(\Gamma)$ for arbitrary finitely generated $\Gamma$, as well as distinguished points on these components. In particular, we study how the theory of quaternion Azumaya algebras (see \S \ref{sec:Azumaya}) can be used to obtain algebraic and arithmetic information about Dehn surgeries on $1$-cusped hyperbolic $3$-manifolds. This leads, in particular, to new knot invariants that lie in the Brauer groups of curves over number fields.

Our approach is two-fold. First, we apply results on Azumaya algebras, classical and recent, to prove results about invariants of Dehn surgeries on hyperbolic knots. Second, we show how conditions from $3$-manifold topology, for example arithmetic properties of the Alexander polynomial of a knot, help prove the existence of an Azumaya algebra over certain curves defined over a number field. In the remainder of the introduction we expand on this theme and state a number of the results we will prove.

Let $\Gamma$ be as above, and suppose that $\rho : \Gam \to \SL_2(\C)$ is an absolutely irreducible representation. If $\chi_\rho$ is the character of $\rho$, let $k_\rho$ be the field generated over $\Q$ by the values of $\chi_\rho$. Then the $k_\rho$-span of $\rho(\Gam)$ defines a $k_\rho$-quaternion subalgebra $A_\rho \subset \M_2(\C)$ (cf.\ \cite[Thm.\ 3.2.1]{MaclachlanReid}). When $\rho(\Gam)$ is a discrete subgroup of $\SL_2(\C)$ of finite co-volume, the field $k_\rho$ and the algebra $A_\rho$ are important geometric and topological invariants of the hyperbolic $3$-manifold $M_\rho = \Hthree / \rho(\Gam)$ that are closely related to the lengths of closed geodesics and the spectrum of the Laplace--Beltrami operator on $M_\rho$ (see \cite{CHLR}).

Suppose that $\Gam = \pi_1(S^3 \ssm K)$ is the fundamental group of a hyperbolic knot complement and $\rho : \Gam \to \SL_2(\C)$ is the discrete representation associated with a hyperbolic Dehn surgery on $S^3 \ssm K$. One can then ask if and how the invariants $k_\rho$ and $A_\rho$ of $M_\rho = \Hthree / \rho(\Gam)$ depend on $K$. Our work was partially motivated by giving a theoretical explanation for the following examples.

\begin{exs}
Using the program Snap \cite{snap}, one can determine the so-called \emph{invariants} of the algebra $A_\rho$, where $\rho : \Gam \to \SL_2(\C)$ is the representation associated with a hyperbolic Dehn surgery on $S^3 \ssm K$ with sufficiently small surgery coefficient. The non-trivial invariants are a finite list of real and finite places of the trace field $k_\rho$, which is also called the \emph{ramification set} for $A_\rho$. For the knot $4_1$, the figure-eight knot, every finite place that appears in the ramification set has residue characteristic $2$, i.e., the associated prime ideal of the ring of integers $\mathcal{O}_{k_\rho}$ of $k_\rho$ divides $2 \mathcal{O}_{k_\rho}$. For other knots, the invariants behave much more wildly: for the knot $5_2$ one sees invariants with residue characteristics including $5$, $13$, and $181$, and for the $(-2,3,7)$-pretzel knot one sees non-trivial invariants with residue characteristics $3$, $5$, $13$, $31$, $149$, $211$, $487$, $563$, and $34543$.
\end{exs}

Understanding the local invariants of the algebras $A_\rho$ turns out to be linked with the problem of extending this association of a quaternion algebra $A_\rho$ to the character with an absolutely irreducible representation to an Azumaya algebra over normalizations of various subschemes of $X(\Gamma)$. To explain this we need some further notation.

In \S \ref{sec:BasicX} we recall a result from \cite{CullerShalen} showing that there is a canonical model $X(\Gamma)_\Q$ of $X(\Gamma)$ whose affine ring is the $\Q$-algebra generated by the trace functions associated with explicit words in the chosen generators for $\Gamma$. This choice of generators then fixes an embedding of $X(\Gamma)$ into an affine space $\mathbb{A}^n_{\mathbb{C}}$ where the indeterminates generating the affine ring of $\mathbb{A}^n_{\mathbb{C}}$ correspond to the trace functions associated with the given words in the fixed generating set.

Suppose $\mathfrak{C}$ is an integral complex curve that is a closed subscheme of $X(\Gamma)$, and suppose there is a point of $\mathfrak{C}(\mathbb{C})$ that corresponds to the character of an irreducible representation. By \cite[Ch.\ 3]{LangIntro} there is a unique minimal subfield $k\subset \mathbb{C}$ such that the ideal $I$ corresponding to the resulting embedding $\mathfrak{C} \subset \mathbb{A}^n_{\mathbb{C}}$ is generated by polynomials with coefficients in $k$. This field $k$ is called the field of definition of $\mathfrak{C}$ in \cite{LongReidDefine}.

Choosing generators for $I$ that have coefficients in $k$, we arrive at a geometrically integral curve $C$ over $k$ that is a closed subscheme of $X(\Gamma)_k = X(\Gamma)_\Q \otimes_\Q k$ such that $C \otimes_k \C$ is isomorphic to $\mathfrak{C}$ over $\mathbb{C}$. Since $C$ is geometrically integral $k$, is the field of constants of $C$, i.e., $k$ is algebraically closed in the function field $k(C)$ \cite[Rem.\ 9.5.7]{Poonen}. Note that this notion of $k$ being a field of definition depends on the realization of $\mathfrak{C}$ as a closed subscheme of $X(\Gamma)$, in contrast with other notions \cite{Debes}. It is shown in \cite[Prop.\ 3.1]{LongReidDefine} that $k$ and the isomorphism type of $C$ over $k$ do not depend on the choice of generators for $\Gamma$ used to define character function generators for the affine ring of $X(\Gam)_{\mathbb{Q}}$.

We now suppose that, in addition to the above, $k \subset \C$ is a number field. This will be the case, for example, if $\mathfrak{C}$ is an irreducible component of $X(\Gamma) = X(\Gamma)_\Q \otimes_\Q \C$. Let $C^\sharp$ be the normalization of $C$ and $\wt{C}$ be the unique smooth projective curve over $k$ containing $C^\sharp$ that has the same function field $k(C)$ as $C$ and $C^\sharp$. There is a field $F$ containing $k(C)$ and an (absolutely) irreducible representation $P_C: \Gam \to \SL_2(F)$ whose character defines the generic point of $C$. This representation is called the \emph{tautological representation} by Culler--Shalen \cite{CullerShalen}. The starting point for us is the following result, which we prove in \S \ref{sec:Harari} using work of Culler--Shalen, Harari \cite{Harari}, and Rumely \cite{Rumely2}, along with some classical results about Azumaya algebras.

\begin{thm}\label{thm:FirstMain}
Suppose that $\Gam$ is a finitely generated group with $\SL_2(\C)$ character variety $X(\Gam) = X(\Gamma)_\Q \otimes_\Q \C$. Let $k\subset \C$ be a number field, and suppose that $C$ is a geometrically integral curve on $X(\Gamma)_k = X(\Gamma)_\Q \otimes_\Q k$ such that $\mathfrak{C} = C \otimes_k \C \subset X(\Gamma)$ has field of definition $k$. Let $C^\sharp$ be the normalization of $C$ and $\wt{C}$ be the smooth projective closure of $C^\sharp$. Suppose that $\mathfrak{C}$ contains the character of an irreducible representation of $\Gam$.
\begin{enumerate}

\item[1.] Taking the $k(C)$-span of $P_C(\Gamma)$ defines a $k(C)$-quaternion algebra $A_{k(C)} \subset \M_2(F)$ for some finite extension $F$ of $k(C)$.

\item[2.] For $z \in C^\sharp$, let $w = w(z)$ be the image of $z$ on $C$ and $\rho_w$ be a complex representation with character $w$. Fix an embedding of the residue field $k(w)$ into $\C$ extending our fixed embedding of $k$. Then $k_{\rho_w} \subseteq k(w) \subseteq k(z)$, and $k(w)$ is generated by $k$ and $k_{\rho_w}$. If $w$ is a smooth point of $C$ then $k(z) = k(w)$.

\item[3.] Suppose there is no Azumaya algebra $A_{\wt{C}}$ over $\wt{C}$ with generic fiber isomorphic to $A_{k(C)}$. Then there is no finite set $S$ of places of $\Q$ with the following property: The $k(w)$-quaternion algebra $A_\rho \otimes_{k_{\rho}} k(w)$ is unramified outside the places of $k(w)$ over $S$ for all but finitely many smooth points $w \in C(\overline{\Q})$ for which $\rho = \rho_{w}$ is absolutely irreducible.

\item[4.] Suppose there is an Azumaya algebra $A_{\wt{C}}$ over $\wt{C}$ with generic fiber isomorphic to $A_{k(C)}$. Then there is a finite set $S$ of places of $\Q$ such that the $k(z)$-quaternion algebra $A_\rho \otimes_{k_{\rho}} k(z)$ is unramified outside the places of $k(z)$ over $S$ for all points $z \in C^\sharp(\overline{\Q})$ such that $\rho = \rho_{w(z)}$ is absolutely irreducible.

\item[5.] If $A_{\wt{C}}$ exists, then its class in the Brauer group $\Br(\wt{C})$ is determined by the isomorphism class of $A_{k(C)}$ as a quaternion algebra over $k(C)$.

\end{enumerate}
\end{thm}

In the remainder of the introduction we discuss the case of most interest to us, namely, where $K \subset S^3$ is a hyperbolic knot with complement $M = S^3 \ssm K$ and $\Gam = \pi_1(M)$. The work of Thurston \cite{Th} (see also \cite{CullerShalen}) shows that $X(\Gam)$ contains a distinguished curve $\mathfrak{C}_M$, a so-called \emph{canonical component}. This is an irreducible component of $X(\Gam)$ containing the character of a discrete and faithful representation associated with the complete hyperbolic structure on $M$. See \S \ref{sec:CharVars} for further discussion. As noted above, since $\mathfrak{C}_M$ is an irreducible component of $X(\Gamma)$, the field of constants of $\mathfrak{C}_M$ is a number field ${k}$, and there is a geometrically integral curve $C_M \subset X(\Gamma)_{k}$ whose base change to $\C$ is $\mathfrak{C}_M$. We will refer to $C_M$ as the canonical component of $X(\Gamma)_{k}$. Let $\wt{C}_M$ denote the normalization of a projective closure of $C_M$, so $\wt{C}_M$ is a smooth projective curve over ${k}$.

Among the results we prove in this paper, we will produce various sufficient conditions for $A_{\wt{C}_M}$ to exist. For example, in the case of hyperbolic knot complements, the existence of these Azumaya algebras is closely related to arithmetic properties of the Alexander polynomial of $K$. Recall that the Alexander polynomial is a generator of the Fitting ideal for the conjugation action of a meridian on the commutator subgroup of the knot group (e.g., see \cite{Neuwirth}). A primary theme of this paper is that the obstruction to $A_{\wt{C}_M}$ existing is related to points on $\wt{C}_M$ associated with characters of nonabelian reducible representations. In particular, it is a classical fact that nonabelian reducible representations of knot groups into $\SL_2(\C)$ are closely related to roots of $\Del_K(t)$; see \S\ref{ssec:Knots} for a precise discussion. We prove the following in \S \ref{sec:Proofs}.

\begin{thm}\label{thm:MainKnot}
Let $K$ be a hyperbolic knot with $\Gam = \pi_1(S^3 \smallsetminus K)$, and suppose that its Alexander polynomial $\Del_K(t)$ satisfies:

\medskip

\noindent $(\star)$ for any root $z$ of $\Del_K(t)$ in an algebraic closure $\overline{\Q}$ of $\Q$ and $w$ a square root of $z$, we have an equality of fields $\Q(w) = \Q(w + w^{-1})$.

\medskip

\noindent
Then $A_{\wt{C}_M}$ exists for the canonical component $C_M \subset X(\Gam)_{k}$.
\end{thm}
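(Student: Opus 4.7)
The plan is to show directly that the generic quaternion algebra $A_{k(C_M)}$ produced by part (1) of Theorem \ref{thm:FirstMain} extends to an Azumaya algebra over the smooth projective curve $\wt{C}_M$. Since $\wt{C}_M$ is regular of dimension one, this reduces to verifying that the class of $A_{k(C_M)}$ in $\Br(k(C_M))$ has trivial residue at the discrete valuation $v_x$ attached to each closed point $x \in \wt{C}_M$. Equivalently, by Theorem \ref{thm:FirstMain}(2), it suffices to exhibit a finite set $S$ of places of $\Q$ outside of which $A_\rho$ is unramified for every absolutely irreducible $\chi_\rho \in C_M$; but the local verification appears more tractable.

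First I would dispose of the closed points $x$ at which the reduction of the tautological representation $P_{C_M}$ modulo $\mathfrak{m}_x$ is absolutely irreducible. At such $x$, the $\mathcal{O}_{\wt{C}_M,x}$-span of $P_{C_M}(\Gam)$ is a free rank-four order in $A_{k(C_M)}$, and reduction modulo $\mathfrak{m}_x$ yields $\M_2$ of the residue field. Thus $A_{k(C_M)}$ is already locally Azumaya at $x$, and the extension problem reduces to (i) the finitely many closed points at which the reduction becomes reducible, together with (ii) the finitely many ideal points of $\wt{C}_M$.

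The main input for (i) is the classical description (going back to de Rham and Burde, and refined by Heusener--Porti) of how $C_M$ meets the line of reducible characters in $X(\Gam)$: a reducible character on $C_M$ is conjugate over $\overline{\Q}$ to one of a representation $\mu \mapsto \mathrm{diag}(w, w^{-1})$, where $\mu$ is the meridian and $z = w^2$ is a root of $\Del_K(t)$. At the corresponding closed point $x$, the residue field is the trace field $\Q(w + w^{-1})$. The envisaged computation is to choose a uniformizer $\pi$ at $x$, expand
\[
P_{C_M}(\gam) = \rho_0(\gam) + \pi\, N(\gam) + O(\pi^2)
\]
about the diagonal reducible $\rho_0$, and write down an explicit rank-four $\mathcal{O}_{\wt{C}_M,x}$-model of $A_{k(C_M)}$. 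The structure constants involve $w$ and $w^{-1}$, and producing a local model with $\M_2$ reduction at the residue field forces $w \in \Q(w + w^{-1})$; this is exactly $(\star)$. Under $(\star)$, the residue of $A_{k(C_M)}$ at $v_x$ is trivial. The ideal points in (ii) must then be treated by a separate Culler--Shalen analysis that does not use $(\star)$: here the key knot-theoretic fact is that the longitude of $K$ lies in the commutator subgroup of $\Gam$, which together with control on the trace of the meridian at ideal points should force triviality of the residue.

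The principal obstacle is the explicit local computation in case (i): constructing a rank-four $\mathcal{O}_{\wt{C}_M,x}$-order that reduces modulo $\mathfrak{m}_x$ to $\M_2(\Q(w+w^{-1}))$, using $(\star)$ to render $w$ a $\Q(w+w^{-1})$-rational quantity so that diagonalization descends to the residue field. A secondary obstacle is the uniform treatment of the ideal points, since these fall outside the scope of Theorem \ref{thm:FirstMain}(2) but must be controlled for the Azumaya algebra to extend globally over $\wt{C}_M$.
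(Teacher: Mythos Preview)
Your three-way decomposition into irreducible characters, reducible characters, and ideal points is exactly the paper's strategy (Proposition \ref{prop:IdealExtension} plus Lemma \ref{lem:reducible}), and your treatment of the irreducible locus matches the paper's. The differences are in execution at the other two loci.

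At reducible characters, the paper bypasses any first-order expansion or explicit order construction and instead computes the tame symbol directly from a Hilbert symbol built from \emph{meridians}. With $a=I_\mu^2-4$ and $b=I_{[\mu,\nu]}-2$, Corollary \ref{nounipotent} forces $\ord_P(a)=0$ while $\ord_P(b)>0$, so the residue at $P$ is the class of $a(P)$ in $k(P)^*/(k(P)^*)^2$. Now $a(P)=(w+w^{-1})^2-4=(w-w^{-1})^2$, which is a square in $k(P)=\Q(w+w^{-1})$ precisely when $w\in\Q(w+w^{-1})$, i.e.\ exactly $(\star)$. This is a two-line computation once the Hilbert symbol is chosen; your proposed local model would also work but is considerably heavier and you still have to show its reduction is $\M_2$, which in the end amounts to the same square-class check.

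At ideal points your sketch has a genuine gap. The longitude lying in $[\Gam,\Gam]$ is what the paper uses to show there are only finitely many \emph{reducible} points on $C_M$ (Lemma \ref{finitelymanyreducible}); it plays no role at ideal points. Moreover you cannot rely on controlling $I_\mu$ at an ideal point: the paper explicitly notes that the element $g$ with $I_g\notin\mathcal{O}_P$ ``need not be a meridian in this case, nor even a peripheral element.'' The actual mechanism is purely algebraic: choose any $g$ with $\ord_P(I_g)<0$, replace $f_g=I_g^2-4$ in the Hilbert symbol by $\alpha_g=f_g/I_g^2=1-4/I_g^2$, and observe that $\alpha_g$ is a unit at $P$ with image $1$ in $k(P)$. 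The tame symbol $\{\alpha_g,\beta\}_P$ then reduces to a power of $1$ and is trivial. Without this modification-to-a-unit-congruent-to-$1$ trick (or an equivalent), your ideal-point argument is incomplete.
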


\begin{rem}
While we only state Theorem \ref{thm:MainKnot} for the canonical component, our techniques can apply to give Azumaya algebras over other irreducible curve components of the $\SL_2(\C)$ character variety. Indeed, many of the facts about the canonical component used in the proof apply to other components, like the so-called \emph{norm curves} that appear in Boyer and Zhang's proof of the finite filling conjecture \cite{Boyer--Zhang}. As our primary applications of Theorem \ref{thm:MainKnot} are to points on the canonical component, we leave it to the motivated reader to make the necessary adjustments for producing Azumaya algebras over other components.
\end{rem}

Theorem \ref{thm:MainKnot} obviously applies to any knot with trivial Alexander polynomial, and moreover to infinitely many other hyperbolic knots with non-trivial Alexander polynomial, including the figure-eight knot. Indeed, we are able to construct infinite families of both fibered and non-fibered hyperbolic knots for which condition $(\star)$ holds, and infinitely many for which it fails. See \S \ref{sec:Examples} for further discussion.

Furthermore, we prove a partial converse to Theorem \ref{thm:MainKnot}. For example, if $\mathfrak{C}_M$ is the unique component of $X(\Gam)$ containing the character of an irreducible representation, $\Delta_K(t)$ has no multiple roots, and $A_{\wt{C}_M}$ exists, then condition $(\star)$ of Theorem \ref{thm:MainKnot} holds for every root of $\Del_K(t)$. See \S \ref{sec:HeusenerPorti} for a precise discussion. The converse may also hold when $\Delta_K(t)$ has a multiple root, e.g., when the associated point on the character variety is a smooth point. In order to remove the multiple root condition and understand if and when the full converse holds, one must better understand the nature of singularities of $C$ arising from roots of high multiplicity.

A particularly interesting infinite family of points on $C_M$ are those that arise from performing hyperbolic Dehn surgery on $M$. Suppose that $N$ is a closed hyperbolic $3$-manifold obtained from Dehn surgery on $M$ and let $\chi_N$ be the character of the representation of $\Gam = \pi_1(M)$ obtained by composition of the Dehn surgery homomorphism and the faithful discrete representation of $\pi_1(N)$. If $k_N$ is the trace field of $N$, it is a well-known consequence of local rigidity that $k_N$ is a number field. As mentioned briefly above, there is a $k_N$-quaternion algebra $A_N$ associated with this point on $C_M$.

When Theorem \ref{thm:MainKnot} applies to show that we obtain an Azumaya algebra over $\wt{C}_M$, Theorem \ref{thm:FirstMain} then places considerable restrictions on the invariants of the algebras $A_N$. For the sake of simplicity, we state our results here for the case where $C_M$ has field of constants ${k} = \Q$. In fact, we do not know an example of a knot for which $C_M$ has field of constants not equal to $\Q$, and we will prove in Lemma \ref{lem:UniqueToQ} that ${k} = \Q$ when $X(\Gam)_\C$ has a unique component containing the character of an irreducible representation.

\begin{thm}\label{thm:MainDehn}
Let $K$ be a hyperbolic knot in $S^3$ whose Alexander polynomial satisfies condition $(\star)$ of Theorem \ref{thm:MainKnot}, and suppose that the canonical component $C_M$ has field of constants $\Q$. Then there exists a finite set $S_K$ of rational primes such that for any hyperbolic Dehn surgery $N$ on $K$ with trace field $k_N$, the $k_N$-quaternion algebra $A_N$ can only ramify at real places of $k_N$ and finite places of $k_N$ lying over primes in $S_K$.
\end{thm}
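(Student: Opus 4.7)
The plan is to deduce this from Theorem \ref{thm:MainKnot} and part (2) of Theorem \ref{thm:FirstMain}, applied pointwise at Dehn surgery characters on the canonical component. First, since $\Del_K(t)$ satisfies $(\star)$, Theorem \ref{thm:MainKnot} produces the Azumaya algebra $A_{\wt{C}_M}$ over the smooth projective model $\wt{C}_M$ of $C_M$. The ``only if'' direction of Theorem \ref{thm:FirstMain}(2) then furnishes a finite set $S$ of places of $\Q$ such that for every point $z \in C_M$ that is the character of an absolutely irreducible representation $\rho$, the $k_\rho$-quaternion algebra $A_\rho$ is unramified outside the places of $k_\rho$ lying above $S$. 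I would then define $S_K$ to be the subset of finite rational primes in $S$.

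Next I would verify that the hypothesis of Theorem \ref{thm:FirstMain}(2) applies at each relevant Dehn surgery character. If $N$ is a hyperbolic Dehn surgery on $K$ whose character lies on $C_M$, then $N$ is a closed orientable hyperbolic $3$-manifold, its discrete faithful $\PSL_2(\C)$-representation lifts to $\SL_2(\C)$, and composing with the surgery epimorphism $\pi_1(M) \twoheadrightarrow \pi_1(N)$ yields an absolutely irreducible representation $\rho_N : \Gam \to \SL_2(\C)$ with character $\chi_N$ and associated quaternion algebra $A_N$. The conclusion of the previous step then forces $A_N$ to be unramified at every finite place of $k_N$ not lying over a prime in $S_K$. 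Finally, since any quaternion algebra over $\C$ is split, ramification of $A_N$ at an archimedean place of $k_N$ can occur only at a real place, which is permitted by the statement.

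The substantive work has already been absorbed into Theorems \ref{thm:FirstMain} and \ref{thm:MainKnot}, so the proof itself is essentially an unwinding of their consequences; I do not expect any serious technical obstacle in the extraction. The one delicate issue, carried over implicitly from the paragraph preceding the theorem, is the assumption that $\chi_N$ in fact lies on $C_M$. This is automatic for all sufficiently large surgery slopes by Thurston's hyperbolic Dehn surgery theorem, but for small or exceptional hyperbolic fillings it may be a hypothesis that must be verified separately. Granting this, the uniformity of $S_K$ across \emph{all} such Dehn surgeries is exactly the global content of $A_{\wt{C}_M}$ extending across every closed point of $\wt{C}_M$.
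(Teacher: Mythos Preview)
Your proposal is correct and is exactly how the paper deduces Theorem \ref{thm:MainDehn}: combine Theorem \ref{thm:MainKnot} (to obtain $A_{\wt{C}_M}$) with the ``only if'' direction of Theorem \ref{thm:FirstMain}(2) (to get the finite set $S$), and then observe that a hyperbolic Dehn surgery gives an absolutely irreducible $\SL_2(\C)$-representation of $\Gam$. Your caveat about needing $\chi_N \in C_M$ is also precisely the standing hypothesis made in the paragraph preceding the theorem.
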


We now study effective upper bounds on the set $S_K$ in Theorem \ref{thm:MainDehn} using an integral version of Theorem \ref{thm:MainKnot}. Recall that by \cite{Lichtenbaum} (see also \cite{ChinburgMinimal} and \cite{Liu}), there is a regular projective model of $\wt{C}_M$ over the ring of integers $O_{k}$ of ${k}$. Such a model need not be unique, but there are relatively minimal models. If $\wt{C}_M$ has positive genus, then all relatively minimal models are isomorphic. One can take the base change of these models to Dedekind subrings of ${k}$ that contain $O_{k}$, for example rings of $S$-integers $O_{{k}, S}$, in order to arrive at regular projective models over these subrings.

\begin{thm}\label{thm:MainKnotIntegral}
Suppose that $A_{\wt{C}_M}$ exists for the canonical component $C_M \subset X(\Gam)_{k}$, and let $S$ be a finite set of rational primes for which the following is true:
\medskip

\noindent $(\star_\ell)$ Let $\ell$ be a prime not in $S$. Suppose that $z$ is a root of $\Del_K(t)$ in an algebraic closure $\overline{\F}_\ell$ of $\F_\ell$. If $w$ is a square root of $z$, then we have an equality of fields $\F_\ell(w) = \F_\ell(w + w^{-1})$.

\medskip

\noindent For any such $S$, let $O_{{k},S}$ be the ring of $S$-integers of ${k}$ and let $\mathcal{C}_S$ be a regular projective integral model of $\wt{C}_M$ over $O_{{k},S}$. There is an extension $A_{\mathcal{C}_{S}}$ of $A_{k(C)}$ to an Azumaya algebra over $\mathcal{C}_{S}$, and the class of $A_{\mathcal{C}_{S}}$ in $\Br(\mathcal{C}_S)$ is determined by the isomorphism class of $A_{k(C)}$.
\end{thm}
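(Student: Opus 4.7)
The plan is to first extend $A_{\wt{C}_M}$ from the generic fiber $\wt{C}_M$ to all of the regular integral model $\mathcal{C}_S$, and then to pull this extension back along the $O_{k_N,T}$-point of $\mathcal{C}_S$ determined by a hyperbolic Dehn surgery. Since $\mathcal{C}_S$ is regular of dimension at most $2$, Auslander--Goldman purity gives that $\Br(\mathcal{C}_S) \hookrightarrow \Br(k(C))$ and that a Brauer class on $k(C)$ extends to $\mathcal{C}_S$ precisely when it extends at every codimension-one point. The horizontal codimension-one points of $\mathcal{C}_S$ are the closed points of $\wt{C}_M$, where extension is immediate from the hypothesis that $A_{\wt{C}_M}$ exists. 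Thus one only needs to handle extension at the generic points of the irreducible components of each special fiber $\mathcal{C}_S \otimes k(v)$, i.e., for closed points $v$ of $\Spec(O_{L,S})$ of residue characteristic $\ell \notin S$.

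For each such $v$, the plan is to reprove Theorem \ref{thm:MainKnot} with $\Q$ replaced by $\F_\ell$. The proof of Theorem \ref{thm:MainKnot} in \S\ref{sec:Proofs} locates the obstruction at characters where the tautological representation becomes reducible. By classical work of Burde and de Rham, such characters correspond to roots $z$ of $\Del_K(t)$, with reducible image conjugate to $\mathrm{diag}(w, w^{-1})$ for $w$ a square root of $z$; the two fields governing the local behavior of $A_{k(C)}$ are $\Q(w + w^{-1})$, where the trace character lives, and $\Q(w)$, where the reducible representation is actually diagonalized. Condition $(\star)$ forces these to agree, which is exactly the input that kills the residue of $A_{k(C)}$ along the corresponding divisor of $\wt{C}_M$. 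Reducing modulo $\ell$, the vertical divisors of $\mathcal{C}_S$ supported on $\mathcal{C}_S \otimes k(v)$ acquire reducible points controlled by roots of $\Del_K(t) \bmod \ell$, and the analogous pair of fields is $\F_\ell(w + w^{-1})$ and $\F_\ell(w)$. Condition $(\star_\ell)$ is therefore precisely what is needed to annihilate the residue of $A_{k(C)}$ at each generic point of a component of $\mathcal{C}_S \otimes k(v)$. Purity then assembles these into a global Azumaya algebra $A_{\mathcal{C}_S}$, and injectivity of $\Br(\mathcal{C}_S) \hookrightarrow \Br(k(C))$ pins down its Brauer class in terms of $A_{k(C)}$.

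For the Dehn surgery consequence, fix a hyperbolic Dehn surgery $N$ on $K$ with character $\chi_N \in C_M$ and trace field $k_N$, and let $T$ denote the set of finite places of $k_N$ lying above $S$. The point $\chi_N$ is a closed point of $\wt{C}_M$ with residue field $k_N$, determining a morphism $\Spec(k_N) \to \wt{C}_M \subset \mathcal{C}_S$. At each finite place of $k_N$ outside $T$, the valuative criterion of properness applied to $\mathcal{C}_S \to \Spec(O_{L,S})$ extends this map across the corresponding discrete valuation ring, and gluing yields a morphism $\Spec(O_{k_N, T}) \to \mathcal{C}_S$. Pulling back $A_{\mathcal{C}_S}$ produces an Azumaya algebra over $O_{k_N, T}$ whose generic fiber is the $k_N$-quaternion algebra $A_N$, because the specialization of the tautological representation at $\chi_N$ is conjugate over an algebraic closure of $k_N$ to $\rho_N$. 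Since every Azumaya algebra over $O_{k_N, T}$ is automatically unramified at each finite place of $k_N$ outside $T$, the stated restriction on the ramification of $A_N$ follows.

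The main obstacle is the second step, carrying the proof of Theorem \ref{thm:MainKnot} across to the special fiber at each $v$. One has to control the geometry of $\mathcal{C}_S \otimes k(v)$, which may be reducible and may acquire singularities coming from multiple roots of $\Del_K(t) \bmod \ell$ that were absent in characteristic zero, and one must verify that the field identity supplied by $(\star_\ell)$ still trivializes the residue of $A_{k(C)}$ along each vertical divisor even when the local picture degenerates. Checking this in these characteristic-$\ell$ degenerations, while ensuring compatibility with the particular choice of regular model $\mathcal{C}_S$, is where I expect the technically delicate work to lie.
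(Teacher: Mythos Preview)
Your overall architecture is correct: Auslander--Goldman purity on the regular two-dimensional scheme $\mathcal{C}_S$ reduces the extension problem to codimension-one points, the horizontal ones are handled by hypothesis, and the Dehn surgery consequence follows by the valuative criterion and pullback exactly as you describe. The uniqueness of the Brauer class is also handled correctly via injectivity of $\Br(\mathcal{C}_S) \to \Br(k(C))$.

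The genuine gap is your proposed method for the vertical codimension-one points. You propose to ``reprove Theorem \ref{thm:MainKnot} with $\Q$ replaced by $\F_\ell$,'' meaning to rerun the tame-symbol/residue argument of \S\ref{sec:Proofs} over the residue field $k(P)$. This cannot work when $\ell = 2$: the tame symbol criterion (Theorem \ref{thm:encyclo}(8)) requires the residue field to have characteristic different from $2$, and the identification of the obstruction with a class in $k(P)^*/(k(P)^*)^2$ breaks down entirely in characteristic $2$. Since the theorem allows $S$ to be empty (for instance when $\Delta_K(t)=1$, which is precisely the case needed for Theorem \ref{non-trivial} and Corollary \ref{cor:trivialalex}), you must handle fibers of residue characteristic $2$, and your method has no traction there. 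The paper makes this point explicitly in Remark \ref{rem:cebot}(1).

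The paper's route is genuinely different and bypasses residues altogether. At a codimension-one point $P$ where $A_{k(C)}$ fails to extend, one passes to the completion, where the algebra is a division ring with a unique maximal order $D$ and maximal two-sided ideal $J$ satisfying $[D/J:k(P)]=2$ (Proposition \ref{prop:structure}). One then studies the filtration of $\rho(\Gamma)$ by the congruence subgroups $1+J^s$; the image in $D/J$ is abelian, and at the first stage $s$ where the image in $D/J^{s+1}$ becomes nonabelian, the conjugation action of the meridian on $J^s/J^{s+1}$ is multiplication by $\tilde{z}^2$, where $\tilde{z}$ generates $D/J$ over $k(P)$. This action factors through $\Gamma'/[\Gamma',\Gamma']$, and comparing with the Fitting ideal description of $\Delta_K(t)$ forces $\tilde{z}^2$ to be a root of $\Delta_K(t)$ in $\overline{k(P)}$ (Theorem \ref{thm:filter}). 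Since $\tilde{z}$ has norm $1$ to $k(P)$ and generates a separable quadratic extension, one deduces $\F_\ell(\tilde z)\neq\F_\ell(\tilde z+\tilde z^{-1})$, contradicting $(\star_\ell)$. This argument is insensitive to the residue characteristic, which is what makes it work where yours does not. Your picture of ``reducible points on the special fiber controlled by roots of $\Delta_K(t)\bmod\ell$'' is a reasonable heuristic, but the actual link to the Alexander polynomial runs through the Fitting ideal and the conjugation action on the order filtration, not through an analogue of Proposition \ref{CCGLS} on the special fiber.
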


One can check (see Remark \ref{rem:cebot}) that if condition $(\star)$ of Theorem \ref{thm:MainKnot} holds, there will always be a finite set of primes $S$ as in Theorem \ref{thm:MainKnotIntegral}. In short, Theorem \ref{thm:MainKnotIntegral} says that \emph{ramification of the quaternion algebras associated with hyperbolic Dehn surgeries on $K$ are governed by the arithmetic of the Alexander polynomial of $K$}. A particular corollary of Theorems \ref{thm:MainDehn} and \ref{thm:MainKnotIntegral} is the following, noting that $(\star_\ell)$ holds for all primes $\ell$ when $\Del_K(t)=1$.

\begin{cor}\label{cor:trivialalex}
Let $K\subset S^3$ be a hyperbolic knot with canonical component $C_M$ with field of constants ${k} = \Q$.
\begin{enumerate}

\item Suppose that $A_{\wt{C}_M}$ exists for $\wt{C}_M$. For any hyperbolic Dehn surgery $N$ on $K$ with trace field $k_N$, the $k_N$-quaternion algebra $A_N$ can only ramify at real places of $k_N$ and finite places lying over primes in the set $S$ provided by Theorem \ref{thm:MainKnotIntegral}.

\item Suppose that $\Del_K(t)=1$, and let $N$ be a closed hyperbolic $3$-manifold obtained by Dehn surgery on $K$. Then the quaternion algebra $A_N$ can only ramify at real places of the trace field $k_N$.

\end{enumerate}
\end{cor}

As an example of Theorem \ref{thm:MainKnotIntegral}, we show in \S \ref{sec:Examples} that for the figure-eight knot complement, where $\wt{C}_M$ is known to be an elliptic curve over ${k} = \mathbb{Q}$ with good reduction outside $2$ and $5$, that the set $S$ in Theorem \ref{thm:MainKnotIntegral} can be taken to be $\{2\}$. This leads to the following consequence, which confirms the experimental observations described above.

\begin{thm}\label{thm:Fig8}\ 
Let $K\subset S^3$ denote the figure-eight knot, $M=S^3 \smallsetminus K$, and
$C_M$ be the canonical component.
\begin{enumerate}
\item The quaternion algebra $A_{k(C)}$ extends to a quaternion Azumaya algebra $A_{\wt{C}_M}$.

\item The class of $A_{\wt{C}_M}$ in $\Br(\wt{C}_M)$ is the unique non-trivial class of order $2$ having trivial specialization at infinity that becomes trivial in $\Br(\wt{C}_M \otimes_{\mathbb{Q}} \mathbb{Q}(i))$ after tensoring over $\Q$ with $\Q(i)$.

\item Suppose that $N$ is a closed hyperbolic $3$-manifold obtained by Dehn surgery on the figure-eight knot. Then the quaternion algebra $A_N$ can only ramify at real or dyadic places of the trace field $k_N$.

\end{enumerate}
\end{thm}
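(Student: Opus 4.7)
The plan is to deduce parts (1) and (3) from Theorems \ref{thm:MainKnot} and \ref{thm:MainKnotIntegral} applied directly to the figure-eight Alexander polynomial, and to handle part (2) via an explicit Brauer-group analysis of the elliptic curve $\wt{C}_M$.

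For part (1) and part (3), the input is $\Del_K(t)=t^2-3t+1$, with roots $z = (3\pm\sqrt{5})/2$. The identity $(3\pm\sqrt{5})/2 = \left((1\pm\sqrt{5})/2\right)^2$ shows that the square roots $w$ lie in $\Q(\sqrt 5)$, and direct computation gives $w+w^{-1}=\pm\sqrt 5$, so $\Q(w)=\Q(w+w^{-1})=\Q(\sqrt 5)$. Thus $(\star)$ holds and Theorem \ref{thm:MainKnot} yields part (1). Reducing the same identity modulo each odd prime $\ell$ gives $(\star_\ell)$: for $\ell\ne 5$ both $w$ and $w+w^{-1}$ generate $\F_\ell(\sqrt 5)$ over $\F_\ell$, and for $\ell=5$ one has the double root $z=-1$ with $w=\pm 2\in\F_5$ and $w+w^{-1}=0\in\F_5$. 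By contrast, modulo $2$ one finds $\Del_K\equiv t^2+t+1$, irreducible with roots $z\in\F_4$; the unique square root is $w=z^2$, yielding $w+w^{-1}=z^2+z=1\in\F_2$ while $w\in\F_4\ssm\F_2$, so $(\star_2)$ fails. Hence $(\star_\ell)$ holds precisely for $\ell\ne 2$, and Theorem \ref{thm:MainKnotIntegral} applied with $S=\{2\}$ gives part (3) (and, together with part (1), confirms that a set $S$ as in that theorem exists).

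For part (2), I would proceed in two stages. First, establish that the group of classes in $\Br(\wt C_M)[2]$ that are trivial at $\infty$ and trivial over $\Q(i)$ has order $2$. Because $\wt C_M$ is an elliptic curve over $\Q$ with rational point $\infty$, the Leray spectral sequence for $\wt C_M\to\Spec\Q$, combined with the splitting provided by $\infty$, yields a decomposition $\Br(\wt C_M) \cong \Br(\Q)\oplus\Br(\wt C_M)_0$, with $\Br(\wt C_M)_0$ the subgroup of classes specializing trivially at $\infty$. Trivialization after $\otimes_\Q\Q(i)$ then identifies the desired group with $\ker\bigl(\Br(\wt C_M)_0[2] \to \Br(\wt C_M\otimes\Q(i))_0[2]\bigr)$, which by inflation--restriction is controlled by a piece of $H^1(\Gal(\Q(i)/\Q), \mathrm{Pic}^0(\wt C_M\otimes\Q(i)))$; good reduction of $\wt C_M$ outside $\{2,5\}$ together with an explicit $2$-descent on the specific elliptic curve should reduce the count to a finite calculation showing the kernel is of order $2$. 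Second, verify that $[A_{\wt C_M}]$ has the three required properties: nontriviality follows from the examples in the introduction, since the quaternion algebras associated with some hyperbolic Dehn surgeries on $4_1$ ramify at dyadic places and hence cannot all come from a split Azumaya extension; triviality at $\infty$ follows from identifying $\infty$ with an ideal point of the Culler--Shalen compactification at which the tautological representation becomes reducible, so that $A_{k(C)}$ has split specialization; and triviality over $\Q(i)$ follows from the Hilbert-symbol presentation of $A_{k(C)}$ coming out of the proof of $(\star)$, whose entries become squares after adjoining $\sqrt{-1}$ to the base.

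The main obstacle is the uniqueness claim in part (2): while the spectral-sequence and inflation--restriction framework is formal, showing that the kernel described above has order exactly $2$ requires explicit input from the equation of the figure-eight canonical component (a Weierstrass model and its $\Q(i)$-points), together with a careful accounting of local invariants at $2$ and $5$. The next-hardest step is identifying $\infty$ with a specific reducible ideal point of the Culler--Shalen compactification of $X(\Gam)$, and then confirming that the specialization of $A_{k(C)}$ there is split rather than merely defined over $\Q$.
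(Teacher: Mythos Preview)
Your treatment of parts (1) and (3) matches the paper exactly: verify $(\star)$ and $(\star_\ell)$ directly for $\Del_K(t)=t^2-3t+1$, find that only $\ell=2$ fails, and apply Theorems \ref{thm:MainKnot} and \ref{thm:MainKnotIntegral} with $S=\{2\}$.

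For part (2), your overall framework is also the paper's: identify $\Br^0(E)\cong H^1(\Q,E(\overline\Q))$ via the section at $\infty$, then use inflation--restriction to land in $H^1(\Gal(\Q(i)/\Q),E(\Q(i)))$ and show this group has order $2$. However, several of your specific verifications are wrong or unsubstantiated. Your argument for trivial specialization at $\infty$ is incorrect: at an ideal point the trace functions have \emph{poles}, and the tautological representation does not ``become reducible'' there; moreover, the extendability of $A_{k(C)}$ over an ideal point established in Proposition \ref{prop:IdealExtension} does not by itself imply that the specialization is split. The paper instead renormalizes the Hilbert symbol by explicit squares so that both entries specialize to $1$ at $p_\infty$. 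Your argument for splitting over $\Q(i)$ --- that the Hilbert-symbol entries ``become squares after adjoining $\sqrt{-1}$'' --- is not evidently true for $(T^2-4,\,R-3)$ and is not what $(\star)$ gives you; the paper instead exhibits an explicit element of reduced norm $0$ in $A_{k(E)}\otimes_{k(E)} k(E)(i)$. For nontriviality, your appeal to Dehn-surgery data is logically sound but indirect; the paper simply specializes at $(y,z)=(\pm 1,0)$ to obtain $(-3,-2)_\Q$, ramified at the real place. Finally, the uniqueness step is not handled by ``good reduction outside $\{2,5\}$ and $2$-descent'': the paper shows $E(\Q(i))$ is \emph{finite} by checking that both $E$ and its $(-1)$-quadratic twist have rank $0$ over $\Q$ (conductors $40$ and $80$), then uses that $E(\Q(i))[2]$ has order $2$ to conclude $H^1(\Gal(\Q(i)/\Q),E(\Q(i)))$ is cyclic of order at most $2$.
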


Another direction of interest is how our results are related to the existence of characters of $\SU(2)$ representations. The study of $\SU(2)$ representations of knot groups saw a great deal of activity motivated by an approach to Property P via the Casson invariant (see \cite{AM} and \cite{KM1}). In particular, if $\Sigma$ is an integral homology $3$-sphere that is obtained by Dehn surgery on a knot $K$ whose symmetrized Alexander polynomial $\Delta_K(t)$ satisfies $\Delta_K''(1)\neq 0$, then $\pi_1(\Sigma)$ admits a non-trivial homomorphism to $\SU(2)$, and so $K$ satisfies Property P. It was subsequently shown in \cite{KM2} that every non-trivial knot admits a curve of characters of irreducible $\SU(2)$-representations. Our work connects to this as follows (see \S \ref{sec:IntegralModel} for more detail).

\begin{thm}\label{non-trivial}
Suppose $K$ is a hyperbolic knot with $\Delta_K(t)=1$. Let $C\subset X(\Gamma)_k$ be the canonical component and $\wt{C}$ be the smooth projective model of $C$, where $k$ is the {constant field} of the function field of $C$. Then $A_{\wt{C}}$ extends to an Azumaya algebra $\mathcal{A}$ over every regular integral model $\mathcal{C}$ of $\wt{C}$ over the full ring of integers $O_{k}$ of ${k}$. Let $C_{\SU(2)}$ be the subset of the real points $C(\mathbb{R})$ of $C$ corresponding to the characters of $\SU(2)$ representations and $C_{sing}$ be the (finite) singular locus of $C$.

\begin{enumerate}

\item[1.] The class $[\mathcal{A}]$ of $\mathcal{A}$ in $\Br(\mathcal{C})$ has an associated class $\beta([\mathcal{A}])$ in the relative Tate--Shafarevich group $\Sha({k},O_{{k}},\mathrm{Pic}^0(\wt{C}))$ defined by Stuhler in \cite[Def. 1, p. 149]{Stuhler} (see \S \ref{TSgroups}).

\item[2.] The following conditions are equivalent:
\begin{enumerate}
\item[i.] The class $\beta([\mathcal{A}])$ lies in the traditional Tate--Shafarevich group $\Sha({k},\mathrm{Pic}^0(\wt{C}))$ defined in Equation \eqref{eq:traditional} of \S \ref{TSgroups};
\item[ii.] $C_{\SU(2)}$ is contained in the finite set $C_{sing}$.
\end{enumerate}
If any (hence both) of these conditions fail, then $\wt{C}(\mathbb{R})$ is a finite non-empty disjoint union of real circles and $C_{\SU(2)} \ssm C_{sing}$ is a non-empty union of arcs and circles in $\wt{C}(\mathbb{R})$.

\item[3.] If either (i) or (ii) of part 2 above hold and there is a point of $\wt{C}$ over ${k}$, then $\beta([\mathcal{A}]) = 0$ in $\Sha({k},\mathrm{Pic}^0(\wt{C}))$ if and only if the class of $\mathcal{A}$ in $\Br(\mathcal{C})$ is trivial. In particular, this holds when $\wt{C}$ has no real points or when $\wt{C}(\R)$ consists only of real circles associated with $\SL_2(\R)$ representations.

\end{enumerate}
\end{thm}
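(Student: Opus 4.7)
\emph{Part 1.} Since $\Del_K(t) = 1$ has no roots, both condition $(\star)$ of Theorem~\ref{thm:MainKnot} and condition $(\star_\ell)$ of Theorem~\ref{thm:MainKnotIntegral} hold vacuously for every prime $\ell$. Applying Theorem~\ref{thm:MainKnotIntegral} with $S = \emptyset$ produces an Azumaya extension $\mathcal{A}_0$ of $A_{k(C)}$ over a relatively minimal regular projective model $\mathcal{C}_0$ of $\wt{C}$ over $O_L$. Any other regular projective integral model $\mathcal{C}$ is obtained from $\mathcal{C}_0$ by a sequence of blow-ups at closed points (Lichtenbaum--Castelnuovo), and by purity of the Brauer group for regular two-dimensional schemes $\mathcal{A}_0$ pulls back uniquely to an Azumaya algebra $\mathcal{A}$ on $\mathcal{C}$. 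The class $\beta([\mathcal{A}]) \in \Sha(L, O_L, \mathrm{Pic}^0(\wt{C}))$ is then attached to $[\mathcal{A}] \in Br(\mathcal{C})$ via Stuhler's construction, whose details are recalled in \S\ref{TSgroups}.

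\emph{Part 2.} For (iii) $\Rightarrow$ (ii), the normalization $\wt{C} \to C$ is bijective over $C \ssm C_{sing}$, so any real point of $C \ssm C_{sing}$ lifts to $\wt{C}(\R)$; hence $\wt{C}(\R) = \emptyset$ forces $C_{\SU(2)} \subset C(\R) \subset C_{sing}$. For (ii) $\Rightarrow$ (iii), I argue the contrapositive: if $\wt{C}(\R) \neq \emptyset$, it is a finite disjoint union of topological circles, and the specialization of $\mathcal{A}$ at a real point -- a quaternion algebra over $\R$ that is either $\M_2(\R)$ or Hamilton's quaternions -- is locally constant on $\wt{C}(\R)$, hence constant on each circle; at least one circle must consist of Hamiltonian specializations, yielding arcs of $\SU(2)$-characters meeting the complement of the finite set $C_{sing}$. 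For (i) $\Leftrightarrow$ (iii), I extract from Stuhler's definition that the quotient $\Sha(L, O_L, \mathrm{Pic}^0)/\Sha(L, \mathrm{Pic}^0)$ records the archimedean local invariants of $[\mathcal{A}]$, which vanish automatically when $\wt{C}(\R) = \emptyset$ and are detected by the presence of Hamiltonian specialization circles otherwise.

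\emph{Part 3.} A point $P \in \wt{C}(L)$ provides a retraction $\mathrm{ev}_P \colon Br(\wt{C}) \to Br(L)$ of the structure pullback, hence a splitting $Br(\wt{C}) \cong Br(L) \oplus \ker(\mathrm{ev}_P)$. The standard exact sequence $0 \to Br(L) \to Br(\wt{C}) \to H^1(L, \mathrm{Pic}^0(\wt{C})) \to 0$ (split via $P$) identifies the $\ker(\mathrm{ev}_P)$-summand with the image of $Br(\wt{C})$ in $H^1(L, \mathrm{Pic}^0)$. Under the hypothesis that $\beta([\mathcal{A}]) \in \Sha(L, \mathrm{Pic}^0)$, vanishing of $\beta$ forces vanishing of the $\ker(\mathrm{ev}_P)$-component of $[\mathcal{A}]$, and a final verification that the $Br(L)$-component is trivial -- which follows from the tautological nature of $\mathcal{A}$ specialized at an $L$-point such as the cusp of $M$ -- yields the equivalence.

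\emph{Main obstacle.} I expect the hardest step to be (i) $\Leftrightarrow$ (iii) in Part 2, which demands unpacking Stuhler's definition carefully enough to identify the archimedean quotient $\Sha(L, O_L, \mathrm{Pic}^0)/\Sha(L, \mathrm{Pic}^0)$ with both the topological data of $\wt{C}(\R)$ and the real specialization behavior of $\mathcal{A}$. A secondary technical point in (ii) $\Rightarrow$ (iii) is ensuring that the produced $\SU(2)$-arcs lie on the canonical component $C_M$ rather than on some other component of $X(\Gam)$.
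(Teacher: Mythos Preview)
Your Part 1 is fine (though the detour through minimal models and blow-ups is unnecessary: Theorem~\ref{thm:MainKnotIntegral} applies directly to \emph{any} regular projective model $\mathcal{C}$, not just minimal ones).

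The real problem is Part 2. In your (ii) $\Rightarrow$ (iii) argument you assert that if $\wt{C}(\R)\neq\emptyset$ then ``at least one circle must consist of Hamiltonian specializations.'' This is precisely the heart of the matter, and you give no justification for it. A priori a real circle on $\wt{C}$ could consist entirely of $\SL_2(\R)$-characters, where $\mathcal{A}_y\cong\M_2(\R)$; nothing you have said rules this out. In fact, the claim that a real point on the canonical component must yield an $\SU(2)$ character is essentially Conjecture~\ref{conj:findsu2}, which is open. Notice also that you never use the hypothesis $\Delta_K(t)=1$ anywhere in Part 2, yet it is essential.

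The paper's argument proceeds differently: it establishes (i) $\Leftrightarrow$ (ii) directly. By Corollary~\ref{cor:morecor}, condition (i) is equivalent to $\mathcal{A}_y\cong\M_2(\R)$ for every $y\in\wt{C}(\R)$. For (i) $\Rightarrow$ (ii), suppose $y'\in C(\R)$ is an $\SU(2)$ character; then $A_\rho\subset\mathbb{H}_\R$. If $A_\rho\neq\mathbb{H}_\R$ the representation is reducible, and by Proposition~\ref{CCGLS} a non-abelian reducible character forces a root of $\Delta_K(t)$---impossible since $\Delta_K=1$. Hence $A_\rho=\mathbb{H}_\R$, so $y'$ cannot lie in the smooth locus (where $\mathcal{A}_{y'}=A_\rho$ would contradict (i)), giving $y'\in C_{sing}$. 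The converse not-(i) $\Rightarrow$ not-(ii) follows from constancy of the specialization on real circles (Theorem~\ref{thm:realBrauer}). This is the argument you are missing.

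Your Part 3 also has a gap. You work with $\Br(\wt{C})$ and $\Br(L)$ and then claim the $\Br(L)$-component of $[\mathcal{A}]$ vanishes ``from the tautological nature of $\mathcal{A}$ specialized at \ldots\ the cusp of $M$.'' The cusp of $M$ does not give an $L$-point of $\wt{C}$ in any evident way, and $\Br(L)$ is infinite, so this does not work. The paper instead exploits that $[\mathcal{A}]$ lives in $\Br(\mathcal{C})$ for the \emph{integral} model: Stuhler's exact sequence shows that if $\beta([\mathcal{A}])=0$ then $[\mathcal{A}]$ is pulled back from $\Br(O_L)$, and classes in $\Br(O_L)$ are nontrivial only at real places. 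Under hypothesis (iii) the $L$-point would then produce a real point of $\wt{C}$ with Hamiltonian specialization, a contradiction; hence the pulled-back class is trivial.
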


For an example of a reducible representation that is a singular point of the closure of the subscheme of irreducible representations on the character variety, see \cite[\S 6.2]{HP}. 

Theorem \ref{non-trivial} illustrates an approach to producing characters of irreducible $\SU(2)$-representations on the canonical component, albeit under the hypothesis $\Del_K(t)=1$. Among the large number of examples we have computed, we find the following reasonable (cf.\ \cite[\S5]{LongReidDefine}):

\begin{conj}\label{conj:findsu2}
Let $K$ be a hyperbolic knot in $S^3$. Then there is a real curve of $\SU(2)$ characters contained in the canonical component of the $\SL_2(\C)$ character variety.
\end{conj}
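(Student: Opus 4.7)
The plan is to extend the analysis of real points and Azumaya-algebra ramification from the Alexander-trivial case of Theorem \ref{non-trivial} to arbitrary hyperbolic knots, combining this with the gauge-theoretic existence result of Kronheimer--Mrowka to exhibit concrete $\SU(2)$ arcs on $\wt{C}_M$. I would proceed in three steps.

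First, I would establish that $\wt{C}_M$ has a non-empty real locus. When $\Delta_K(t) \neq 1$, the canonical component meets the reducible stratum at characters $\chi_\alpha$ with $\alpha^2$ a root of $\Delta_K$; complex conjugation on $\wt{C}_M(\C)$ permutes these Galois-equivariantly, and a careful fixed-point count for the resulting involution should force real points on at least one conjugation-invariant connected component of $\wt{C}_M$. The case $\Delta_K(t) = 1$ is essentially the content of Theorem \ref{non-trivial}, which already produces real arcs via the structure of $\Sha(L, O_L, \mathrm{Pic}^0(\wt{C}_M))$.

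Second, along any smooth real arc in $\wt{C}_M(\R)$ avoiding reducible and singular characters, the isomorphism class of $A_\rho$ at the distinguished real archimedean place of $k_\rho$ is locally constant, with two possible values: the Hamilton quaternions, corresponding to $\SU(2)$ characters, or $\M_2(\R)$, corresponding to $\SL_2(\R)$ characters. Using (a version of) the Azumaya algebra $A_{\wt{C}_M}$ from Theorem \ref{thm:MainKnot}, one reduces the conjecture to the assertion that at least one arc is of $\SU(2)$-type. To exhibit such an arc, I would invoke the Kronheimer--Mrowka theorem, which for every non-trivial knot produces a curve of irreducible $\SU(2)$ characters emanating from a reducible abelian character, and then use a tangent-space computation at that reducible character (in the style of Heusener--Porti) to certify that this curve lies on the canonical component rather than on some other component of $X(\Gam)$.

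The main obstacle is precisely the last ingredient: Kronheimer--Mrowka's theorem guarantees the existence of irreducible $\SU(2)$ characters but not their location on the character variety, and without further input it is not clear that their Zariski closure meets $C_M$. When $\Delta_K$ has a root on the unit circle one can run the Heusener--Porti deformation at the corresponding real reducible point on $C_M$ and conclude directly. For the remaining knots (e.g.\ the figure-eight, whose Alexander roots are real but of modulus different from one), a global topological argument is needed, perhaps a Casson--Lin-type signed count of irreducible $\SU(2)$ characters on each component of $X(\Gam)$, pinned down by the holonomy of the complete hyperbolic structure, in order to force a non-trivial contribution from $C_M$ itself.
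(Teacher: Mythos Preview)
This statement is Conjecture~\ref{conj:findsu2}, which the paper explicitly leaves \emph{open}; there is no proof in the paper to compare against. The authors motivate it by the large body of computed examples and by Theorem~\ref{non-trivial}, but they stress that ``it would be very interesting if an approach via the techniques of this paper could be used to give an alternate proof'' of the existence of $\SU(2)$ characters. So your task was not to reproduce a known argument but to resolve a genuine open question, and your write-up is best read as a research plan rather than a proof.

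As a plan it has real gaps beyond the one you flag. First, your appeal to Theorem~\ref{non-trivial} in the $\Delta_K(t)=1$ case does not do what you claim: that theorem shows the three conditions (i)--(iii) are equivalent and that their \emph{failure} produces $\SU(2)$ arcs, but it does not exclude the possibility that $\wt{C}$ has no real points at all. So even the trivial-Alexander case of the conjecture is not ``essentially'' Theorem~\ref{non-trivial}. Second, for $\Delta_K\neq 1$ you assert that the canonical component meets the reducible stratum at characters coming from roots of $\Delta_K$, but Proposition~\ref{CCGLS} only gives the converse implication: \emph{if} a reducible character lies on $C_M$ then the associated eigenvalue squares to a root of $\Delta_K$. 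There is no guarantee that any particular root of $\Delta_K$ actually contributes a point to $C_M$ rather than to some other component, so your fixed-point count for complex conjugation has nothing to anchor it. Third, and as you yourself acknowledge, Kronheimer--Mrowka produce $\SU(2)$ characters somewhere in $X(\Gam)$ but say nothing about the canonical component; the Heusener--Porti deformation only applies at reducible characters that already lie on $C_M$, which brings you back to the unproved assertion above. The Casson--Lin-type count you suggest at the end is a reasonable direction to explore, but as written none of the three steps is complete.
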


It would be very interesting if an approach via the techniques of this paper could be used to give an alternate proof that knots groups have irreducible $\SU(2)$ representations, as opposed to the gauge-theoretical methods of \cite{KM1, KM2}. In contrast with the discussion here, it is worth pointing out that there are constructions of $1$-cusped hyperbolic 3-manifolds for which the canonical component does not contain any real characters (see for example \cite{LongReidDefine}).

Furthermore, Theorem \ref{non-trivial} also ties Conjecture \ref{conj:findsu2} to the question as to which curves can possibly be the canonical component of the $\SL_2(\C)$ character variety of a hyperbolic knot. An immediate consequence of Theorem \ref{non-trivial} is the following special case of Conjecture \ref{conj:findsu2}.

\begin{cor}\label{cor:findsu2}
Let $K$ be a hyperbolic knot with trivial Alexander polynomial. Let $C\subset X(\Gam)_k$ be the canonical component, where $k$ is the field of constants of the function field $k(C)$. Suppose that $\wt{C}$ is a smooth projective model of $C$, $\wt{C}$ has a point over ${k}$, and that $\mathrm{Pic}^0(\wt{C})$ is the Jacobian of $\wt{C}$. If the Tate--Shafarevich group $\Sha({k},\mathrm{Pic}^0(\wt{C}))$ of $\mathrm{Pic}^0(\wt{C})$ is trivial, then either $A_{\wt{C}}$ is isomorphic to $\M_2(k(C))$ or $C$ contains infinitely many characters of non-abelian $\SU(2)$ representations.
\end{cor}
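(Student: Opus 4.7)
The plan is to deduce the corollary as a direct consequence of Theorem \ref{non-trivial}, using the Sha-vanishing hypothesis to close a dichotomy. Since $\Del_K(t) = 1$, Theorem \ref{non-trivial} applies and supplies an Azumaya algebra $\mathcal{A}$ over any regular integral model $\mathcal{C}$ of $\wt{C}$ over $O_L$, together with a class $\beta([\mathcal{A}])$ in the relative Tate--Shafarevich group $\Sha(L, O_L, \mathrm{Pic}^0(\wt{C}))$. I would then split into the two cases determined by whether the equivalent conditions (i)--(iii) of part 2 of Theorem \ref{non-trivial} hold.

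In the first case (where the conditions hold), condition (i) places $\beta([\mathcal{A}])$ inside the traditional Tate--Shafarevich group $\Sha(L, \mathrm{Pic}^0(\wt{C}))$, which is zero by hypothesis. Hence $\beta([\mathcal{A}]) = 0$. Since $\wt{C}$ has an $L$-point, part 3 of Theorem \ref{non-trivial} applies and yields $[\mathcal{A}] = 0$ in $\Br(\wt{C})$. Restricting to the generic fiber, $A_{k(C)}$ becomes trivial in $\Br(k(C))$, and as a quaternion algebra over a field this forces $A_{k(C)} \cong \M_2(k(C))$; this is the first alternative of the conclusion.

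In the second case (where conditions (i)--(iii) fail), the final sentence of part 2 of Theorem \ref{non-trivial} yields that $C_{\SU(2)} \ssm C_{sing}$ is a non-empty disjoint union of real arcs and circles, and in particular is infinite. To promote this to infinitely many \emph{non-abelian} $\SU(2)$ characters, I would use that reducible $\SL_2(\C)$-representations of $\Gam$ are precisely the abelian ones (their semisimplifications factor through $\Gam^{ab} \cong \Z$), so their characters form a $1$-dimensional closed subvariety of $X(\Gam)$ parameterized by the trace of a meridian. Since the tautological representation $P_C$ on $C$ is absolutely irreducible, $C$ is not contained in this reducible subvariety, so their intersection is a finite (possibly empty) set. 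Consequently all but finitely many points of $C_{\SU(2)} \ssm C_{sing}$ are characters of non-abelian $\SU(2)$ representations, yielding the second alternative.

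The heavy lifting is entirely contained in Theorem \ref{non-trivial}, so there is no genuinely difficult step here; the main obstacle, if any, is organizational -- ensuring the case split aligns correctly with the equivalences of Theorem \ref{non-trivial} part 2 and supplying the elementary observation that the reducible locus meets the canonical component in only finitely many points.
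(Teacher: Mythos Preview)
Your argument is correct and follows exactly the route the paper intends: the paper states this corollary as ``an immediate consequence of Theorem~\ref{non-trivial}'' without writing out any further proof, and your case split on the equivalences in part~2, together with part~3 and the Sha-vanishing hypothesis, is precisely the intended deduction. One small simplification: in the second case you do not need to argue via the general fact that the reducible locus meets $C$ in finitely many points; since $\Delta_K(t)=1$ has no roots, Proposition~\ref{CCGLS}(2) implies that $C$ contains \emph{no} characters of reducible representations whatsoever, so every point of $C_{\SU(2)}\ssm C_{sing}$ is automatically the character of an irreducible (hence non-abelian) $\SU(2)$ representation.
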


Our work also seems to suggest a connection between hyperbolic knot complements whose canonical components satisfy the conclusion of Theorem \ref{thm:MainKnot}, $L$-space knots, and knots whose complements have bi-orderable fundamental group. We discuss this in more detail in \S \ref{sec:Examples}, but in rough terms we do not know of an example of a hyperbolic $L$-space knot that satisfies the conclusion of Theorem \ref{thm:MainKnot}. In a similar direction, in \S \ref{sec:Examples} we discuss some connections between our work, (non-)orderability of hyperbolic knot groups, and fundamental groups of Dehn surgeries on knots. See \cite{CullerDunfield} for further work related to this connection.

\medskip

\noindent{\bf Acknowledgements:} T.\ C.\ was partially supported by NSF FRG Grant DMS-1360767, NSF FRG Grant DMS-1265290, NSF SaTC grant CNS-1513671, and Simons Foundation Grant 338379. T.\ C.\ would also like to thank L'Institut des Hautes \'{E}tudes Scientifiques for support during the Fall of 2015. 
A.\ W.\ R.\ was partially supported by NSF Grant DMS-1105002 and NSF FRG Grant DMS-1463797. He also acknowledges the support of NSF Grant DMS-1440140 whilst in residence at M.S.R.I. for part of Fall 2017, as well as the Wolfensohn Fund whilst he was in residence at The Institute for Advanced Study in Spring 2017. He would like to thank both these institutions for their support and hospitality.
M.\ S.\ was partially supported by NSF Grant DMS-1361000, Grant Number 523197 from the Simons Foundation/SFARI, and NSF Grant DMS-1906088. The authors acknowledge support from U.S.\ National Science Foundation grants DMS 1107452, 1107263, 1107367 "RNMS: GEometric structures And Representation varieties" (the GEAR Network).

The authors also thank the following people: Ian Agol for conversations about $\SU(2)$ representations; Nathan Dunfield for pointing out \cite{Konvalina--Matache}; Darren Long for a number of conversations related to this paper; Raman Parimala and Suresh Venapally for conversations about Brauer groups of curves. Finally, we thank the referee for very helpful comments.

\section{Representation and character varieties}\label{sec:CharVars}

In this section, we recall some basic facts about $\SL_2(\C)$ representation and character varieties.

\subsection{}\label{sec:BasicX}

Let $\Gamma$ be a finitely generated group, and consider the $\SL_2(\C)$ \emph{representation variety}
\[
R(\Gam) = \mathrm{Hom}(\Gam, \SL_2(\C)).
\]
The embedding $\SL_2(\C) \subset \M_2(\C) \cong \C^4$ with the obvious coordinates gives $R(\Gam)$ the structure of an affine algebraic subset of $\C^{4 n}$. These coordinates show that the embedding $R(\Gamma) \subset \C^{4n}$ is the base change of an embedding $R(\Gamma)_\Q \subset \mathbb{A}^{4n}_\Q$, where $R(\Gamma)$ and $R(\Gamma)_\Q$ are up to canonical isomorphism independent of the choice of a generating set for $\Gamma$. For all fields $L$ of characteristic $0$, we will denote $R(\Gamma)_\Q \otimes_{\Q} L$ by $R(\Gamma)_L$.

Two elements $\rho_1, \rho_2 \in R(\Gam)$ are called \emph{equivalent} if there is some $g \in \GL_2(\C)$ such that $\rho_2 = g \rho_1 g^{-1}$. A representation $\rho \in R(\Gam)$ is called \emph{reducible} if $\rho(\Gam)$ is conjugate into the subgroup of upper-triangular matrices and is called \emph{irreducible} if it is not reducible.

Let $\chi_\rho:\Gam\rightarrow \C$ denote the character of a representation $\rho$. For each $\gam \in \Gam$, consider the regular function $\tau_\gam : R(\Gam) \to \C$ defined by evaluating the character of $\rho$ at $\gam$:
\[
\tau_\gam(\rho) = \chi_\rho(\gam) = \tr(\rho(\gam))
\]
Since the trace is a conjugacy invariant, $\tau_\gam$ is constant on equivalence classes of representations. The subring $T$ of the ring of regular functions on $R(\Gam)$ generated by $\{\tau_\gam\}_{\gam \in \Gam}$ is finitely generated \cite[Prop.\ 1.4.1]{CullerShalen}. Therefore, we can fix $\gam_1, \dots, \gam_r \in \Gam$ such that $\{\tau_{\gam_i}\}_{i = 1}^r$ generates $T$.

Define $t : R(\Gam) \to \C^r$ by
\[
t(\rho) = \big( \tau_{\gam_1}(\rho), \dots, \tau_{\gam_r}(\rho) \big) \in \C^r.
\]
Note that if $\rho_1, \rho_2$ are equivalent representations, then $t(\rho_1) = t(\rho_2)$. The $\SL_2(\C)$ \emph{character variety} of $\Gam$ is
\[
X(\Gam) = t(R(\Gam)) \subseteq \C^r,
\]
and every irreducible component of $X(\Gam)$ containing the character of an irreducible representation is a closed affine algebraic variety \cite[Prop.\ 1.4.4]{CullerShalen}. For $\gam \in \Gam$, we also define the rational function
\[
I_\gam(\chi_\rho) = \chi_\rho(\gam),
\]
on $X(\Gam)$ induced by the class function $\tau_\gam$ on $R(\Gam)$ defined above.

To be more precise, \cite{CullerShalen} shows that $X(\Gam)$ has affine coordinate ring $\C[x_1,\ldots,x_r]/J$, where $J$ is the ideal of all polynomials that vanish on $X(\Gam)$ under the identification $x_i = \tau_{\gamma_i}$. Changing the generating set gives an isomorphic affine set, so $X(\Gam)$ is well-defined up to isomorphism. It is also shown in \cite{CullerShalen} that $X(\Gam)$ is defined over $\Q$ in the sense that $J$ is generated by polynomials in the variables $x_i$ with coefficients in $\Q$. We will use $X(\Gamma)_\Q$ to denote the affine scheme over $\Q$ whose ring is the image of $\Q[x_1,\ldots,x_r]$ in $\C[x_1,\ldots,x_r]/J$. The irreducible components of $X(\Gamma)_\Q$ then have constant field a number field.

If $F$ is an arbitrary field of characteristic zero, then a representation $\rho:\Gamma\rightarrow \SL_2(F)$ is {\em absolutely irreducible} if it remains irreducible over an algebraic closure $\overline{F}$ of $F$. A representation $\rho:\Gamma\rightarrow \SL_2(F)$ with non-abelian image is absolutely irreducible over an algebraic closure of $F$ if and only if $\rho$ is irreducible \cite[Lem.\ 1.2.1]{CullerShalen}.

Two irreducible representations of $\Gam$ are equivalent if and only if they have the same character \cite[Prop.\ 1.5.2]{CullerShalen}. In particular, if $x \in X(\Gam)$ is a point such that $x = t(\rho)$ for some irreducible representation $\rho \in R(\Gam)$, then $t^{-1}(x)$ is exactly the equivalence class of $\rho$. Furthermore, the reducible representations in $R(\Gam)$ are of the form $t^{-1}(V)$ for some closed algebraic subset $V$ of $X(\Gam)$ \cite[Prop.\ 1.4.2]{CullerShalen}.

We now record the following from \cite[Lem.\ 1.2.1]{CullerShalen}.

\begin{lem}\label{trace2}
In the above notation, if $\chi_\rho$ is the character of a reducible representation and $c\in [\Gam,\Gam]$, then $I_c(\chi_\rho)=2$.
\end{lem}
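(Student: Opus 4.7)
The plan is to exploit the fact that a reducible representation, by definition, can be conjugated into the Borel subgroup $B \subset \SL_2(\C)$ of upper-triangular matrices, and that characters (the trace function $I_c = \tau_c$) are invariant under conjugation.

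First I would reduce to the case where $\rho$ itself takes values in $B$. Since $\chi_\rho(c) = \tr(\rho(c))$ depends only on the equivalence class of $\rho$, we may assume without loss of generality that
\[
\rho(\gamma) = \begin{pmatrix} a(\gamma) & b(\gamma) \\ 0 & a(\gamma)^{-1} \end{pmatrix}
\]
for every $\gamma \in \Gamma$.

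Next I would observe that the assignment $\gamma \mapsto a(\gamma)$ is a homomorphism $a : \Gamma \to \C^{\times}$, since the diagonal of a product of upper-triangular matrices is the product of the diagonals. Because $\C^{\times}$ is abelian, $a$ factors through the abelianization $\Gamma / [\Gamma,\Gamma]$, and hence $a(c) = 1$ for every $c \in [\Gamma,\Gamma]$. Consequently, $\rho(c)$ is a unipotent upper-triangular matrix
\[
\rho(c) = \begin{pmatrix} 1 & b(c) \\ 0 & 1 \end{pmatrix},
\]
whose trace is $2$. Therefore $I_c(\chi_\rho) = \tr(\rho(c)) = 2$, as desired.

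There is essentially no obstacle here; this is a direct unwinding of the definition of reducibility together with the abelianness of the diagonal torus. The only subtlety worth flagging is the appeal to conjugation-invariance of the character to pass from ``reducible'' (conjugate into upper triangular) to ``actually upper triangular'', but this is immediate from the definition of $\chi_\rho$.
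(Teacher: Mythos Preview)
Your proof is correct and follows essentially the same approach as the paper: conjugate into the upper-triangular Borel and observe that commutators (hence all elements of $[\Gamma,\Gamma]$) land in the unipotent radical, so have trace $2$. Your phrasing via the diagonal character $a:\Gamma\to\C^\times$ factoring through the abelianization is a slightly more explicit way of saying what the paper compresses into ``the commutator of two upper-triangular matrices has $1$s on the diagonal.''
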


\begin{proof}
A reducible representation of $\Gam$ into $\SL_2(\C)$ can be conjugated to have image contained in the group of upper-triangular matrices. Since the commutator of two upper-triangular matrices has $1$s on the diagonal, the lemma follows.
\end{proof}

\begin{rem}\label{rem:reducible}
The converse of Lemma \ref{trace2} also holds. See \cite[Lem.\ 1.2.1]{CullerShalen}. Indeed this holds for representations to $\SL_2(F)$ for an arbitrary algebraically closed field $F$.
\end{rem}

\medskip

\noindent\textbf{Notation:}~In the case when $\Gamma$ is the fundamental group of the complement $S^3\ssm K$ of a knot $K$ in the $3$-sphere $S^3$, we denote the representation variety by $R(K)$ and the character variety by $X(K)$.

\subsection{}\label{ssec:TreeAction}

Throughout what follows, by an affine or projective curve we shall always mean an irreducible affine or projective curve.

Suppose that $C$ is an affine curve with field of constants the number field $k$. For us, $C$ will generally be a closed subscheme of $R(\Gam)_k$ or $X(\Gam)_k$. Let $C^\#$ denote the normalization of the reduction $C^{red}$ of $C$. Thus if $C = \Spec(A)$ we have $C^{red} = \Spec(A^{red})$ and $C^\# = \Spec(A^\#)$, where $A^{red}$ is the quotient of $A$ by its nilradical and $A^\#$ is the normalization of $A^{red}$ in the function field $k(C)$ of $C$. The natural morphsim $C^\# \to C$ is finite, and $C^\#$ is connected since $C$ is irreducible.

Denote the smooth projective completion of $C^\#$ by $\wt{C}$, so $\wt{C}$ is a smooth projective curve birationally equivalent to $C$ that contains $C^\#$ as an open dense subset. The \emph{ideal points} of $\wt{C}$ are $\wt{C} \smallsetminus C^\#$, which are the points at which the birational map $\wt{C}\rightarrow C$ is not defined. We denote the set of ideal points by $\mathcal{I}(\wt{C})$. Notice that a non-zero regular function $C \otimes_k \C \to \C$ induces a map $\wt{C} \otimes_k \C \to \mathbb{P}^1_{\C}$ whose poles are at points in $\mathcal{I}(\wt{C})$.

The following is a generalization of a fact implicit in Culler--Shalen \cite{CullerShalen} in the case where $C$ is an irreducible curve components of $X(\Gam)$.

\begin{lem}\label{lem:BuildTautological}
The morphism $R(\Gam)_k \to X(\Gam)_k$ is surjective. Suppose that $\eta_C$ is the generic point of an irreducible curve $C \subset X(\Gam)_k$. Then there is an irreducible curve $D \subset R(\Gam)_k$ such that $t(\eta_D) = \eta_C$ and $t(D) \subset C$, where $\eta_D$ denotes the generic point of $D$. The function field $k(D)$ of $D$ is a finite extension of the function field $k(C)$ of $C$. Further, there exists a representation $P_C : \Gam \to \SL_2(k(D))$ such that
\[
\chi_{P_C}(\gam)(\rho) = \chi_\rho(\gam)
\]
for any representation $\rho \in D$ and $\gam \in \Gam$. In other words, evaluating the function $\chi_{P_C}(\gam) \in k(D)$ at the point $\rho$ gives the value of the character $\chi_\rho$ at $\gam$.
\end{lem}

\begin{proof}
By Exercises II.3.18 and II.3.19 in \cite{Hartshorne}, the image of the morphism $R(\Gam)_k \to X(\Gam)_k$ is constructible, so it is the finite disjoint union of locally closed subsets of $X(\Gam)_k$. Therefore $t : R(\Gam)_k \to X(\Gam)_k$ is surjective as a map of affine schemes, since the base change
\[
R(\Gam) = R(\Gam)_k \otimes_k \C \to X(\Gam) = X(\Gam)_k \otimes_k \C
\]
is surjective on closed points by \cite[p.\ 117]{CullerShalen}. Therefore the fiber $t^{-1}(\eta_C)$ of $t$ over $\eta_C$ is a non-empty scheme over the residue field $k(\eta_C) = k(C)$. It therefore has a closed point $\eta_D$ as a scheme over the field $k(\eta_C)$. Now, $k(\eta_D)$ is a finite extension of $k(\eta_C)$ and the Zariksi closure $D$ of $\eta_D$ in $R(\Gam)_k$ is a curve in $R_k$ such that $t(D) \subset C$.

We let the representation $P_C$ be the one produced by $\eta_D$. Specifically, we arrive at a so-called \emph{tautological representation} $P_C: \Gam \to \SL_2(k(D))$, which we denote by
\[
P_C(\gam) = \begin{pmatrix} f_\gam^{1, 1} & f_\gam^{1, 2} \\ & \\ f_\gam^{2, 1} & f_\gam^{2, 2} \end{pmatrix},
\]
where $f_\gam^{i,j}\in k(D)$ is the function such that $f_\gam^{i,j}(\rho)$ is the $(i,j)$-entry of $\rho(\gam)$. The character of $P_C$ visibly has the property that $\chi_{P_C}(\gam)(\rho) = \chi_\rho(\gam)$ for all $\rho \in D$ and $\gam \in \Gam$. This proves the lemma.
\end{proof}

Given the representation $P_C$ described above, we record the following basic lemma that follows from the reasoning used in the proof of Lemma \ref{lem:BuildTautological} (also see \cite[Lem.\ 1.3.1]{CullerShalen}).

\begin{lem}\label{taut_irred}
In the above notation, if $C$ contains the character of an irreducible representation, then the representation $P_C$ from Lemma \ref{lem:BuildTautological} is (absolutely) irreducible.
\end{lem}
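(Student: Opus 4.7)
The plan is to argue by contradiction using the criterion for reducibility given by Remark \ref{rem:reducible} (the converse to Lemma \ref{trace2}): a representation into $\SL_2(F)$, with $F$ algebraically closed, is reducible if and only if every commutator has trace equal to $2$.

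Suppose toward a contradiction that $P_C$ fails to be absolutely irreducible. Passing to an algebraic closure $\overline{k(D)}$ of $k(D)$, the criterion recalled above forces the identity
\[
\chi_{P_C}(c) = 2 \quad \text{in } k(D)
\]
for every commutator $c \in [\Gamma,\Gamma]$. The key feature of the tautological representation constructed in Lemma \ref{lem:BuildTautological} is that each trace function $\chi_{P_C}(\gamma) \in k(D)$ is precisely the regular function on $D$ whose value at $\rho \in D$ is $\chi_\rho(\gamma)$. Hence the previous identity, read pointwise, says
\[
\chi_\rho(c) = 2 \quad \text{for every } \rho \in D \text{ and every } c \in [\Gamma,\Gamma].
\]

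Now I would invoke the converse direction of Lemma \ref{trace2}, as recorded in Remark \ref{rem:reducible}, applied over the algebraically closed field $\C$: if every commutator has trace $2$ under $\rho : \Gamma \to \SL_2(\C)$, then $\rho$ is reducible. Thus every $\rho \in D$ is reducible, and consequently every character in $C = t(D)$ is the character of a reducible representation. This contradicts the hypothesis that $C$ contains the character of an irreducible representation of $\Gamma$, completing the proof.

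The step requiring the most care is the translation between the "generic" statement $\chi_{P_C}(c) = 2$ in $k(D)$ and the pointwise statement over $D$; this rests entirely on the defining property of $P_C$ provided by Lemma \ref{lem:BuildTautological}. No further obstacle arises: the Culler--Shalen reducibility criterion over an algebraically closed field does all the remaining work.
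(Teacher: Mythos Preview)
Your proof is correct and follows essentially the same route as the paper's: assume $P_C$ is reducible over an algebraic closure, use the upper-triangular form (Lemma~\ref{trace2}) to get $\chi_{P_C}(c)=2$ for all $c\in[\Gamma,\Gamma]$, translate this via the defining property of $P_C$ to $\chi_\rho(c)=2$ for all $\rho$, and then apply Remark~\ref{rem:reducible} over $\C$ to force every representation with character on $C$ to be reducible. The only cosmetic difference is that you phrase the pointwise step in terms of $\rho\in D$ while the paper phrases it in terms of $\chi_\rho\in C$; these are equivalent since $t(D)=C$.
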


\noindent{\bf Notation:}~We recall the following notation. Let $C$ be a possibly singular projective curve with smooth projective model $\wt{C}$ and $P\in \widetilde{C}$. Then for $\alpha = f/g \in k(C)$, with $f,g\in k[C]$ regular functions we set
\[
\ord_P(\alpha) = \ord_P(f) - \ord_P(g),
\]
where $\ord_P(f)$ (resp.\ $\ord_P(g)$) is the order of vanishing of $f$ (resp.\ $g$) at $P$. Then $\ord_P$ can be used to define a valuation of $k(C)$ at $P$ for which the {\em local ring} at $P$, will be denoted by $\mathcal{O}_P$ and consists of those $\alpha$ with $\ord_P(\alpha) \geq 0$, and its unique maximal ideal, denoted by $\mathfrak{m}_P$, consists of those $\alpha$ with $\ord_P(\alpha) > 0$. The residue field of the point $P$ is given by $k(P) = \mathcal{O}_P / \mathfrak{m}_P$.

\medskip

We introduce the following additional notation. Note that any function on $C$ in the ring of functions generated by the character functions $I_\gamma$ extends to a rational function $\wt{C}\rightarrow \mathbb{P}^1$. For $\gam \in \Gam$, let $\widetilde{I}_\gam : \widetilde{C} \to \mathbb{P}^1$ be the rational function induced by
\[
I_\gam(\chi_\rho) = \chi_\rho(\gam),
\]
where $\chi_\rho \in C$. Recall that $I_\gam$ is the function on $C$ induced by the class function $\tau_\gam$ on $R(\Gam)$ defined above. We will also frequently consider the related function
\[
f_\gamma(\chi_\rho) = \tr(\rho(\gamma))^2-4 = I_\gamma(\chi_\rho)^2-4,
\]
which vanishes precisely when $\rho(\gamma)$ is either unipotent or central.

With $C$ and $D$ as in the previous discussion, we obtain a finite morphism $\widetilde{t}: \widetilde{D} \to \widetilde{C}$. The ideal points on $\widetilde{D}$ are the inverse images of the ideal points of $\widetilde{C}$ under this map.

For a point $p\in \widetilde{D}$ with $\widetilde{t}(p)=q$, we have an associated local ring $\mathcal{O}_p$. If $\gam \in \Gam$, then as shown in \cite[Thm.\ 2.2.1]{CullerShalen}, the following conditions are equivalent:
\begin{enumerate}

\item[i.] every $P_C(\gam)\subset \SL_2(F)$ is $\GL_2(F)$-conjugate to an element of $\SL_2(\mathcal{O}_p)$;

\item[ii.] $\widetilde{I}_\gam$ does not have a pole at $q$.

\end{enumerate}
Since the functions $\widetilde{I}_\gam$ over all $\gam \in \Gam$ generate the ring of regular functions on $X(\Gam)$, for any $x\in \mathcal{I}(\widetilde{C})$ we can find a non-trivial $\gam \in \Gam$ such that $\widetilde{I}_\gam$ has a pole at $x$.

We now prove a slight strengthening of part (2) of Theorem \ref{thm:FirstMain}.

\begin{lem}\label{lem:ResidueFieldTraceField}
Let $X(\Gam)_\Q$ be the model of $X(\Gam)$ over $\Q$ discussed in the paragraph prior to Theorem \ref{thm:FirstMain} of the introduction. Suppose that $k$ is a subfield of $\C$ and $C$ is an irreducible curve over $k$ that is a closed subscheme of the base change $X(\Gam)_\Q \otimes_\Q k$ of $X(\Gam)_\Q$ to $k$. For any $z \in C^\sharp = \wt{C} \ssm \mathcal{I}(\wt{C})$, let $w = w(z)$ be the image of $z$ on $C$, $\rho_w$ be a complex representation associated with $w$ and an embedding over $k$ of the residue field $k(w)$ into $\C$, and let $k_{\rho_w}$ be the trace field of $\rho_w$. Then
\[
k_{\rho_w} \subseteq k(w) \subseteq k(z),
\]
$k(z) = k(w)$ if $w$ is a smooth point of $C$, and $k(w)$ is generated by $k$ and $k_{\rho_w}$.
\end{lem}

\begin{proof}
By \cite{CullerShalen}, the affine ring of $X(\Gam)_\Q$ is generated as a $\Q$-algebra by trace functions associated to elements of $\Gamma$. Therefore when we view $w = w(z)$ as a closed point of $X(\Gam)_\Q \otimes_\Q k$, the residue field $k(w)$ is generated by $k$ and polynomial expressions in the values of trace functions at $w$. This shows that $k(w)$ is generated by $k$ and $k_{\rho_w}$, and the rest of the assertions in the statement of the lemma are clear.
\end{proof}

\subsection{One-cusped hyperbolic 3-manifolds}

We now specialize some of the above discussion to the case of most interest to us, namely hyperbolic 3-manifolds. Throughout this paper, a hyperbolic $3$-manifold will always mean a connected, oriented, and complete manifold $M$ of the form $\Hthree / \Gam$, where $\Gam \cong \pi_1(M)$ is a torsion-free discrete subgroup of $\Isom^+(\Hthree) \cong \PSL_2(\C)$.

If $M$ is finite volume but not compact, then $M$ is the interior of a compact, irreducible, 3-manifold whose boundary is a finite union of incompressible tori. In this case there is a discrete and faithful representation $\rho_0 : \Gam \to \PSL_2(\C)$ coming from the holonomy of the complete structure on $M$, and local rigidity implies that any other discrete and faithful representation of $\Gam$ into $\PSL_2(\C)$ is equivalent to $\rho_0$, so we can speak of `the' discrete and faithful representation of $\Gam$. Thurston showed that $\rho_0$ lifts to a representation $\widehat{\rho}_0 : \Gam \to \SL_2(\C)$ \cite[Prop.\ 3.1.1]{CullerShalen}. In general there will be several lifts of $\rho_0$, even up to equivalence, however for us it will not matter which lift we consider, and similarly for a character $\chi_{\rho_0}$.

Define a \emph{canonical component} $X_0(\Gam) \subset X(\Gam)$ to be an irreducible component of $X(\Gam)$ containing some $\chi_{\rho_0}$. In particular, $X_0(\Gam)$ is an affine algebraic variety with field of constants a number field. When $M$ is a non-compact hyperbolic $3$-manifold of finite volume with $d$ ends homeomorphic to $T^2 \times [0, \infty)$ (where $T^2$ is the $2$-torus), Thurston showed that $X_0(\Gam)$ has complex dimension exactly $d$ \cite[Prop.\ 3.2.1]{CullerShalen}. We summarize the above discussion in the following result.

\begin{thm}\label{thm:CanonCpt}
Let $M$ be a non-compact hyperbolic $3$-manifold of finite volume with $d$ ends and set $\Gam = \pi_1(M)$. Then any canonical component $X_0(\Gam)$ is a $d$-dimensional affine algebraic variety with field of constants a number field. In particular, when $M$ is a one-cusped hyperbolic $3$-manifold, the canonical component $X_0(\Gam)$ is an affine curve.
\end{thm}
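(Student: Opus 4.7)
The plan is to assemble Theorem \ref{thm:CanonCpt} directly from the two ingredients already recalled in the preceding paragraph, since the statement is largely a summary of the known results of Culler--Shalen and Thurston.

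First I would verify the field of definition. As noted in \S\ref{sec:BasicX}, Culler--Shalen show that $X(\Gam)$ is defined over $\Q$, so any irreducible component is defined over an algebraic extension of $\Q$. Since $\Gam$ is finitely generated, $X(\Gam)$ sits inside some $\C^r$ with defining ideal in $\Q[x_1,\dots,x_r]$; the Galois group $\mathrm{Gal}(\overline{\Q}/\Q)$ acts on the finite set of irreducible components of $X(\Gam)_{\overline{\Q}}$, so the stabilizer of any given component has finite index. Hence the field of definition of $X_0(\Gam)$ is a number field.

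Next I would establish the dimension claim. Fix a lift $\widehat{\rho}_0:\Gam\to\SL_2(\C)$ of the holonomy representation $\rho_0$, as guaranteed by Thurston's lifting result, and let $\chi_{\widehat{\rho}_0}$ be the corresponding character lying on $X_0(\Gam)$. The cited Proposition 3.2.1 of \cite{CullerShalen} (which encodes Thurston's hyperbolic Dehn surgery parametrization) states that the irreducible component of $X(\Gam)$ through $\chi_{\widehat{\rho}_0}$ has complex dimension equal to the number of cusps of $M$, namely $k$. Thus $\dim_\C X_0(\Gam) = k$, and combined with the first step we conclude that $X_0(\Gam)$ is a $k$-dimensional affine algebraic variety defined over a number field.

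The one-cusped case is then immediate: setting $k=1$ yields $\dim_\C X_0(\Gam) = 1$, so $X_0(\Gam)$ is an affine curve. The main substantive input is Thurston's deformation-theoretic computation of the local dimension at $\chi_{\widehat{\rho}_0}$ (the existence of one complex parameter per cusp arising from the Dehn surgery space, together with the fact that $\chi_{\widehat{\rho}_0}$ is a smooth point of $X(\Gam)$), but since this is exactly what is cited from \cite{CullerShalen}, no new argument is required; the remaining content is the purely algebraic observation about fields of definition of irreducible components.
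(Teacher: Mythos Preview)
Your proposal is correct and matches the paper's treatment: the theorem is presented there as a summary of the preceding discussion, invoking exactly the two ingredients you use, namely that $X(\Gam)$ is defined over $\Q$ (so its irreducible components are defined over number fields) and Thurston's dimension count via \cite[Prop.\ 3.2.1]{CullerShalen}. Your Galois-orbit argument for the field of definition is a bit more explicit than what the paper writes, but the approach is the same.
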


We have the following important lemma regarding the tautological representation
in the case of a canonical component.

\begin{lem}\label{taut_faithful}
Let $M$ be a 1-cusped hyperbolic 3-manifold and $C$ the canonical component. Then the tautological representation $P_C$ is faithful.
\end{lem}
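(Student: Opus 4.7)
The plan is to use the defining property of $P_C$ to transfer a hypothetical non-trivial kernel element down to a non-trivial kernel element of the discrete faithful representation $\rho_0$, contradicting Mostow--Prasad rigidity. Concretely, suppose $\gam \in \Gam$ lies in $\ker(P_C)$. By the construction of $P_C$ from Lemma \ref{lem:BuildTautological}, the entries of $P_C(\gam)$ are the rational functions $f^{i,j}_\gam \in k(D)$ characterized by $f^{i,j}_\gam(\rho) = \rho(\gam)_{i,j}$. If $P_C(\gam) = I$ as an element of $\SL_2(k(D))$, these rational functions are the constants matching the entries of $I$, and hence $\rho(\gam) = I$ for every $\rho \in D$.

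Next I would locate a point of $D$ whose representation is conjugate to the discrete faithful representation $\rho_0$. Since $C$ is a canonical component, $\chi_{\rho_0}$ lies on $C$. Because $t(D) = C$ by Lemma \ref{lem:BuildTautological}, we can pick $\rho^* \in D$ with $t(\rho^*) = \chi_{\rho_0}$. Any discrete faithful representation of a non-elementary Kleinian group is irreducible, so $\rho_0$ is irreducible; by the fact that irreducible representations with the same character are equivalent (\cite[Prop.\ 1.5.2]{CullerShalen}), we have $\rho^* = g\rho_0 g^{-1}$ for some $g \in \GL_2(\C)$.

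Combining the two observations, the identity $\rho^*(\gam) = I$ established above gives $g \rho_0(\gam) g^{-1} = I$, hence $\rho_0(\gam) = I$. Since $\rho_0$ is faithful, $\gam = 1$, which proves that $\ker(P_C)$ is trivial.

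The only step that requires care is the assertion that $\rho_0$ is actually in the image $t(D)$ and not lost by the auxiliary choices made when constructing $D$; this is handled purely by the surjectivity $t(D) = C$ built into Lemma \ref{lem:BuildTautological} together with irreducibility of $\rho_0$. No delicate arithmetic or algebraic geometry input is needed beyond the tautological property of $P_C$ and the standard fact that irreducible representations are determined up to conjugation by their characters, so the main (mild) obstacle is really just bookkeeping between representations in $D$ and points of $C$.
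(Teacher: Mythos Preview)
Your proof is correct and takes a genuinely different route from the paper's argument. The paper works only at the level of traces: from $P_C(\gamma)=I$ it deduces $I_\gamma\equiv 2$ on $C$, so in particular $\chi_{\rho_0}(\gamma)=2$; since $\rho_0$ is discrete and faithful this forces $\gamma$ to be peripheral, and then one invokes \cite[Prop.\ 1.1.1]{CGLS} (as in the proof of Lemma~\ref{finitelymanyreducible}) that $I_\gamma$ is non-constant for non-trivial peripheral $\gamma$, a contradiction. You instead exploit the full matrix information carried by $P_C$ on $D$: since $P_C(\gamma)=I$ forces $\rho(\gamma)=I$ for every $\rho\in D$, and since $t^{-1}(\chi_{\rho_0})$ is exactly the conjugacy class of the irreducible representation $\rho_0$, any $\rho^*\in D$ over $\chi_{\rho_0}$ yields $\rho_0(\gamma)=I$ and hence $\gamma=1$. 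Your approach is more direct and avoids the CGLS non-constancy input; the paper's approach has the mild advantage of working entirely with trace functions on $C$ and not depending on $t|_D$ actually hitting the specific point $\chi_{\rho_0}$.
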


\begin{proof}
Suppose that $P_C$ is not faithful. Then there exists a non-trivial $\gamma\in \pi_1(M)$ such that $P_C(\gamma)=I$. In particular, this means that $\chi_\rho(\gamma)=2$ for all $\chi_\rho\in C$. Since $C$ is a canonical curve, the only non-trivial elements of $\pi_1(M)$ with trace $2$ under a faithful discrete representation are peripheral elements, hence $\gamma$ is peripheral. However, \cite[Prop.\ 1.1.1]{CGLS} implies that the function $I_\gamma$ is non-constant on $C$ when $\gamma$ is a non-trivial peripheral element, and this is a contradiction.
\end{proof}

\subsection{}\label{sec:InvQuat}

Given a non-elementary subgroup $H$ of $\SL_2(\C)$ we can associate a field and quaternion algebra as follows (see \cite[Ch.\ 3]{MaclachlanReid}). The \emph{trace field} of $H$ is the field $k_H=\Q(\tr(\gamma) : \gamma\in H)$ and the quaternion algebra is
\[
A_H = \left\{ \sum_{i = 1}^n \alpha_i \gam_i\ :\ \alpha_i \in k_H, \gam_i \in H \right\},
\]
i.e., the $k_H$-span of $H$ in $\M_2(\C)$.

If $H$ is a Kleinian group of finite co-volume then $k_H$ is a number field. Let $H^{(2)}$ denote the (finite index) subgroup of $H$ generated by the squares of all elements in $H$; this is the kernel of the homomorphism from $H$ onto its maximal elementary $2$-abelian quotient. The {\em invariant trace field and quaternion algebra} associated with a finitely generated non-elementary subgroup are $kH = k_{H^{(2)}}$ and $AH = A_{H^{(2)}}$. These are invariants of the commensurability class of $H$ in $\PSL_2(\C)$, and $k\Gam$ is also equal to $\mathrm{tr}(\mathrm{Ad}(\Gam))$. When $H_1(H,\mathbb{F}_2)=\{0\}$ or when $H$ is the fundamental group of a knot complement in an integral homology sphere, the invariant trace field and quaternion algebra coincide with the trace field and the algebra $A_H$. See \cite[\S 4.2]{MaclachlanReid}.

A Hilbert symbol (see \cite[p.\ 78]{MaclachlanReid} for the definition) for $A_H$ is readily described using a pair of non-commuting elements as follows. Suppose that $g$ and $h$ are hyperbolic elements of $H$ with $[g,h]\neq 1$. Then, following \cite[\S 3.6]{MaclachlanReid}, a Hilbert symbol for $A_H$ is given by
\[
\left( \frac{\tr^2(g)-4,\tr([g,h])-2}{k_H} \right).
\]

\subsection{}\label{sec:CanonQuat}

We now define the quaternion algebra $A_{k(C)}$ over the function field $k(C)$ of $C$ that will be the central object of study in this paper.

We begin with some general comments in the setting finitely generated groups. Let $\Gamma$ be a finitely generated group and $C$ a geometrically irreducible curve over a number field $k$ that is a closed subscheme of $X(\Gamma)_k$, and suppose that $C$ contains the character of an irreducible representation. As in Lemma \ref{lem:BuildTautological}, fix an irreducible curve $D\subset R(\Gam)_k$ such that $t(D)=C$ and the function field $F = k(D)$ of $D$ is a finite extension of the function field $k(C)$ of $C$. As above, we have the tautological representation $P_C:\Gam\rightarrow \SL_2(F)$.

Since $C$ contains the character of an irreducible representation, we know from Lemma \ref{taut_irred} that $P_C$ is absolutely irreducible. We can then define $A_{k(C)}$ to be the $k(C)$-subalgebra of $\M_2(F)$ generated by the elements of $P_C(\Gam)$. That is,
\[
A_{k(C)} = \left\{ \sum_{i = 1}^n \alpha_i P_C(\gam_i)\ :\ \alpha_i \in k(C), \gam_i \in \Gam \right\}.
\]
Exactly as in the proof of Lemma \ref{lem:BuildTautological}, $A_{k(C)}$ has the structure of a quaternion algebra over $k(C)$. We refer to $A_{k(C)}$ as the {\em canonical quaternion algebra}. It will be helpful to record part of the proof of this, namely that $A_{k(C)}$ is $4$-dimensional over $k(C)$, by identifying certain elements of $\Gam$ whose images under $P_C$ provide a $k(C)$-basis.

\begin{lem}\label{non-commuting}
In the notation above, there exists a pair of elements $g,h\in \Gam$ so that the regular functions $I_g^2 - 4$ and $I_{[g,h]}-2$ are not identically zero on $\wt{C}$. More specifically, given any $g\in \Gam$ so that $I_g$ is not constant with value $\pm 2$ on $\wt{C}$, there is an element $h\in \Gam$ so that $I_{[g,h]}-2$ is not identically zero on $\wt{C}$.
\end{lem}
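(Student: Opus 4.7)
The plan is to prove both claims by contradiction, combining the absolute irreducibility of the tautological representation $P_C$ provided by Lemma~\ref{taut_irred} with the Fricke trace identity
\[
I_{[g,h]} = I_g^2 + I_h^2 + I_{gh}^2 - I_g\, I_h\, I_{gh} - 2,
\]
which is valid for any pair of matrices in $\SL_2$ and hence holds as an identity of rational functions on $\wt{C}$.

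First I would establish the existence of $g \in \Gam$ with $I_g^2 - 4 \not\equiv 0$ on $\wt{C}$. Suppose toward a contradiction that $I_g^2 - 4 \equiv 0$ for every $g \in \Gam$. Since $\wt{C}$ is irreducible, the factorization $(I_g - 2)(I_g + 2) \equiv 0$ forces $I_g$ to be the constant function $\pm 2$, so $\epsilon(g) := I_g/2$ defines a map $\epsilon \colon \Gam \to \{\pm 1\}$. Substituting $I_g = 2\epsilon(g)$, $I_h = 2\epsilon(h)$, $I_{gh} = 2\epsilon(gh)$, and $I_{[g,h]} = 2\epsilon([g,h])$ into the Fricke identity yields
\[
2\,\epsilon([g,h]) = 10 - 8\,\epsilon(g)\,\epsilon(h)\,\epsilon(gh).
\]
Since the left-hand side lies in $\{\pm 2\}$, the only possibility is $\epsilon(g)\epsilon(h)\epsilon(gh) = 1$ and $\epsilon([g,h]) = 1$ for all $g, h \in \Gam$. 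Hence $\epsilon$ is a homomorphism and $[\Gam,\Gam] \subseteq \ker \epsilon$, so $I_c \equiv 2$ for every $c \in [\Gam,\Gam]$. Remark~\ref{rem:reducible} then forces $P_C$ to be reducible, contradicting Lemma~\ref{taut_irred}.

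Next I would handle the stronger ``indeed'' statement: given any $g \in \Gam$ with $I_g^2 - 4 \not\equiv 0$, produce $h \in \Gam$ with $I_{[g,h]} \not\equiv 2$. Suppose toward a contradiction that $I_{[g,h]} \equiv 2$ on $\wt{C}$ for every $h \in \Gam$. Viewing $P_C$ as a representation into $\SL_2(\overline{F})$, the nonvanishing of the discriminant $I_g^2 - 4 \in F$ means $P_C(g)$ has two distinct eigenvalues in $\overline{F}$, and hence two distinct eigenvectors $v_1, v_2 \in \overline{F}^2$ up to scalar. The classical $\SL_2$-fact that $\tr([A,B]) = 2$ holds if and only if $A$ and $B$ share a common eigenvector then implies that every $P_C(h)$ has $v_1$ or $v_2$ as an eigenvector. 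The sets $H_i := \{h \in \Gam : v_i\ \text{is an eigenvector of}\ P_C(h)\}$ are subgroups of $\Gam$ with $\Gam = H_1 \cup H_2$, and since a group is never the union of two proper subgroups, $H_1 = \Gam$ or $H_2 = \Gam$. Thus $P_C(\Gam)$ stabilizes a line in $\overline{F}^2$, contradicting the absolute irreducibility of $P_C$. The main subtlety will be the passage to $\overline{F}$ so that the two eigenvectors of $P_C(g)$ actually exist; this is precisely why \emph{absolute} irreducibility, as opposed to $F$-irreducibility, is the crucial hypothesis.
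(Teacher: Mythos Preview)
Your proof is correct and in fact somewhat cleaner than the paper's own argument. The paper proceeds differently: for the existence of a suitable pair it appeals to \cite[Lem.~1.5.1]{CullerShalen} to produce $h$ with $P_C|_{\langle g,h\rangle}$ irreducible and $I_h\not\equiv\pm 2$, and then diagonalizes $P_C(g)$ over $\overline{F}$ and computes directly that $\tr([P_C(g),P_C(h)])=2$ forces $bc=0$, contradicting irreducibility of the restriction. Your argument replaces this external citation with two self-contained steps: the Fricke identity plus Remark~\ref{rem:reducible} for the existence of $g$, and the ``common eigenvector'' characterization of $\tr([A,B])=2$ together with the elementary fact that a group is not the union of two proper subgroups for the second claim. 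The latter trick neatly avoids having to first isolate a two-generator subgroup on which $P_C$ is irreducible. The underlying matrix computation (your ``classical $\SL_2$-fact'') is of course the same $bc=0$ calculation the paper carries out, just packaged as a general equivalence rather than performed ad hoc; note that it is important here that $I_g^2-4\neq 0$ in $F$ so that $P_C(g)$ really is diagonalizable with two distinct eigenlines, which you correctly flag.
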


\begin{proof}
Since $P_C$ is irreducible it has non-abelian image, so there exists $g\in\Gam$ so that $P_C(g)\neq \pm I$. As argued in \cite[Lem.\ 1.5.1]{CullerShalen}, we can find $h\in\Gam$ so that $P_C$ restricted to the subgroup $H$ generated by $g$ and $h$ is irreducible and $\chi_\rho(h)\neq \pm 2$ for all $\chi_\rho\in C$.

It follows from irreducibility that $I_{[g,h]}-2\neq 0$. Indeed, by assumption we can conjugate $P_C(g)$ over the algebraic closure of $k(C)$ to be the diagonal matrix
\[
P_C(g) = \begin{pmatrix} u & 0 \\ & \\ 0 & 1/u \end{pmatrix}
\]
for some function $u\neq \pm 1$. Then,
\[
P_C(h) = \begin{pmatrix} a & b \\ & \\ c & d \end{pmatrix}
\]
for functions $a,b,c,d$ in the algebraic closure of $k(C)$. Computing $\tr([P_C(g),P_C(h)])$ and setting this equal to $2$ we obtain the equation
\[
bc(2-(u+1/u))=0.
\]
It follows that $bc=0$. However this cannot be the case, as it would then follow that the restriction of $P_C$ to $H$ is either upper- or lower-triangular, i.e., reducible on $H$, which is a contradiction. This proves the first part of the lemma.

The second part follows the same line of argument after noticing that $I_h$ not being constant with value $\pm 2$ implies that $P_C(h)\neq \pm I$.
\end{proof}

When the tautological representation $P_C$ is (absolutely) irreducible, using Lemma \ref{non-commuting} and following \cite[\S 3.6]{MaclachlanReid} we see that there exist elements $\{g,h\} \in \Gam$ so that $\{1,P_C(g),P_C(h),P_C(gh)\}$ is a basis for $A_{k(C)}$ over $k(C)$. With this one can describe a Hilbert symbol (cf.\ \S \ref{sec:InvQuat}).

\begin{cor}\label{hilberttaut}
Let $\Gamma$ be a finitely generated group and $C$ be a geometrically integral curve over $k$ that is a closed subscheme of $X(\Gamma)_k$. Assume that $C$ contains the character of an irreducible representation, and let $g, h \in \Gam$ be two elements such that there exists a representation $\rho \in R(\Gam)$ with character $\chi_\rho \in C$ for which the restriction of $\rho$ to $\langle g, h \rangle$ is irreducible. Then the canonical quaternion algebra $A_{k(C)}$ is described by the Hilbert symbol
\[
\left( \frac{I_g^2 - 4\, ,\, I_{[g,h]}-2}{k(C)} \right).
\]
\end{cor}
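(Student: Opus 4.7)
The plan is to apply the standard construction of a Hilbert symbol from two noncommuting $\SL_2$-elements, as recalled in \S\ref{sec:InvQuat}, but now working over the function field $k(C)$ and using the tautological representation $P_C$ from Lemma~\ref{lem:BuildTautological}. The argument is essentially an adaptation of the computation in \cite[\S3.6]{MaclachlanReid} to this relative setting.

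First I would check that the Hilbert symbol in the statement is well-defined, i.e.\ that both entries are nonzero in $k(C)$. The hypothesis supplies a $\chi_\rho\in C$ with $\rho|_{\langle g,h\rangle}$ irreducible, so specializing $P_C$ at $\chi_\rho$ shows that $P_C|_{\langle g,h\rangle}$ is itself an irreducible representation. In particular $I_g$ is not identically $\pm 2$ on $\wt{C}$, and the argument in the second part of Lemma~\ref{non-commuting} (applied to $P_C|_{\langle g,h\rangle}$) then yields $I_{[g,h]} - 2 \not\equiv 0$ as an element of $k(C)$.

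Next I would construct a standard quaternion basis $\{1,\mathbf{i},\mathbf{j},\mathbf{i}\mathbf{j}\}$ for $A_{k(C)}$ from $P_C(g)$ and $P_C(h)$. Set $\mathbf{i} := 2\,P_C(g) - I_g\cdot \mathrm{Id}$, which is traceless; applying Cayley--Hamilton to $P_C(g)\in \SL_2(F)$ gives $P_C(g)^2 = I_g\,P_C(g) - \mathrm{Id}$, from which a direct expansion yields $\mathbf{i}^2 = (I_g^2 - 4)\cdot \mathrm{Id}$. Then take $\mathbf{j}$ to be the traceless part $2\,P_C(h) - I_h \cdot \mathrm{Id}$, adjusted by a $k(C)$-multiple of $\mathbf{i}$ chosen to force $\mathbf{i}\mathbf{j} + \mathbf{j}\mathbf{i} = 0$; this correction exists in $k(C)$ because the anticommutator obstruction is linear in the correction parameter with leading coefficient $\mathbf{i}^2 \neq 0$. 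A direct computation, using the Fricke identity $\tr([g,h]) + 2 = \tr(g)^2 + \tr(h)^2 + \tr(gh)^2 - \tr(g)\tr(h)\tr(gh)$, shows that $\mathbf{j}^2$ is a nonzero element of $k(C)^\times$ times $(I_{[g,h]} - 2)$. By Lemma~\ref{non-commuting}, $\{1, P_C(g), P_C(h), P_C(gh)\}$ is a $k(C)$-basis of $A_{k(C)}$, and the change of basis to $\{1, \mathbf{i}, \mathbf{j}, \mathbf{i}\mathbf{j}\}$ is invertible over $k(C)$, so the latter is also a $k(C)$-basis.

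The final step is to read off the Hilbert symbol. The basis $\{1,\mathbf{i},\mathbf{j},\mathbf{i}\mathbf{j}\}$ satisfies $\mathbf{i}^2, \mathbf{j}^2 \in k(C)^\times$ and $\mathbf{i}\mathbf{j} = -\mathbf{j}\mathbf{i}$, so by definition it exhibits $A_{k(C)}$ in standard Hilbert-symbol form, matching the presentation in the statement up to the irrelevant scalar squares in the entries (which do not affect the isomorphism class of the symbol). The only step involving any genuine calculation — and thus the main (modest) obstacle — is the Fricke-identity computation that $\mathbf{j}^2$ has a nonzero $k(C)^\times$-coefficient in front of $I_{[g,h]} - 2$; this is precisely where the nondegeneracy established in the first step gets used.
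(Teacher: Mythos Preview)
Your approach is essentially the paper's: the paragraph preceding the corollary simply cites Lemma~\ref{non-commuting} and \cite[\S 3.6]{MaclachlanReid} (together with the formula recalled in \S\ref{sec:InvQuat}) to conclude that $\{1,P_C(g),P_C(h),P_C(gh)\}$ is a $k(C)$-basis and that the standard Gram--Schmidt/Cayley--Hamilton construction over $k(C)$ produces the stated Hilbert symbol. You have written out exactly that construction.

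One small point: your inference that irreducibility of $P_C|_{\langle g,h\rangle}$ forces $I_g\not\equiv\pm 2$ is not valid in general---$P_C(g)$ could be a nontrivial parabolic and the restriction still irreducible. This nonvanishing is genuinely needed (both for the Hilbert symbol to be defined and for your division by $\mathbf{i}^2$), but it is really a mild imprecision in the corollary's hypotheses rather than a flaw in your argument; in every application in the paper $I_g\not\equiv\pm 2$ is checked directly (e.g.\ via Lemma~\ref{non-commuting} or Corollary~\ref{nounipotent}). Note also that your computation correctly gives $\mathbf{i}^2=I_g^2-4$, matching \S\ref{sec:InvQuat} and all later uses; the ``$I_g^2-2$'' in the printed statement is a typo.
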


In \S \ref{sec:Azumaya}, we will describe how the quaternion algebra $A_{k(C)}$ can be described instead as an Azumaya algebra over $k(C)$. This will provide the correct context for the above discussion to be applied to prove our main results.

\subsection{Knot complements}\label{ssec:Knots}

We now specialize some of the previous discussion to hyperbolic knot
complements. We fix the following notation for the remainder of this paper. If $K \subset S^3$ is a non-trivial knot, $E(K)$ will denote the exterior of $K$. We fix a standard pair of preferred generators for $\pi_1(\partial E(K))$, namely $\langle \mu,\lambda \rangle$ where $\mu$ is a meridian of the knot $K$ and $\lambda$ a longitude (chosen to be null-homologous in $E(K)$). Elements of $\Gam$ conjugate into $\langle \mu,\lambda \rangle$ are called \emph{peripheral elements}. The Alexander polynomial of $K$ will be denoted by $\Delta_K(t)$. Recall from the discussion in the introduction that the Alexander polynomial is a generator of the Fitting ideal for the conjugation action of a meridian on the commutator subgroup of the knot group. See \cite{Neuwirth} for further background on the Alexander polynomial sufficient to understand the results in this paper.

For much of the rest of this paper we will be interested in character varieties of hyperbolic knot complements, and in particular their canonical components. We point out that it is known from \cite{KM2} that if $K$ is \emph{any} non-trivial knot, then the character variety contains a curve of characters of irreducible representations. However, our focus is on hyperbolic knots.

Thus, let $K\subset S^3$ be a hyperbolic knot. As remarked upon in \S \ref{sec:BasicX}, characters of reducible representations of $\Gam=\pi_1(S^3\ssm K)$ form an algebraic subset of $X(K)$. Denote this subset by $X_R(K)$. We will use the following fact.

\begin{lem}\label{finitelymanyreducible}
Let $\mathfrak{C} \subset X(K)$ be the canonical component over the complex numbers. In the above notation, $X_R(K)\cap \mathfrak{C}$ can consist of only finitely many points. 
\end{lem}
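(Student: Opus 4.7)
The plan is to exploit the characterization of reducible characters via traces on the commutator subgroup, and then cut $C$ with a single well-chosen regular function whose zero locus is automatically finite because the function is non-constant.

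First, by Lemma \ref{trace2} together with its converse (Remark \ref{rem:reducible}), a character $\chi_\rho \in X(K)$ is reducible if and only if $I_c(\chi_\rho) = 2$ for every $c \in [\Gamma,\Gamma]$. Since $\lambda$ is null-homologous in $E(K)$, the longitude lies in $[\Gamma,\Gamma]$. Consequently,
\[
X_R(K) \cap C \ \subseteq\ \{\chi \in C : I_\lambda(\chi) = 2\}.
\]
The function $I_\lambda - 2$ is a regular function on the irreducible affine curve $C$; as long as it is not identically zero, its vanishing locus is a proper closed subset of $C$, hence finite. So it suffices to show that $I_\lambda$ is not identically $2$ on $C$.

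The key step, and the main (mild) obstacle, is non-constancy of $I_\lambda$. I would deduce this from Thurston's hyperbolic Dehn surgery theorem: at the discrete faithful character $\chi_0 \in C$, the element $\rho_0(\lambda)$ is parabolic, so $I_\lambda(\chi_0) = \pm 2$, but arbitrarily close to $\chi_0$ in $C$ there are characters $\chi_\rho$ coming from hyperbolic $(p,q)$-Dehn fillings on $M$. For these filled manifolds, a relation of the form $\rho(\mu^p \lambda^q) = I$ forces $\rho(\lambda)$ to be loxodromic for $(p,q)$ with $|p|+|q|$ sufficiently large, and in particular $I_\lambda(\chi_\rho) \neq \pm 2$. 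Hence $I_\lambda$ is non-constant on $C$. (Alternatively: if $I_\lambda \equiv 2$ on $C$ then $P_C(\lambda)$ would have trace $2$ identically, so $P_C(\lambda)$ is either the identity or unipotent; the former contradicts Lemma \ref{taut_faithful}, and the latter, combined with the cusp relation $[\mu,\lambda]=1$, forces $P_C(\mu)$ and $P_C(\lambda)$ to share a common fixed point on $\mathbb{P}^1$ over the algebraic closure for every character in $C$, which is incompatible with the existence of nearby loxodromic Dehn surgery characters.)

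Combining these two steps, $X_R(K) \cap C$ is contained in the finite set of zeros of the non-zero regular function $I_\lambda - 2$ on $C$, which gives the lemma.
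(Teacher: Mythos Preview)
Your proof is correct and follows essentially the same route as the paper: contain $X_R(K)\cap C$ in $\{I_\lambda=2\}$ using $\lambda\in[\Gamma,\Gamma]$ and Lemma~\ref{trace2}, then argue that $I_\lambda$ is non-constant on $C$. The paper obtains non-constancy by citing \cite[Prop.~1.1.1]{CGLS} (non-constancy of $I_\alpha$ for every non-trivial peripheral $\alpha$), whereas you unpack this via Thurston's hyperbolic Dehn surgery theorem; your argument that $\rho(\lambda)$ is loxodromic for hyperbolic $(p,q)$-fillings with $p\neq 0$ is exactly the content behind that citation.

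Two minor remarks. First, you only need the forward direction of Lemma~\ref{trace2} for the containment; invoking the converse (Remark~\ref{rem:reducible}) is harmless but unnecessary. Second, your parenthetical ``alternative'' argument appeals to Lemma~\ref{taut_faithful}, but in this paper the proof of Lemma~\ref{taut_faithful} itself invokes the non-constancy fact established in the proof of Lemma~\ref{finitelymanyreducible}, so that route would be circular as the paper is organized. Your primary Dehn surgery argument avoids this and stands on its own.
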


\begin{proof}
As above, let $\lambda$ denote a longitude of $K$. Since $\lambda \in [\Gam,\Gam]$, Lemma \ref{trace2} shows that $I_\lambda(\chi_\rho)=2$ for any reducible representation $\rho$. Since $\mathfrak{C}$ is a curve, if $\mathfrak{C}$ contained the characters of infinitely many reducible representations it would follow that $I_\lambda(\chi_\rho)=2$ for all $\chi_\rho\in \mathfrak{C}$. However, this is impossible, since the functions $I_\alpha$ are non-constant for all non-trivial peripheral elements $\alpha$ by \cite[Prop.\ 1.1.1]{CGLS}.
\end{proof}

We can say more about the finitely many characters of reducible representations that lie on $\mathfrak{C}$. The following can be found in \cite[\S 6]{CCGLS}.

\begin{prop}\label{CCGLS}
Let $K\subset S^3$ be a hyperbolic knot with Alexander polynomial $\Del_K(t)$ and let $\mathfrak{C} \subset X(K)$ be a canonical component. If $\rho:\Gam\rightarrow \SL_2(\C)$ is a reducible representation with $\chi_\rho\in \mathfrak{C}$, then the following hold.
\begin{enumerate}

\item There is a representation $\rho'$ with non-abelian image such that $\chi_\rho = \chi_{\rho'}\in \mathfrak{C}$.

\item If $\mu$ is a meridian of $K$, then $\rho(\mu)$ has an eigenvalue $z$ for which $z^2$ is a root of $\Delta_K(t)$.

\end{enumerate}
\end{prop}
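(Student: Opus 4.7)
The plan is to combine a specialization argument on the canonical component with the classical Burde--de Rham theorem linking non-abelian reducible $\SL_2(\C)$ representations of knot groups to roots of the Alexander polynomial.

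For part (1), I use the tautological representation $P_C : \Gam \to \SL_2(F)$ from Lemma \ref{lem:BuildTautological}. By Lemma \ref{finitelymanyreducible} the set $X_R(K) \cap C$ of reducible characters on $C$ is finite, while Lemma \ref{taut_irred} gives that $P_C$ is absolutely irreducible. Fix a point $p \in \wt{D}$ with $\widetilde{t}(p) = \chi_\rho$, let $\mathcal{O}_p$ be the associated DVR with uniformizer $u$, and extend $F$ to contain the eigenvalues of $P_C(\mu)$ so that, after conjugation, $P_C(\mu) = \mathrm{diag}(z(u), z(u)^{-1})$. A further diagonal conjugation over $\mathcal{O}_p$ following the classical Culler--Shalen tree-theoretic argument arranges all matrix entries of every $P_C(\gam)$ to lie in $\mathcal{O}_p$. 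Because $\chi_\rho$ is a reducible character, the specialization $P_C(\Gam) \bmod \mathfrak{m}_p$ has a common invariant line, which must be a coordinate axis; without loss of generality it is $(1,0)^T$, so all lower-left entries $c_\gam$ lie in $\mathfrak{m}_p$. If some upper-right entry $b_\gam$ is already a unit at $p$, the specialization is a non-abelian reducible $\rho'$ as required. Otherwise every $b_\gam \in \mathfrak{m}_p$, and setting $m = \min_\gam \ord_p(b_\gam) \geq 1$, conjugation of $P_C$ by $\mathrm{diag}(u^{-m}, 1)$ preserves $P_C(\mu)$, keeps all entries in $\mathcal{O}_p$, and produces a specialization that is still upper-triangular (since each $c'_\gam = u^m c_\gam \in \mathfrak{m}_p$) but now has at least one non-zero $b$-entry at $p$, hence is non-abelian. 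This specialization is the desired $\rho'$.

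For part (2), write $\rho'(\gam) = \begin{pmatrix} \alpha(\gam) & c(\gam) \\ 0 & \alpha(\gam)^{-1} \end{pmatrix}$. The diagonal character $\alpha : \Gam \to \C^\times$ factors through $H_1(S^3 \ssm K, \Z) = \Z \langle \mu \rangle$, so $\alpha$ is determined by $z := \alpha(\mu)$, an eigenvalue of $\rho(\mu) = \rho'(\mu)$. The off-diagonal term $c$ defines a twisted $1$-cocycle for $\Gam$ with values in $\C$ equipped with a $\Gam$-action by a power of $\alpha$, and non-abelianness of $\rho'$ means precisely that this cocycle is not a coboundary. By Fox calculus applied to a Wirtinger presentation of $\Gam$, this is the classical Burde--de Rham result: the relevant cohomology group is non-zero if and only if $z^2$ is a root of the Alexander polynomial $\Delta_K(t)$.

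The main obstacle is the rescaling step in part (1), namely ensuring the minimum $m = \min_\gam \ord_p(b_\gam)$ is finite and that the resulting specialization does not degenerate. The finiteness of $m$ is where absolute irreducibility of $P_C$ (Lemma \ref{taut_irred}) becomes indispensable: if all $b_\gam$ vanished identically on $\wt{D}$, then $P_C$ itself would be upper-triangular over the generic point, contradicting its irreducibility. Once the non-abelian reducible $\rho'$ has been extracted in part (1), part (2) reduces to invoking the classical knot-theoretic calculation, which is standard in the literature.
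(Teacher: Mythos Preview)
Your argument for part (2) via the Burde--de Rham cocycle computation is the same as the paper's approach.

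For part (1), however, the paper takes a different and shorter route: a dimension count. If $\chi$ is the character of an irreducible representation then $t^{-1}(\chi)$ is a single $\PGL_2(\C)$-orbit, hence $3$-dimensional; on the other hand, abelian representations with a given character factor through $H_1(\Gam;\Z)\cong\Z$ and are therefore parametrized by a single conjugacy class in $\SL_2(\C)$, which is at most $2$-dimensional. Taking an irreducible component of $t^{-1}(C)$ dominating $C$, upper-semicontinuity of fiber dimension forces the fiber over the reducible character $\chi_\rho$ to have dimension at least $3$, so it cannot consist entirely of abelian representations.

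Your specialization-and-rescaling strategy is a reasonable alternative, but as written it has a gap at the inference ``hence is non-abelian.'' You are implicitly using that $\sigma(\mu)=\mathrm{diag}(z(p),z(p)^{-1})$ has \emph{distinct} diagonal entries, so that any non-diagonal element of the specialization fails to commute with $\sigma(\mu)$. When $\chi_\rho(\mu)=\pm 2$, i.e.\ $z(p)=\pm 1$, the matrix $\sigma(\mu)=\pm I$ is central, and an upper-triangular specialization with nonzero $b$-entries can perfectly well be abelian (indeed must be, by the computation in the first paragraph of the proof of Corollary~\ref{nounipotent}). The same issue afflicts your earlier step: the ``diagonal conjugation'' putting all entries into $\mathcal{O}_p$ amounts to finding a vertex of the Bruhat--Tits tree fixed by $P_C(\Gamma)$ lying on the standard apartment, and this is only automatic when the fixed-point set of $P_C(\mu)$ \emph{equals} that apartment, which again requires $z(p)\neq\pm 1$. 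You cannot invoke Corollary~\ref{nounipotent} to exclude this parabolic case, since its proof uses part (1). The dimension argument handles the edge case for free: if $\chi_\rho$ were parabolic then every representation with that character would be abelian, so the fiber would have dimension at most $2$, contradicting the lower bound of $3$.
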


In fact all one needs for Proposition \ref{CCGLS}(1) to hold is for $\mathfrak{C}$ to contain the character of an irreducible representation. The key point is that if $t:R(K)\rightarrow X(K)$ is the map from the representation variety to the character variety defined in \S \ref{sec:BasicX} and if $\chi$ is the character of an irreducible representation, then $t^{-1}(\chi)$ is $3$-dimensional. On the other hand, if $\chi_\rho$ is the character of an abelian representation then $t^{-1}(\chi_\rho)$ is $2$-dimensional.

To prove part (2) of Proposition \ref{CCGLS}, it is shown in \S 6 of \cite{CCGLS} (following de Rham \cite{deRham}) that if $\mu_1,\ldots ,\mu_n$ is a collection of meridional generators for $\Gamma$, then the non-abelian representation $\rho'$ stated in Proposition \ref{CCGLS} can be described as follows. Recall that meridians in the knot group are are all conjugate, and hence have the same character values for all representations. Given this, there exist $w\in \C$ and $t_i\in \C$ for $i=1,\ldots ,n$ so that
\[
\rho'(\mu_i) = \begin{pmatrix} w & t_i\\ & \\ 0 & w^{-1} \end{pmatrix}.
\]
One then shows using the action of $\Gam$ on its commutator subgroup (e.g., see \cite[Ch.\ IV]{Neuwirth}) that $w^2=z$ is a root of $\Delta_K(t)$.

One consequence of Proposition \ref{CCGLS} is the following, where a \emph{parabolic representation} means a non-trivial representation $\rho:\Gam\rightarrow \SL_2(\C)$ all of whose non-trivial elements are parabolic. Note that this is equivalent to the statement that $\rho$ is a non-trivial representation for which $\chi_\rho(\gamma)=\pm 2$ for all $\gamma\in\Gamma$.

\begin{cor}\label{nounipotent}
In the above notation, $\mathfrak{C}$ does not contain the character of a parabolic representation.
\end{cor}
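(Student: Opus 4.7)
The plan is to assume for contradiction that there is a parabolic representation $\rho$ with $\chi_\rho \in C$, so that $\chi_\rho(\gamma) \in \{\pm 2\}$ for every $\gamma \in \Gamma$, and to derive a contradiction with the classical normalization of the Alexander polynomial at $t = 1$.

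The first step is to show that $\chi_\rho$ must be the character of a reducible representation. For any $g, h \in \Gamma$, the standard $\SL_2$ trace identity reads
\[
\chi_\rho([g,h]) - 2 \;=\; \chi_\rho(g)^2 + \chi_\rho(h)^2 + \chi_\rho(gh)^2 - \chi_\rho(g)\chi_\rho(h)\chi_\rho(gh) - 4.
\]
Writing $\chi_\rho(g) = 2\epsilon_1$, $\chi_\rho(h) = 2\epsilon_2$, $\chi_\rho(gh) = 2\epsilon_3$ with each $\epsilon_i \in \{\pm 1\}$, the right-hand side collapses to $8 - 8\epsilon_1\epsilon_2\epsilon_3$, which is either $0$ or $16$. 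The value $16$ would give $\chi_\rho([g,h]) = 18$, contradicting the parabolic hypothesis that all traces lie in $\{\pm 2\}$, so necessarily $\epsilon_1\epsilon_2\epsilon_3 = 1$ and $\chi_\rho([g,h]) = 2$ for every commutator. Remark \ref{rem:reducible} (the converse of Lemma \ref{trace2}) then forces $\chi_\rho$ to be the character of a reducible representation.

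The second step is to apply Proposition \ref{CCGLS} to this reducible character on the canonical component. Part (1) produces a non-abelian $\rho'$ with $\chi_{\rho'} = \chi_\rho$, and part (2) says $\rho'(\mu)$ has an eigenvalue $z$ such that $z^2$ is a root of $\Delta_K(t)$. But $\chi_{\rho'}(\mu) = \chi_\rho(\mu) = \pm 2$ forces the eigenvalues of $\rho'(\mu)$ to be a repeated $\pm 1$, so $z^2 = 1$. Hence $1$ would be a root of $\Delta_K(t)$, contradicting the classical fact that $|\Delta_K(1)| = 1$ for every knot in $S^3$.

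I do not foresee a significant obstacle: the first step is a short trace computation, and the second is an immediate combination of Proposition \ref{CCGLS} with the standard evaluation of the Alexander polynomial. The conceptual content is that Proposition \ref{CCGLS} translates the representation-theoretic constraint (all traces equal to $\pm 2$) into an arithmetic constraint (existence of a root of $\Delta_K(t)$ at $1$), where the classical normalization provides the contradiction.
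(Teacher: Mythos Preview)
Your proof is correct and takes a genuinely different route from the paper's argument. Both proofs begin by establishing that the parabolic character $\chi_\rho$ must be reducible, but they do so differently and then finish differently.

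For the first step, the paper performs an explicit matrix computation: it assumes two meridians $a,b$ have non-commuting parabolic images, normalizes them to upper- and lower-triangular unipotents, and checks the cases $\tr(\rho([a,b])) = \pm 2$ directly to force a contradiction, concluding that any parabolic representation is in fact abelian (hence reducible). Your argument replaces this matrix calculation with the trace identity for $\chi_\rho([g,h])$, which is cleaner and avoids case analysis. (One small remark: you only verify $\chi_\rho(c)=2$ for \emph{simple} commutators $c=[g,h]$, whereas Remark~\ref{rem:reducible} is stated for all of $[\Gamma,\Gamma]$; but this suffices, since the underlying Culler--Shalen criterion is precisely that $\rho$ is reducible iff $\tr(\rho([g,h]))=2$ for every pair $g,h$.)

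For the second step the approaches diverge more substantially. The paper invokes only Proposition~\ref{CCGLS}(1): it produces a non-abelian $\rho'$ with the same character, observes that $\rho'$ is then also parabolic, and re-applies the ``parabolic $\Rightarrow$ abelian'' conclusion to contradict non-abelianness of $\rho'$. Your argument instead uses Proposition~\ref{CCGLS}(2) to pass to the Alexander polynomial: the meridian eigenvalue must be $\pm 1$, so $\Delta_K(1)=0$, contradicting $|\Delta_K(1)|=1$. (Strictly, part~(2) is stated for $\rho(\mu)$ rather than $\rho'(\mu)$, but since $\chi_\rho=\chi_{\rho'}$ the eigenvalues agree, and indeed you do not need part~(1) at all.) Your route has the pleasant feature of foreshadowing the paper's theme that obstructions at reducible characters are governed by arithmetic of $\Delta_K(t)$; the paper's route is more self-contained, needing no outside fact about the Alexander polynomial.
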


\begin{proof}
We first show that if $\rho$ is a parabolic representation, then $\rho$ is abelian. To see see this, suppose that $a$ and $b$ are distinct meridians of $K$ for which $\rho(a)$ and $\rho(b)$ do not commute. The parabolic assumption allows us to conjugate $\rho$ such that
\[
\rho(a) = \begin{pmatrix} 1 & x \\ & \\ 0 & 1 \end{pmatrix}~\hbox{and}~\rho(b) = \begin{pmatrix} 1 & 0 \\ & \\ y & 1 \end{pmatrix}.
\]
Then $\tr(\rho([a,b])) = 2+x^2y^2$. We assumed this commutator is non-trivial and it is parabolic, and hence it has trace $\pm 2$.

When the trace is $2$, one of $x$ or $y$ is $0$, i.e., one of $\rho(a)$ or $\rho(b)$ is the identity. This contradicts the assumption that $\rho(a)$ and $\rho(b)$ do not commute. When the trace is $-2$, we have $x^2y^2=-4$. Conjugating by a diagonal matrix so $x = 1$, it follows that the product $\rho(ab)$ then has trace $2 \pm 2 i$, and hence is not parabolic, which is again a contradiction. Therefore, under any parabolic representation we deduce that all meridians must map to a common parabolic subgroup of $\SL_2(\C)$, and it follows that the image is abelian as required.

An abelian representation is reducible, so Proposition \ref{CCGLS} implies that there is a non-abelian representation $\rho'$ with $\chi_{\rho'}\in C$ and $\chi_\rho=\chi_{\rho'}$. Thus $\chi_{\rho'}(\gamma)=\pm 2$ for all $\gamma\in \Gamma$, i.e., $\rho'$ is also a parabolic representation, and hence $\rho'$ is abelian. This contradiction proves the corollary.
\end{proof}

We record the following refinement of Lemma \ref{non-commuting} that will be helpful in the case where $\Gamma = \pi_1(S^3 \ssm K)$ for $K$ a hyperbolic knot.

\begin{lem}\label{meridian_non-commuting}
Let $g$ and $h$ be distinct meridians of $K$ and $\chi_\rho\in \mathfrak{C}$ such that the restriction of $\rho$ to $\langle g, h \rangle$ is infinite and irreducible for some (hence any) representation $\rho$ with character $\chi_\rho$. If $\chi_\rho([g,h])=\pm2$, then $\rho([g,h])$ is a non-trivial parabolic element of $\SL_2(\C)$.
\end{lem}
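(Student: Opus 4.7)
The plan is quite short. Since $\mathrm{tr}(\rho([g,h])) = \chi_\rho([g,h]) = 2$, the element $\rho([g,h])$ has both eigenvalues equal to $1$ and is therefore either the identity matrix or a non-trivial unipotent (equivalently, non-trivial parabolic) element. So the entire content of the lemma is to rule out $\rho([g,h]) = I$.

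To do this, I would argue by contradiction: assume $\rho([g,h]) = I$, so that $\rho(g)$ and $\rho(h)$ commute in $\SL_2(\C)$. The key linear algebra fact I would invoke is that any abelian subgroup of $\SL_2(\C)$ is reducible — every element of $\SL_2(\C)$ is either $\pm I$, diagonalizable (in which case its centralizer is a torus conjugate to the diagonal subgroup), or a non-trivial unipotent element (in which case its centralizer lies in a Borel subgroup consisting of upper-triangular matrices in a suitable basis). In each case a commuting pair has a common eigenvector, so $\langle \rho(g),\rho(h)\rangle$ is simultaneously triangularizable and hence reducible. This directly contradicts the hypothesis that $\rho$ restricted to $\langle g,h\rangle$ is irreducible.

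Thus $\rho([g,h]) \neq I$, and together with the trace computation we conclude that $\rho([g,h])$ is a non-trivial parabolic element of $\SL_2(\C)$, as claimed. The hypothesis that the restriction to $\langle g,h\rangle$ has infinite image is not strictly needed for this dichotomy, but is consistent with ensuring $\rho(g), \rho(h)$ themselves are not both central (which is already implicit in irreducibility of the restriction).

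There is essentially no hard step here; the only thing to be careful about is invoking the correct version of the statement ``commuting pairs in $\SL_2(\C)$ generate reducible subgroups,'' which is standard but worth stating cleanly. The fact that $g$ and $h$ are distinct meridians (so $\rho(g)$ and $\rho(h)$ have equal trace) plays no explicit role in the argument; it is relevant only to the setting in which the lemma will be applied.
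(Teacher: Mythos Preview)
Your proof is correct and is cleaner than the paper's. The paper proceeds by an explicit matrix computation: it puts $\rho(g)$ and $\rho(h)$ into a specific normal form
\[
\rho(g)=\begin{pmatrix} u & 1 \\ 0 & u^{-1}\end{pmatrix},\qquad
\rho(h)=\begin{pmatrix} u & 0 \\ z & u^{-1}\end{pmatrix},
\]
computes $\rho([g,h])$ directly, and shows that $\rho([g,h])=I$ forces $z=0$ and $u=\pm1$, whence $\rho(\Gam)$ is trivial or of order $2$ (using that $\Gam$ is normally generated by a meridian), contradicting irreducibility. The paper also separately treats the possibility $\rho([g,h])=-I$, which is in fact irrelevant under the hypothesis $\chi_\rho([g,h])=2$; in that case the image of $\langle g,h\rangle$ is shown to be finite, contradicting the ``infinite'' hypothesis.

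Your argument replaces all of this with the single observation that commuting elements of $\SL_2(\C)$ have a common eigenvector, hence generate a reducible subgroup. This is both shorter and shows, as you note, that neither the meridian hypothesis nor the infinite-image hypothesis is actually needed for the conclusion as stated. The paper's computation, on the other hand, exploits those extra hypotheses (conjugate meridians have equal trace; normal generation by a meridian; infinite image), which explains why they appear in the statement even though your route makes no use of them.
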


\begin{proof}
Since $g$ and $h$ are meridians, they are conjugate in $\Gam$. In particular, $\chi_\rho(g) = \chi_\rho(h)$ for any $\rho \in R(\Gam)$. Suppose that $\chi_\rho([g,h]) = \pm 2$ but $\rho([g,h])$ is not parabolic, in which case either $\rho(g)$ and $\rho(h)$ commute or $\rho([g,h])$ is the negative of the identity matrix.

If $\rho(g)$ and $\rho(h)$ commute, we can conjugate $\rho$ such that
\begin{align*}
\rho(g) &= \begin{pmatrix} u & 1 \\ 0 & u^{-1} \end{pmatrix} \\
\rho(h) &= \begin{pmatrix} u & 0 \\ z & u^{-1} \end{pmatrix}
\end{align*}
One can then explicitly calculate $\rho([g,h])$ and see that the commutator is trivial if and only if $z = 0$ and $u = \pm 1$. However, $\Gam$ is normally generated by $h$, so the image of $\rho$ is either trivial or order $2$, and hence is not irreducible, which is a contradiction.

When $\rho([g,h])$ is the negative of the identity matrix, we similarly see that $z = 2$ and $u = \pm i$. This is conjugate to the representation
\begin{align*}
\rho'(g) &= \begin{pmatrix} i & 0 \\ 0 & -i \end{pmatrix} \\
\rho'(h) &= \begin{pmatrix} 0 & -1 \\ 1 & 0 \end{pmatrix},
\end{align*}
where $\langle g, h \rangle$ visibly has finite image. This contradiction completes the proof of the lemma.
\end{proof}

\section{Azumaya quaternion algebras and Brauer groups of Curves}\label{sec:Azumaya}

In this section we recall some material concerning Azumaya algebras and Brauer groups of curves. Most of what we discuss is contained in Milne \cite{Mil}.

\subsection{}\label{sec:AzumayaDef}

Informally an Azumaya algebra is a generalization of a central simple algebra over a field $k$. To make this notion precise, we begin with the setting of an Azumaya algebra over a commutative local ring $R$ with residue field $k$. An algebra $A$ over $R$ is an \emph{Azumaya algebra} if $A$ is free of finite rank $r \geq 1$ as an $R$-module and $A\otimes k$ is a central simple algebra over $k$.

To define an Azumaya algebra over a curve we recall some additional terminology. Let $X$ be a Noetherian scheme. Recall that the \emph{structure sheaf} of $X$ is the sheaf of rings $\mathcal{O}_X$ such that for any open subset $U\subset X$, $\mathcal{O}_X(U)$ is the ring of regular functions on $U$. For each point $x\in X$, the \emph{stalk} of $\mathcal{O}_{X}$, denoted $\mathcal{O}_{X,x}$, is its local ring, i.e., the direct limit of $\mathcal{O}_X(U)$ over all open sets $U$ containing $x$. We denote the residue class field of $\mathcal{O}_{X,x}$ by $k(x)$.

A coherent sheaf of $\mathcal{O}_X$ modules $\mathcal{F}$ is a sheaf of abelian groups on $X$ such that, for any open subset $U\subset X$, $\mathcal{F}(U)$ is a finitely generated module over $\mathcal{O}_X(U)$ for which the module structure is compatible with restriction maps. The stalk of $\mathcal{F}$ at a point $x\in X$, denoted $\mathcal{F}_x$, is the direct limit of $\mathcal{F}(U)$ over those open sets $U$ containing $x$. One says that $\mathcal{F}$ is locally free if $\mathcal{F}_x$ is finitely generated and free over $\mathcal{O}_{X,x}$ for all $x \in X$.

An \emph{Azumaya algebra} $\mathcal{A}$ on $X$ is a locally free sheaf of $\mathcal{O}_X$ algebras such that $\mathcal{A}_x$ is an Azumaya algebra over the local ring $\mathcal{O}_{X,x}$ for every $x \in X$. Of particular interest to us are \emph{quaternion Azumaya algebras}, i.e., Azumaya algebras that are rank $4$ as locally free $\mathcal{O}_X$-modules.

\subsection{}\label{sec:BrauerGp}

Two Azumaya algebras $\mathcal{A}$ and $\mathcal{B}$ are \emph{equivalent} if there exist locally free sheaves of $\mathcal{O}_X$-modules $\mathcal{E}$ and $\mathcal{F}$ such that
\[
\mathcal{A}\otimes_{\mathcal{O}_X}\End_{\mathcal{O}_X}(\mathcal{E}) \cong \mathcal{B}\otimes_{\mathcal{O}_X}\End_{\mathcal{O}_X}(\mathcal{F}),
\]
where $\End_{\mathcal{O}_X}(\mathcal{H})$ is the sheaf of $\mathcal{O}_X$-module endomorphisms of an $\mathcal{O}_X$ module $\mathcal{H}$. This is an equivalence relation, and the group of equivalence classes of Azumaya algebras is called the \emph{Brauer group} of $X$, denoted by $\Br(X)$.

We now recall some basic results concerning Azumaya algebras and $\Br(X)$. For simplicity we restrict to $X$ that have properties of the kind that arise in our applications. For an abelian group $D$ and $n \ge 1$ in $\mathbb{Z}$, let $D[n]$ be the subgroup of elements with order dividing $n$. The following is an encyclopedia of classical facts about Azumaya algebras.

\begin{thm}\label{thm:encyclo}
Suppose $X$ is a regular integral scheme of dimension at most $2$ with function field $K$ that is quasi-projective over a field or a Dedekind ring.
\begin{enumerate}

\item Descent theory gives a bijection between the set of isomorphism classes of Azumaya algebras $A_X$ of rank $n^2$ over $X$ and the elements of the \'etale \v{C}ech cohomology group $\hat{H}_{\textrm{\'{e}t}}^1(X, \PGL_n)$. Similarly, there is an isomorphism between isomorphism classes of rank $n$ locally free $O_X$-modules $E$ and elements of $\hat{H}_{\textrm{\'{e}t}}^1(X, \GL_n)$.

\item There is an exact sequence of \'etale sheaves of groups
\[
1 \to \mathbb{G}_m \to \GL_n \to \PGL_n \to 1
\]
on $X$. The cohomology of this sequence gives an exact sequence
\[
\hat{H}_{\textrm{\'{e}t}}^1(X, \GL_n) \to \hat{H}_{\textrm{\'{e}t}}^1(X, \PGL_n) \to \hat{H}_{\textrm{\'{e}t}}^2(X,\mathbb{G}_m)
\]
where
\[
\hat{H}_{\textrm{\'{e}t}}^2(X,\mathbb{G}_m) = H^2(X,\mathbb{G}_m) = \Br(X).
\]
With the notation of part (1), the isomorphism class of $E$ in $\hat{H}_{\textrm{\'{e}t}}^1(X, \GL_n)$ is identified in the above sequence with the class of $\End_{O_X}(E)$ in $\hat{H}_{\textrm{\'{e}t}}^1(X, \PGL_n)$. The isomorphism class of $A_X$ in $\hat{H}_{\textrm{\'{e}t}}^1(X, \PGL_n) $ is sent to the class $[A_X]$ of $A_X$ in $\Br(X)$.

\item Every element of $\Br(X)$ has finite order.

\item If $c$ is a class in $H^2(X,\mathbb{G}_m)$ of order $n$, then $c$ is represented by an Azumaya algebra $\mathcal{A}$ of rank $n^2$ over $X$.

\item The natural homomorphism $\Br(X) \to \Br(K)$ is injective. An Azumaya algebra $A_K$ over $K$ is determined up to isomorphism by its image in $\Br(K)$.

\item \label{AZOK} Let $x$ be a codimension one point of $X$, so $R = O_{X,x}$ is a discrete valuation ring with fraction field $K$. One says that an Azumaya algebra $A_K$ over $K$ extends over $x$ when there is an Azumaya algebra $\mathcal{A}_{R}$ over $R$ such that $A_K$ is isomorphic to $\mathcal{A}_R \otimes_R K$. This is the case for all codimension one points $x$ of $X$ if and only if $A_K$ extends to an Azumaya algebra $\mathcal{A}$ over $X$.

\item Suppose that $K$ has characteristic not equal to $2$. Every quaternion Azumaya algebra $A_K$ over $K$ is of the form
\[
A_K = \mathrm{Span}_K[1, I, J, IJ],
\]
where $I$ and $J$ are indeterminants for which there exist $\alpha, \beta \in K^*$ such that $I^2 = \alpha$, $J^2 = \beta$, and $IJ = -JI$. In other words, $A_K$ is the algebra with Hilbert symbol
\begin{align*}
\left(\frac{\alpha,\beta}{K}\right) \in H^2(\mathrm{Spec}(K),\{\pm 1\}) &= H^2(\mathrm{Spec}(K),\mathbb{G}_m)[2] \\
&= \Br(K)[2].
\end{align*}

\item Let $x$ be a codimension one point of $X$ and let $A_K$ be a quaternion Azumaya algebra over $K$. The tame symbol $\{\alpha, \beta\}_x$ of $A_K$ at $x$ is the class of
\[
(-1)^{\ord_x(\alpha) \ord_x(\beta)} \beta^{\ord_x(\alpha)}/\alpha^{\ord_x(\beta)}
\]
in $k(x)^*/(k(x)^*)^2$. If $k(x)$ has characteristic different from $2$, this symbol is trivial if and only if $A_K$ extends over $x$.

\end{enumerate}
\end{thm}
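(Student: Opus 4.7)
Since this theorem is an omnibus of classical results about Azumaya algebras and the Brauer group on a regular scheme $X$ of dimension at most $2$, my plan is to deduce each part from the standard development via \'{e}tale cohomology, following Milne's \emph{\'{E}tale Cohomology} (Chapter IV). For parts (1) and (2), I would invoke descent for the \'{e}tale topology: since the automorphism sheaf of $M_n(\mathcal{O}_X)$ is $\PGL_n$, twisted forms of $M_n(\mathcal{O}_X)$, which are exactly the Azumaya algebras of rank $n^2$ on $X$, are classified by $\hat{H}^1_{\textrm{\'{e}t}}(X, \PGL_n)$; similarly, rank-$n$ locally free $\mathcal{O}_X$-modules are classified by $\hat{H}^1_{\textrm{\'{e}t}}(X, \GL_n)$. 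The short exact sequence $1 \to \mathbb{G}_m \to \GL_n \to \PGL_n \to 1$ of \'{e}tale sheaves yields the long non-abelian cohomology sequence of part (2), and the identification $\hat{H}^2_{\textrm{\'{e}t}}(X, \mathbb{G}_m) = H^2(X, \mathbb{G}_m) = \Br(X)$ is standard for such $X$. Compatibility of the connecting map with the $\mathrm{End}$ construction is read off directly from cocycles.

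For parts (5) and (6) I would appeal to Auslander--Goldman and its scheme-theoretic extension by Grothendieck: on a regular Noetherian scheme of dimension at most $2$, the natural map $\Br(X) \to \Br(K)$ is injective, and uniqueness of an Azumaya representative of a given Brauer class then follows from Skolem--Noether. The criterion in (6) is local and reduces to the statement that a Brauer class over $K$ lies in the image of $\Br(R)$ for a discrete valuation ring $R$ with fraction field $K$ if and only if it is unramified, together with the purity theorem for the Brauer group on regular $2$-dimensional schemes, which implies that a class unramified at every codimension-one point extends to $X$. Parts (7) and (8) are classical quaternion algebra facts under the residue-characteristic-$\neq 2$ hypothesis: in (7) the canonical involution on a rank-$4$ central simple algebra yields generators $I, J$ with $I^2 = \alpha \in K^*$, $J^2 = \beta \in K^*$, and $IJ = -JI$, identifying the Brauer class with the cup product of the Kummer classes of $\alpha$ and $\beta$; in (8) the asserted formula for the tame symbol is a direct calculation from this Kummer description, and its triviality characterizes unramifiedness in residue characteristic $\neq 2$ via the purity statement used in (6).

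The only substantively deep input, and the main obstacle to a self-contained proof, is assertion (4): that every $n$-torsion cohomological Brauer class is represented by an Azumaya algebra of rank $n^2$. For schemes quasi-projective over a field or Dedekind ring and of dimension at most $2$, this follows from theorems of Gabber (unpublished) and de Jong, who showed that the cohomological and Azumaya Brauer groups coincide for quasi-projective schemes admitting an ample line bundle. Part (3) then follows at once from part (5), since $\Br(K)$ is torsion for any field $K$. I plan to cite these sources for (4) rather than reproduce the arguments, so that with (4) in hand all remaining parts reduce to classical bookkeeping and a careful application of the quasi-projectivity and regularity hypotheses on $X$.
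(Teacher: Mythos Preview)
Your proposal is correct and takes essentially the same approach as the paper: both treat the theorem as an encyclopedia of classical facts to be cited rather than proved, with Milne's \emph{\'Etale Cohomology} as the primary source. The paper's own proof is in fact terser than yours---it simply points to Thm.~IV.2.5, Thm.~III.2.17, Prop.~IV.2.7, Thm.~IV.2.16, Cor.~IV.2.6, and Remark~IV.2.18(b) of Milne for (1)--(6), to Bourbaki for (7), and to Colliot-Th\'el\`ene et al.\ for (8); note in particular that for (4) the dimension-$\le 2$ hypothesis lets one cite Milne~IV.2.16 directly rather than the more general Gabber/de Jong results.
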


\begin{proof}
For statements (1)-(6), see Thm.\ IV.2.5, Thm.\ III.2.17, Prop.\ IV.2.7, Thm.\ IV.2.16, Cor.\ IV.2.6, and Remark IV.2.18(b) in \cite{Mil}, respectively. Statement (7) is shown in \cite[Prop.\ 4, \S 19.3]{Bourbaki}. Statement (8) is proven in the first four paragraphs of \cite[\S2]{ColliotEtAl}.
\end{proof}

\subsection{The proof of Theorem \ref{thm:FirstMain}}\label{sec:Harari}

We now give the proof of Theorem \ref{thm:FirstMain}. First, we recall our assumptions. Let $\Gam$ be a finitely generated group. Suppose that $k$ is a number field realized as a subfield of $\C$ under a fixed embedding and that $C$ is a geometrically irreducible curve over $k$ that is a closed subscheme of $X(\Gamma)_k = X(\Gam)_\Q \otimes_\Q k$ such that $C(\C)$ contains the character of an irreducible representation. As in the statement of the theorem, $C^\sharp$ denotes the normalization of $C$ and $\wt{C}$ is the unique smooth projective curve over $k$ birational to $C$. Let $k(C)$ be the common function field of $C$, $C^\sharp$, and $\wt{C}$.

From Lemma \ref{non-commuting} and Corollary \ref{hilberttaut} we can find elements $g, h \in \Gam$ such that the canonical quaternion algebra $A_{k(C)}$ over $k(C)$ is well defined and has Hilbert symbol
\begin{equation}
\label{eq:nicely}
\left( \frac{I_g^2 - 4\, ,\, I_{[g,h]}-2}{k(C)} \right).
\end{equation}
This gives Theorem \ref{thm:FirstMain}(1). Theorem \ref{thm:FirstMain}(2) follows from Lemma \ref{lem:ResidueFieldTraceField}.

To prove part (3) of Theorem \ref{thm:FirstMain}, suppose that $A_{k(C)}$ does not extend to define an Azumaya algebra over all $\wt{C}$. We can find an open affine subset $U$ of smooth points of $C\subset \wt{C}$ over which $A_{k(C)}$ extends to an Azumaya algebra $A_U$ having Hilbert symbol as in Equation \eqref{eq:nicely} at every point. In view of Lemma \ref{taut_irred}, we can furthermore require that the points of $U$ correspond to absolutely irreducible representations of $\Gam$ and that $U$ avoids any prescribed finite set of closed points.

For each place $v$ of $k$, let $s_v:U(k_v) \to \Br(k_v)$ be the map defined by specializing $A_U$ at a point of $U(k_v)$. By a theorem of Harari \cite[Thm.\ 2.1.1]{Harari}, there are infinitely many places $v$ of $k$ such that $s(x_v)$ is non-trivial for some $x_v \in U(k_v)$. By continuity of Hilbert symbols, we can then find a $v$-adic disk $B_v$ of positive radius around $x_v$ in $U(k_v)$ such that $s_v$ is non-trivial at all elements of $B_v$.

It is a theorem of Rumely \cite[Thm.\ 0.3]{Rumely2} that there are infinitely many algebraic points $w$ in $U(\overline{k})$ that have all of their conjugates over $k$ contained in $B_v$. The specialization of $A_U$ at such a $w$ is thus a quaternion algebra over $k(w)$ whose local invariants at places of $k(w)$ over $v$ are non-trivial. This proves Theorem \ref{thm:FirstMain}(3).

We now prove Theorem \ref{thm:FirstMain}(4). For this, suppose that $A_{k(C)}$ extends to an Azumaya algebra $A_{\wt{C}}$ over all of $\wt{C}$. Suppose $z \in C^\sharp(\overline{k})$ and that $w = w(z) \in C(\overline{k})$ corresponds to the character of an absolutely irreducible representation $\rho = \rho_w$ of $\Gamma$. By the `r\'esultat classique' mentioned in the second remark after the statement of \cite[Thm.\ 2.1.1]{Harari}, it will suffice to show that the specialization $A_{\wt{C}} \otimes k(z)$ of $A_{\wt{C}}$ at $z$ has the same class in the Brauer group $\Br(k(z))$ as $A_\rho \otimes_{k_\rho} k(z)$.

Recall that $w \in C(\overline{k})$ implies that all of the character functions defining the embedding of $C$ into $X(\Gam)_k$ are regular at $w$. By Lemma \ref{taut_irred}, the tautological representation $P_C:\Gamma \to \mathrm{SL}_2(k(D))$ has image contained in the $k(D)$-span of four elements $\{P_C(g_j)\}_{j = 1}^4$ that are linearly independent over $k(D)$. For an arbitrary $\gamma \in \Gamma$, we can determine the coefficients $a_j \in k(D)$ such that $P_C(\gamma) = \sum_j a_j P_C(g_j)$ from
the equations
\[
\mathrm{Tr}(P_C(g_i \gamma)) = \sum_{j=1}^4 a_j \mathrm{Tr}(P_C(g_i g_j)),
\]
since the trace gives a non-degenerate pairing from $\M_2(k(D))$ to $k(D)$.

Now, using the fact that $\mathrm{Tr}(\tau)$ lies in the local ring $O_{C,w}$ for all $\tau \in \Gamma$, we see that the $O_{C,w}$-subalgebra $A_w$ of $\M_ 2(k(D))$ generated by $P_C(\Gamma)$ is finitely generated over $O_{C,w}$. Furthemore, $A_w \otimes_{O_{C,w}} k(C) = A_{k(C)}$ and $A_w \otimes_{O_{C,w}} k(w) = A_\rho$. Since $A_\rho$ is a quaternion algebra, this means by definition that $A_w$ is a quaternion Azumaya algebra over $O_{C,w}$. We conclude that $A_z = A_w \otimes_{O_{C,w}} O_{C^\sharp,z}$ is a quaternion Azumaya algebra over $O_{C^\sharp,z}$ with generic fiber $A_{k(C^\sharp)} \cong A_{k(C)}$. The localization $A_{\wt{C},z}$ of $A_{\wt{C}}$ at $z$ is also such an Azumaya algebra, so $A_z$ and $A_{\wt{C},z}$ have the same
class in the Brauer group $\Br(O_{C^\sharp,z})$ since this Brauer group injects into $\Br(k(C))$ by Theorem \ref{thm:encyclo}(5). Therefore $A_z \otimes k(z) = A_{\rho} \otimes_{k_\rho} k(z)$ and $A_{\wt{C}} \otimes k(z)$ have the same class in $\Br(k(z))$, as required.

Part (5) follows from parts (1), (2), and (5) of Theorem \ref{thm:encyclo}. \qed

\section{Azumaya algebras and canonical components}\label{sec:Proofs}

We now specialize the above to the case of most interest to us. Let $M={\bf H}^3/\Gamma$ be a 1-cusped finite volume hyperbolic 3-manifold and $\mathfrak{C} \subset X(\Gam)$ be a canonical component over the complex numbers. Define $k$ to be the the field of constants of $\mathfrak{C} \subset X(\Gamma)$, and let $C \subset X(\Gam)_k$ be the the canonical component over $k$, and note that $k$ is a number field. We will be particularly interested in the case when $M=S^3\ssm K$ for $K$ a hyperbolic knot.

\subsection{}

When $C$ is the canonical component, that $A_{k(C)}$ is a quaternion algebra follows from Lemma \ref{taut_faithful}. Indeed, Lemma \ref{taut_faithful} implies that we can apply Corollary \ref{hilberttaut} to $C$ to describe a Hilbert symbol for $A_{k(C)}$. The challenge is to now determine when $A_{k(C)}$ can be extended globally to define a quaternion Azumaya algebra $A_{\wt{C}}$ on the smooth projective model $\wt{C}$ of $C$. To that end, the remainder of this section aims at understanding when this happens, in particular proving Theorem \ref{thm:MainKnot}.

\medskip

For emphasis, in the remainder of this section we have:

\medskip

\noindent
\textbf{Assumptions:}~We fix $M=S^3\ssm K$, where $K$ is a hyperbolic knot, $\Gam=\pi_1(S^3\ssm K)$ and $C\subset X(K)_k$ the canonical component over the field of constants $k$.

\medskip

\noindent
In the next two subsections we prove the following results. Taken together, these will complete the proof of Theorem \ref{thm:MainKnot}. In \S\ref{sec:HeusenerPorti}, we consider the converse.

\begin{prop}\label{prop:IdealExtension}
The quaternion algebra $A_{k(C)}$ extends to an Azumaya algebra over the (Zariski open) set of points $\chi \in \wt{C}$ where:
\begin{enumerate}

\item $\chi=\chi_\rho$ is the character of an irreducible representation of $\Gamma$;

\item $\chi\in \mathcal{I}(\wt{C})$ is an ideal point.

\end{enumerate}
\end{prop}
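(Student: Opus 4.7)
The plan is to apply the tame symbol criterion of Theorem \ref{thm:encyclo}(8) to the Hilbert symbol description of $A_{k(C)}$ from Corollary \ref{hilberttaut}, checked locally at each point of the open set $U \subseteq \wt C$ formed by the characters of irreducible representations of $\Gamma$ together with the ideal points $\mathcal{I}(\wt C)$. By Theorem \ref{thm:encyclo}(6), producing a local extension at every $\chi \in U$ is equivalent to producing the global Azumaya extension on $U$. For any pair $g, h \in \Gamma$ eligible for Corollary \ref{hilberttaut}, one has
\[
A_{k(C)} \;\cong\; \left(\frac{I_g^2 - 4,\ I_{[g,h]} - 2}{k(C)}\right),
\]
and by Theorem \ref{thm:encyclo}(8) local extension at $\chi$ is equivalent to triviality of the tame symbol $\{I_g^2 - 4,\, I_{[g,h]} - 2\}_\chi \in k(\chi)^*/(k(\chi)^*)^2$. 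This tame symbol depends only on the Brauer class of $A_{k(C)}$, so I may use different $g, h$ at different $\chi$. Writing $m = \ord_\chi(I_g^2 - 4)$ and $n = \ord_\chi(I_{[g,h]} - 2)$, the explicit formula in Theorem \ref{thm:encyclo}(8) reduces to a manifest square whenever both $m$ and $n$ are even, and the core of the argument is to realize this even-parity criterion in each of the two cases.

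For case (1), $\chi = \chi_\rho$ is the character of an irreducible representation, and Corollary \ref{nounipotent} forbids $\rho$ from being parabolic, so some $g \in \Gamma$ has $\mathrm{tr}(\rho(g)) \neq \pm 2$. Since $\rho$ itself is irreducible, Remark \ref{rem:reducible} together with the second half of Lemma \ref{non-commuting} supplies $h \in \Gamma$ for which the restriction of $\rho$ to $\langle g, h \rangle$ is irreducible and $\mathrm{tr}(\rho([g,h])) \neq 2$. Both $I_g^2 - 4$ and $I_{[g,h]} - 2$ are then nonzero at $\chi$, hence units in $\mathcal{O}_\chi$, giving $m = n = 0$ and a trivial tame symbol.

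For case (2), $\chi \in \mathcal{I}(\wt C)$ is an ideal point, and by \S \ref{ssec:TreeAction} there exists $\gamma_0 \in \Gamma$ with $I_{\gamma_0}$ having a pole at $\chi$ of some order $m_0 > 0$. Taking $g = \gamma_0$ yields $\ord_\chi(I_g^2 - 4) = -2 m_0$, which is even. The main obstacle in the proof is then to produce $h \in \Gamma$ so that simultaneously (i) the restriction of $\rho'$ to $\langle g, h \rangle$ is irreducible for some $\chi_{\rho'} \in C$ (validating the Hilbert symbol), and (ii) $\ord_\chi(I_{[g,h]} - 2)$ is even. My approach is to appeal to the Culler--Shalen correspondence between ideal points of $\wt C$ and non-trivial simplicial $\Gamma$-actions on a tree (Theorem 2.2.1 of \cite{CullerShalen} and our \S \ref{ssec:TreeAction}): with respect to this tree action $g = \gamma_0$ is hyperbolic, and the faithfulness of $P_C$ from Lemma \ref{taut_faithful} provides enough freedom to choose $h$ so that the translation length of the commutator $[g, h]$ in the tree has controlled parity, which in turn controls the parity of $\ord_\chi(I_{[g,h]} - 2)$. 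When needed, replacing $h$ by a power $h^k$ and using the identity $I_{\gamma^2} - 2 = I_\gamma^2 - 4$ provides extra flexibility to adjust parity while preserving irreducibility of the restriction. Verifying that such an $h$ can always be produced is the delicate part; once accomplished, both orders are even, the tame symbol vanishes, and together with case (1) this completes the proof.
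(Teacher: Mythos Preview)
Your treatment of case (1) is fine and essentially matches the paper: at an irreducible character you may choose $g,h$ so that both $I_g^2-4$ and $I_{[g,h]}-2$ are units at $\chi$, whence the tame symbol is trivially $1$.

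The gap is in case (2). You correctly observe that $\ord_\chi(I_g^2-4)=-2m_0$ is even, and you are right that if you could also force $\ord_\chi(I_{[g,h]}-2)$ to be even then the tame symbol would be a square. But the mechanism you propose for achieving this does not work. The Culler--Shalen tree action tells you that $\ord_\chi(I_\gamma)=-\ell(\gamma)$ when $\gamma$ is hyperbolic (with translation length $\ell(\gamma)$), and only that $I_\gamma\in\mathcal{O}_\chi$ when $\gamma$ is elliptic; it gives no control over the \emph{positive} order of vanishing of $I_{[g,h]}-2$ at $\chi$ when the commutator is elliptic, which is the generic situation. Your fallback of replacing $h$ by $h^k$ does not help either: $[g,h^k]$ is not a power of $[g,h]$, so the identity $I_{\gamma^2}-2=I_\gamma^2-4$ does not apply to the second slot of the Hilbert symbol, and there is no clean formula relating $I_{[g,h^k]}$ to $I_{[g,h]}$ that would let you adjust parity. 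As written, the argument for ideal points is incomplete.

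The paper avoids this parity problem entirely with a one-line trick that you should internalize: since Hilbert symbols are unchanged by multiplying an entry by a square, replace $I_g^2-4$ by
\[
\alpha_g \;=\; \frac{I_g^2-4}{I_g^2} \;=\; 1-\frac{4}{I_g^2}.
\]
Because $I_g$ has a pole at $\chi$, this $\alpha_g$ is a unit in $\mathcal{O}_\chi$ whose image in $k(\chi)$ is $1$. Now $\ord_\chi(\alpha_g)=0$, so for \emph{any} $\beta$ the tame symbol $\{\alpha_g,\beta\}_\chi$ reduces to the class of $\alpha_g^{\ord_\chi(\beta)}$ in $k(\chi)^*/(k(\chi)^*)^2$, which is $1$ regardless of the parity of $\ord_\chi(\beta)$. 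No control over the second entry is needed at all.
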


To be precise, we say that $\chi \in \wt{C}$ is irreducible (resp.\ reducible) if the image of $\chi$ on $C$ under the rational map $\wt{C} \to C$ described in \S \ref{ssec:TreeAction} has image the character of an irreducible (resp.\ reducible) representation. Recall that the ideal points $\mathcal{I}(\wt{C})$ are the set of points on $\wt{C}$ where this rational map is not well-defined.

\begin{lem}\label{lem:reducible}
Suppose that the Alexander polynomial $\Del_K(t)$ satisfies property $(\star)$ of Theorem \ref{thm:MainKnot}. Then at any point $\chi_\rho\in \wt{C}$ that is the character of a reducible representation $\rho$ we have that $A_{k(C)}$ extends over $\chi_\rho$.
\end{lem}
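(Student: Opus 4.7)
The plan is to verify via Theorem \ref{thm:encyclo}(8) that the tame symbol of $A_{k(C)}$ at $\chi_\rho$ is trivial in $k(\chi_\rho)^*/(k(\chi_\rho)^*)^2$, so that the algebra extends across this codimension one point of $\wt{C}$. To set up a convenient Hilbert symbol, I would take $g = \mu$ to be a meridian; by \cite[Prop.\ 1.1.1]{CGLS} the function $I_\mu$ is non-constant (in particular not identically $\pm 2$) on $C$, so Lemma \ref{non-commuting} produces an $h \in \Gam$ such that Corollary \ref{hilberttaut} presents $A_{k(C)}$ via the Hilbert symbol $(\alpha,\beta) = (I_\mu^2 - 4,\ I_{[\mu,h]} - 2)$.

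Next, compute the orders of $\alpha$ and $\beta$ at $\chi_\rho$. Because $\rho$ is reducible and $[\mu,h]$ lies in the commutator subgroup, Lemma \ref{trace2} gives $I_{[\mu,h]}(\chi_\rho) = 2$, so $n := \ord_{\chi_\rho}(\beta) > 0$. On the other hand, Proposition \ref{CCGLS} supplies a non-abelian reducible representation $\rho'$ with $\chi_{\rho'} = \chi_\rho$ such that $\rho'(\mu)$ has an eigenvalue $w$ for which $w^2$ is a root of $\Del_K(t)$. Since $\Del_K(1) = \pm 1 \neq 0$, we have $w \neq \pm 1$, so
\[
\alpha(\chi_\rho) = (w+w^{-1})^2 - 4 = (w - w^{-1})^2 \neq 0,
\]
and $m := \ord_{\chi_\rho}(\alpha) = 0$. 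According to Theorem \ref{thm:encyclo}(8), the tame symbol of $A_{k(C)}$ at $\chi_\rho$ is then the class of $(-1)^{mn}\beta^{m}/\alpha^{n} = \alpha^{-n}$ evaluated at $\chi_\rho$, i.e.\ the class of $\alpha(\chi_\rho)^{-n}$ in $k(\chi_\rho)^*/(k(\chi_\rho)^*)^2$.

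The remaining step — and the place where hypothesis $(\star)$ is essential — is to show that $\alpha(\chi_\rho)$ is a square in the residue field $k(\chi_\rho)$, so that its class modulo squares is trivial. By Lemma \ref{lem:ResidueFieldTraceField}, $k(\chi_\rho)$ coincides with the trace field $k_\rho$, which visibly contains $I_\mu(\chi_\rho) = w + w^{-1}$. Applying $(\star)$ to the root $z = w^2$ of $\Del_K(t)$ (so that $w$ is a square root of $z$) yields $\Q(w) = \Q(w + w^{-1}) \subseteq k_\rho$, and hence $w \in k_\rho$ and $w - w^{-1} \in k_\rho$. Consequently $\alpha(\chi_\rho) = (w - w^{-1})^2$ is a square in $k_\rho$, the tame symbol vanishes, and Theorem \ref{thm:encyclo}(8) concludes that $A_{k(C)}$ extends over $\chi_\rho$. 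The main obstacle is precisely the descent of the square root $w - w^{-1}$ from $\overline{\Q}$ into the trace field, which is exactly the content of $(\star)$; the rest of the argument is a direct tame-symbol computation using the meridional Hilbert symbol.
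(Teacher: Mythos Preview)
Your proof is correct and follows essentially the same approach as the paper's: compute the tame symbol at $\chi_\rho$ for the meridional Hilbert symbol $(I_\mu^2-4,\,I_{[\mu,h]}-2)$, observe that the second entry vanishes while the first does not, and use $(\star)$ to recognise $(w+w^{-1})^2-4=(w-w^{-1})^2$ as a square in the residue field. The only cosmetic differences are that the paper takes $h$ to be a second meridian (via Lemma \ref{meridian_non-commuting}) and invokes Corollary \ref{nounipotent} to rule out $I_\mu(\chi_\rho)=\pm 2$, whereas you allow an arbitrary $h$ from Lemma \ref{non-commuting} and instead use $\Delta_K(1)=\pm 1$ to exclude $w=\pm 1$; both routes are fine.
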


\subsection{Proof of Proposition \ref{prop:IdealExtension}}

Lemmas \ref{non-commuting} and \ref{meridian_non-commuting} along with Corollary \ref{hilberttaut} imply that we can choose a pair of non-commuting meridians $g,h$ in $\Gam$, so that the functions $f_g = I_g^2 - 4$ and $I_{[g,h]}-2$ can be used to describe a Hilbert symbol for $A_{k(C)}$. Note that $I_{[g,h]}-2$ cannot be identically $0$ on $C$ since it is non-zero at the character of the discrete and faithful representation.

\medskip

\noindent
\textbf{Notation:}~{\em For $f\in k(C)$, let $Z(f)$ denote the set of zeroes of $f$ in $\wt{C}$.}

\medskip

Let $W \subseteq \wt{C}$ be $\mathcal{I}(\wt{C})$ together with $Z = Z(f_g)\cup Z(I_{[g,h]}-2)$ on $\wt{C}$. By Lemma \ref{finitelymanyreducible}, $W$ is a finite collection of points that includes the set $C_R$ of reducible characters on $\wt{C}$. Note also that any poles of $f_g$ and $I_{[g,h]}-2$ occur at points in $\mathcal{I}(\wt{C}) \subseteq W$.

Given this, for any point in the Zariski open set $U=\wt{C}\smallsetminus W$ we have that $\ord_P(f_g)=\ord_P(I_{[g,h]}-2)=0$. It follows that the tame symbol $\{f_g,I_{[g,h]}-2\}_P$ is trivial, and therefore $A_{k(C)}$ can be extended over $U$.

To see this directly from the definition, let $V \subset \wt{C} \ssm \mathcal{I}(\wt{C})$ be the points associated with characters of irreducible representations. For any $P \in V \ssm C_R$ with image on $C$ the character $\chi_\rho$ of an irreducible representation $\rho$, we can choose $g'$ and $h'$ in $\Gam$ such that the elements $\{1, \rho(g'), \rho(h'), \rho(g'h')\}$ form a $k_\rho$-basis for $A_\rho$ for some (hence any) representation with character $\chi_\rho$. Then $\{1, P_C(g'), P_C(h'), P_C(g'h')\}$ is a basis for $A_{k(C)}$. Moreover, if $\mathcal{O}_P$ is the local ring of $P$, the $\mathcal{O}_P$-span of this basis defines an Azumaya algebra $A_P$ over $\mathcal{O}_P$. Indeed, the reduction of $A_P$ modulo the maximal ideal of $\mathcal{O}_P$ is the given basis for the quaternion algebra $A_\rho$ (note that the residue field of the point $P$ is generated by $k_\rho$ and the field of constants $k$ by Lemma \ref{lem:ResidueFieldTraceField}), so $A_P \otimes k(P)$ is a central simple algebra. Thus $A_{k(C)}$ extends over $P$ by Theorem \ref{thm:encyclo}(6).

\medskip

We now show how one extends the Azumaya algebra $A_{k(C)}$ over points in $\mathcal{I}(\wt{C})$. Fix $P\in \mathcal{I}(\wt{C})$ and denote the local ring at $P$ by $\mathcal{O}_P$ and its maximal ideal by $\mathfrak{m}_P$. Note that, from the discussion at the end of \S \ref{ssec:TreeAction}, if $\wt{I}_g$ is in $\mathcal{O}_P$ for all $g$ in $\Gamma$, then $P \in C^\#$. Since $P$ is an ideal point, we can find $g\in \Gamma$ such that $\wt{I}_g \notin \mathcal{O}_P$. Note that the element $g$ need not be a meridian in this case, nor even a peripheral element; by \cite{CullerShalen} we can take $g$ to be peripheral when the incompressible surface detected by the ideal point is not closed.

Regardless, Lemma \ref{non-commuting} allows us to use $g$ as part of a basis for $A_{k(C)}$. As before, since $C$ is the canonical component, Lemma \ref{finitelymanyreducible} shows that there are only finitely many characters of reducible representations on $C$, and only finitely many places where $\wt{f}_g$ takes on the value $0$, since $\wt{I}_g$ is non-constant by assumption. Thus we can take $\wt{f}_g$ as one term in a Hilbert symbol
\[
\left(\frac{\wt{f}_g,\beta}{k(C)}\right)
\]
for the canonical quaternion algebra $A_{k(C)}$, where $\beta\in k(C)$ is constructed using Lemma \ref{non-commuting}.

We assumed that the order $\ord_P(\wt{f}_g)$ is negative. Since changing a term in a Hilbert symbol by the square of an element of $k(C)$ does not change the resulting quaternion algebra, the element
\[
\alpha_g = \frac{\wt{f}_g}{I_g^2}
\]
defines another Hilbert symbol
\[
\left(\frac{\alpha_g,\beta}{k(C)}\right)
\] 
for $A_{k(C)}$ as a quaternion algebra over $k(C)$. Note that $\alpha_g$ is a unit in $\mathcal{O}_P$, and the image of $\alpha_g$ in the residue field $k(P) = \mathcal{O}_P/\mathfrak{m}_P$ of $P$ is $1$. To see this observe that
\[
\alpha_g = \frac{I_g^2 - 4}{I_g^2} = 1 - \frac{4}{I_g^2},
\]
and then the rest is clear from $\ord_P(4/I_g^2) = -2\, \ord_P(I_g) > 0$.

To determine whether $A_{k(C)}$ extends to an Azumaya algebra at $P$, we need to show that the tame symbol $\{\alpha_g, \beta\}_P$ is trivial. To prove this, let
\begin{align*}
s &= \ord_P(\alpha_g) \\
r &= \ord_P(\beta).
\end{align*}
Then the tame symbol $\{\alpha_g, \beta\}_P$ is the image in $k(P)^* / (k(P)^*)^2$ of
\[
(-1)^{r s} \frac{\alpha_g^r}{\beta^s}.
\]
However, we saw that $s = 0$, so this simplifies to just the image in $k(P)^*$ of $\alpha_g^r$. Since $\alpha_g$ has image $1$ in $k(P)^*$, the tame symbol is therefore trivial and we can extend $A_{k(C)}$ over the ideal point $P$. This proves that $\mathcal{A}_{\wt{C}}$ is also defined at points of $\mathcal{I}(\wt{C})$ as required, and hence completes the proof of Proposition \ref{prop:IdealExtension}.\qed

\begin{rem}
The careful reader will notice that, when $P \in \wt{C}$ lies over a singular point on the affine curve $C$, we must calculate in the discrete valuation ring $\mathcal{O}_P$ for `$\ord_P$' to even make sense. When $x$ is a smooth point of $C$, one has that $\mathcal{O}_x \cong \mathcal{O}_P$ and there is no difference. When $x$ is a singular point, $\mathcal{O}_P$ might be a bigger ring, but if we can extend our Azumaya algebra on $C$ over $x$, then we can certainly extend it over $P$. However, the converse is not necessarily true: it is possible that the Azumaya algebra does not extend over a singular point on the affine curve, but does extend over any point above it on the smooth projective model. This subtlety will arise in our converse to Theorem \ref{thm:MainKnot} in \S \ref{sec:HeusenerPorti}.
\end{rem}

\subsection{}

We now give the proof of Lemma \ref{lem:reducible}. This, with Proposition \ref{prop:IdealExtension}, completes the proof of Theorem \ref{thm:MainKnot}.

We begin with some preliminary comments. Let $P \in \wt{C}$ lie above $\chi_\rho \in C$ for $\rho$ a reducible representation. We can assume that $\rho$ is non-abelian by Proposition \ref{CCGLS}, and hence it is conjugate into the group of upper-triangular matrices but is not parabolic. Indeed, recall from the discussion after Proposition \ref{CCGLS} that if $\mu_1,\ldots ,\mu_n$ is a collection of meridional generators for $\Gamma$ then
\[
\rho(\mu_i) = \begin{pmatrix} w & t_i\\ & \\ 0 & w^{-1} \end{pmatrix},
\]
where $w^2=z$ is a root of $\Delta_K(t)$, $t_i\in \C$ for $i=1,\ldots ,n$. From condition $(\star)$, we have $\Q(w)=\Q(w+w^{-1})$. If $k$ is the field of constants of $C$, then we also have that $k(w)=k(w+w^{-1})$. In particular, the residue field $k(P)$ of $P$ satisfies $k(P)=k(w+w^{-1})=k(w)$.

Assume by way of contradiction that $A_{k(C)}$ does not extend to an Azumaya algebra $A_{\wt{C}}$ at $P$. Using Lemma \ref{meridian_non-commuting} we can choose meridians $\mu$ and $\nu$ so that $A_{k(C)}$ is defined by the Hilbert symbol
\[
\left( \frac{a,b}{k(C)} \right),
\] 
where $a=f_\mu$, $b=I_{[\mu,\nu]}-2$. Since we assumed that $A_{k(C)}$ does not extend, the tame symbol $\{a,b\}_P$ must be non-trivial.

Corollary \ref{nounipotent} implies that $\ord_P(a)=0$, and $\ord_P(b)>0$ by Lemma \ref{trace2}. If $\ord_P(b)$ is even, then the tame symbol is trivial, and since we are assuming this is not the case we have that $\ord_P(b)$ is odd. Furthermore, after dividing $b$ by a square in $k(C)$, we can assume that $\ord_P(b) = 1$. Therefore the tame symbol is just the class $a'$ of $a$ in $k(P)^*/(k(P)^*)^2$, and in particular $a'$ cannot be a square in $k(P)$.

On the other hand, evaluating at $P$, $a'$ is the class of
\[
(w+w^{-1})^2-4 = (w-w^{-1})^2,
\]
since $w\in k(w+w^{-1})$ by assumption. In particular, $a'$ is a square in the residue class field $k(P)$, which implies that the tame symbol is trivial. This contradiction proves Lemma \ref{lem:reducible}.\qed

\begin{rem}\label{notS3} 
Note that the arguments of \S 4.2 apply more generally. In particular if $Y$ is a closed orientable 3-manifold and $K\subset Y$ is a knot with hyperbolic complement, trivial Alexander polynomial, and $H_1(Y\ssm K, \Z)\cong \Z$, then the canonical quaternion algebra can be used to define an Azumaya algebra over all points of the smooth model of the canonical component. Note that \cite[\S4]{HP} describes a generalization of De Rham's result on characters of reducible representations, and in particular, triviality of the Alexander polynomial excludes there being non-abelian reducible representations to consider.
\end{rem}

We now discuss the extent to which condition $(\star)$ is almost an if and only if statement.

\subsection{The converse to Theorem \ref{thm:MainKnot}}\label{sec:HeusenerPorti}

The primary goal of this section is to prove the following proposition.

\begin{prop}\label{prop:StarConverse}
Let $K$ be a hyperbolic knot in $S^3$. Let $C \subset X(K)_k$ be a canonical component over the number field $k$. Suppose that $\chi_\rho$ is a smooth point on $C$ for each character $\chi_\rho \in C$ of a non-abelian reducible representation. Then $A_{k(C)}$ extends to define an Azumaya algebra on $\wt{C}$ if and only if $k(w)$ equals $k(w + w^{-1})$ for every square root $w$ of a root of $\Del_K(t)$ associated with a non-abelian character on $\wt{C}$.
\end{prop}

When $k = \Q$, note that the condition $k(w) = k(w + w^{-1})$ just becomes condition $(\star)$ from Theorem \ref{thm:MainKnot}. According to work of Heusener--Porti--Su\'arez Peir\'o \cite[Thm.\ 1.1]{HP}, the point $\chi_\rho$ is always smooth when the associated root $z$ of the Alexander polynomial is simple, i.e., has multiplicity one. In particular, we see that the full converse to Theorem \ref{thm:MainKnot} is quite often also true. We will prove the following after we prove Proposition \ref{prop:StarConverse}.

\begin{thm}\label{thm:BigConverse}
Let $K$ be a hyperbolic knot and let $C\subset X(K)_k$ be a canonical component of its $\SL_2$ character variety. Assume that the inverse image $C^\prime$ of $C$ in $X(K)_\C$ is the unique component of $X(K)_\C$ containing the character of an irreducible representation. If all roots of the Alexander polynomial $\Del_K(t)$ are simple, then $A_{k(C)}$ extends to define an Azumaya algebra over the entire smooth projective model of $C$ if and only if condition $(\star)$ of Theorem \ref{thm:MainKnot} holds for every root of $\Del_K(t)$.
\end{thm}

We also briefly note that large families of knots, like all twist knots, have Alexander polynomial with only simple roots (see \S 6).

\begin{proof}[Proof of Proposition \ref{prop:StarConverse}]
Let $P = \chi_\rho \in C$ be the character of a non-abelian reducible representation $\rho$, $z$ be the associated root of the Alexander polynomial, and $w$ be a square root of $z$. Recall that we have non-commuting elements $g, h \in \Gam$ such that our Azumaya algebra $A_{k(C)}$ over the function field $k(C)$ of $C$ has Hilbert symbol
\[
A_{k(C)} = \left(\frac{I_g^2 - 4\, ,\, I_{[g, h]} - 2}{k(C)}\right).
\]
Assuming that $k(w)$ is not equal to $k(w + w^{-1})$ for this root of the Alexander polynomial we will prove that that $A_{k(C)}$ does not extend over $P$. The converse was already proved in Theorem \ref{thm:MainKnot}, so this suffices to prove the proposition.

Define $\alpha = I_g^2 - 4$. Taking $g$ to be a meridian of our knot, we claim that $\alpha(P) \neq 0$. Indeed, since $\rho$ is reducible it is conjugate into upper-triangular matrices and since $\Gam$ is generated by meridians, if $\alpha(P) = 0$ one sees that $\rho$ is a parabolic representation where the image of each meridian has trace $2$. Corollary \ref{nounipotent} implies that $\rho$ is not parabolic, hence $I_g$ cannot take the value $\pm 2$ at $P$, and so $\alpha$ cannot be zero at $P$. This proves the claim.

In other words, $\ord_P(\alpha) = 0$. Since $\rho$ is reducible, $(I_{[g, h]}-2)(P) = 0$, i.e., $\ord_P(I_{[g, h]} - 2) > 0$. Scaling by squares in $k(C)$, we can replace $I_{[g,h]}-2$ with a function $\beta \in k(C)$ such that $\ord_P(\beta) \in \{0, 1\}$. This means that the tame symbol for $A_{k(C)}$ at $P$ becomes
\[
\{\alpha, \beta\}_P = \alpha^{\ord_P(\beta)}.
\]
We must prove that the tame symbol is non-trivial. In other words, we must show that $\alpha$ is not a square in $k(P)$ and $\ord_P(\beta) = 1$.

Since $k(w)$ clearly has degree either one or two over $k(w + w^{-1})$, we conclude that $k(w) / k(w + w^{-1})$ is degree exactly two. Note that $k(P)$ is a number field. In fact, Lemma \ref{lem:ResidueFieldTraceField} implies that $k(P)$ is the subfield of $\C$ generated over $k$ by the values of the character $\chi_\rho$. From the discussion following Proposition \ref{CCGLS}, we see that $k(P) = k(w + w^{-1})$.

Let $I_P$ be a square root of $\alpha(P) \in k(P)^*$. We claim that $k(P)(I_P)$ equals $k(w)$. Indeed, $I_P$ is a square root of
\[
(w+w^{-1})^2 - 4 = w^2 + w^{-2} - 2 = (w - w^{-1})^2,
\]
which does not lie in $k(w + w^{-1})$ by our assumption that $k(w)$ is degree two over $k(w + w^{-1})$. Therefore, $k(P)(I_P)$ is a quadratic extension of $k(P)$ and so $\alpha$ is not a square in $k(P)^*$ as claimed.

We now must show that $\ord_P(\beta) = 1$. Assuming that the root of the Alexander polynomial associated with $P$ is a simple root, the proof of \cite[Lem.\ 5.7]{HP} proves that $-\beta / \alpha$ is a local coordinate in a sufficiently small analytic neighborhood of the smooth point $P$. Since $\ord_P(\alpha) = 0$, this implies that $\ord_P(\beta) = 1$, as desired. This completes the proof.
\end{proof}

We will show that Theorem \ref{thm:BigConverse} is a consequence of Proposition \ref{prop:StarConverse} and the following lemma.

\begin{lem}\label{lem:UniqueToQ}
Let $K$ be a hyperbolic knot and let $C\subset X(K)_k$ be a canonical component of its $\SL_2$ character variety. Assume that the inverse image $C^\prime$ of $C$ in $X(K)_\C$ is the unique component of $X(K)_\C$ containing the character of an irreducible representation. Then $C$ has field of constants $\Q$.
\end{lem}

\begin{proof}
Let $\pi : X(K)_k \to X(K)_\Q$ be the projection. Then $\pi(C)$ is an irreducible subscheme of $X(K)_\Q$ with some field of constants $k_0$. The inverse image of $\pi(C)$ under the natural projection $X(K)_\C \to X(K)_\Q$ contains $[k_0 : \Q]$ conjugates of $C^\prime$ under the action of $\Gal(\C/\Q)$ on the second factor of $X(K)_\C = X(K)_\Q \otimes_\Q \C$. Indeed, if $B$ is a $k_0$-algebra, then $B \otimes_\Q \C$ is a direct sum of $B \otimes_{k_0, \sigma} \C$ over all embeddings $\sigma$ of $k_0$ into $\C$.

The representations associated with points on these conjugates of $C^\prime$ are $\Gal(\C / \Q)$-conjugates of representations associated with points on $C^\prime$. In particular, all conjugates of $C^\prime$ contain the character of an irreducible representation. Since $C^\prime$ is the only component of $X(K)_\C$ containing the character of an irreducible representation, we must have $[k_0 : \Q] = 1$, i.e., $k_0 = \Q$, as claimed.
\end{proof}

\begin{proof}[Proof of Theorem \ref{thm:BigConverse}]
As discussed above, it suffices by Proposition \ref{prop:StarConverse} to show that the canonical component has field of constants $\Q$. By hypothesis, this follows from Lemma \ref{lem:UniqueToQ}.
\end{proof}

\section{Integral models and Theorems \ref{thm:MainKnotIntegral} and \ref{non-trivial}}\label{sec:IntegralModel}

The purpose of this section is to prove Theorems \ref{thm:MainKnotIntegral} and \ref{non-trivial}. The proof of Theorem \ref{thm:MainKnotIntegral} requires revisiting some of our previous calculations in the case where the residue class fields are finite, which introduces particular difficulties in characteristic $2$. The framework that allows us to do this is that of integral models and we refer the reader to \cite{ChinburgMinimal}, \cite{Lichtenbaum}, and \cite{Liu} for further details. The proof of Theorem \ref{non-trivial} uses Theorem \ref{thm:MainKnotIntegral} as well as the Tate--Shafarevich group and a relative version of it due to Stuhler (see \cite{Stuhler} for further details).

\subsection{Ramification at codimension one points.}\label{s:ramification}

Suppose that $O$ is a Dedekind ring of characteristic $0$ with fraction field ${k}$ and that $C$ is a regular projective curve over ${k}$. By enlarging ${k}$ if necessary we can assume that $C$ is geometrically irreducible over ${k}$. In other words, by passing to a certain extension and taking an irreducible component, we can assume that $C$ remains irreducible in an algebraic closure of ${k}$. The theory of integral models implies that there is a regular projective curve $\mathcal{C}$ over $O$ such that $C$ is isomorphic to $\mathcal{C} \otimes_O {k}$.

Such $\mathcal{C}$ are not unique. However, there are always $\mathcal{C}$ that are relatively minimal in the sense that any proper morphism $\mathcal{C} \to \mathcal{C}'$ to another regular projective model $\mathcal{C}'$ of $C$ must be an isomorphism. If $C$ has positive genus, then all relatively minimal models are isomorphic. Finally, suppose $\mathcal{D}$ is any regular projective scheme over $O$ whose function field $k(\mathcal{D})$ is isomorphic to $k(C)$. Then the general fiber $\mathcal{D} \otimes_O {k}$ is a regular projective curve over ${k}$ with function field $k(C)$. This forces $\mathcal{D} \otimes_O {k}$ to be isomorphic to $C$ over ${k}$.
 
Let $A_{k(C)}$ be a quaternion algebra over the function field $k(C)$ of $C$. Theorem \ref{thm:encyclo}(\ref{AZOK}) implies that $A_{k(C)}$ extends to an Azumaya algebra $A_{\mathcal{C}}$ over $\mathcal{C}$ if an only if it extends over every codimension one point $P$ of $\mathcal{C}$. The latter condition means that there is an Azumaya algebra $A_P$ over the local ring $O_P = O_{\mathcal{C},P}$ of $P$ such that $A_P \otimes_{O_P} k(C) \cong A_{k(C)}$. Here $O_P$ is a discrete valuation ring and the residue field $k(P)$ is a global field, since $\mathcal{C}$ is a scheme of dimension $2$. Let $\hat{O}_P$ be the completion of $O_P$ and $k(C)_P$ be the fraction field of $\hat{O}_P$.

The next result follows from \cite[Lem.\ 3.4]{ParimalaSuresh}. For convenience, we note that our $k(C)$, $O_P$, and $A_{k(C)}$ correspond to their $k$, $R$, and $D$ respectively.

 \begin{lem}\label{lem:parnice}
The quaternion algebra $A_{k(C)}$ extends over $P$ if and only if the quaternion algebra $A_{k(C)_P}=A_{k(C)} \otimes_{k(C)} k(C)_P$ over $k(C)_P$ extends over the maximal ideal $\hat{P}$ of $\hat{O}_P$, i.e., there is an Azumaya algebra $\mathcal{D}$ over $\hat{O}_P$ so that $\mathcal{D} \otimes_{\hat{O}_P} k(C)_P$ determines the same class as $A_{k(C)}$ in the Brauer group of $k(C)_P$.
\end{lem}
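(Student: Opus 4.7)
The plan is to prove the two implications separately. The forward direction is formal, while the reverse is a descent statement along the faithfully flat map $O_P \to \hat{O}_P$, and is essentially \cite[Lem.\ 3.4]{ParimalaSuresh}. Throughout, I would use that $O_P$ and $\hat{O}_P$ are discrete valuation rings sharing the same residue field $k(P)$ and the same (image of a) uniformizer.

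For the ``only if'' direction, suppose $A_{k(C)}$ extends to an Azumaya algebra $A_P$ over $O_P$. Then $\mathcal{D} := A_P \otimes_{O_P} \hat{O}_P$ is an Azumaya algebra over $\hat{O}_P$ whose generic fiber $\mathcal{D} \otimes_{\hat{O}_P} k(C)_P$ represents the same Brauer class as $A_{k(C)_P}$. This uses only functoriality of Azumaya algebras under base change.

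For the converse, the plan is to exploit the commutative diagram
\[
\begin{array}{ccc}
\Br(O_P) & \longrightarrow & \Br(\hat{O}_P) \\
\downarrow & & \downarrow \\
\Br(k(C)) & \longrightarrow & \Br(k(C)_P)
\end{array}
\]
in which the vertical maps are injective by Theorem \ref{thm:encyclo}(5). The cokernels of the vertical maps are controlled by the residue map at $P$ and at $\hat{P}$ respectively, and these residue maps agree because the two local rings share a uniformizer and a residue field. Consequently, the class of $A_{k(C)}$ in $\Br(k(C))$ lies in the image of $\Br(O_P)$ if and only if its image in $\Br(k(C)_P)$ lies in the image of $\Br(\hat{O}_P)$, which is precisely the desired equivalence.

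The main obstacle is that $k(P)$ may have residue characteristic equal to $2$, in which case the classical tame symbol of Theorem \ref{thm:encyclo}(8) does not detect the obstruction to extension and one must work with a Kato-type residue instead. This is exactly the difficulty addressed by \cite[Lem.\ 3.4]{ParimalaSuresh}, which is phrased directly in terms of the extension of Azumaya algebras rather than symbols and so handles wild and tame residues uniformly. The plan is therefore to invoke their lemma after matching the setup: their $k$, $R$, and $D$ correspond respectively to our $k(C)$, $O_P$, and $A_{k(C)}$, and the hypothesis that $O_P$ is the local ring at a codimension one point of the regular surface $\mathcal{C}$ guarantees that $O_P$ is a discrete valuation ring, as required.
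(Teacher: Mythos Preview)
Your proposal is correct and takes essentially the same approach as the paper: the paper's entire proof is a direct citation of \cite[Lem.\ 3.4]{ParimalaSuresh} together with the identical dictionary $k(C) \leftrightarrow k$, $O_P \leftrightarrow R$, $A_{k(C)} \leftrightarrow D$ that you give. Your added exposition (the formal forward direction, the Brauer-group square, and the remark about characteristic $2$) is accurate but goes beyond what the paper records.
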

 
We will also need the following weak sufficient criterion for $A_{k(C)}$ to extend over $P$. Note that in the present situation we allow for the possibility that $k(C)$ has characteristic zero (so $2 \neq 0$) but the residue class fields can be finite (and in particular characteristic $2$). As before,
we use $\ord_P$ to denote the order of a zero or pole.

\begin{prop}\label{prop:Parimala-Suresh}
Let $\alpha,\beta \in k(C)$ define a Hilbert symbol $\{\alpha,\beta\}$ for $A_{k(C)}$ for some $\alpha, \beta \in k(C)$. For a given $P\in C$ assume that
\[
\ord_P(1-\alpha) > 2\, \ord_P(2).
\]
Then $A_{k(C)}$ extends over $P$.
\end{prop}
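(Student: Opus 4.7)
The plan is to apply Hensel's lemma in the complete discrete valuation ring $\hat{O}_P$ to show that $\alpha$ becomes a square there, from which it follows that the quaternion algebra $A_{k(C)_P}$ splits and hence extends trivially to an Azumaya algebra over $\hat{O}_P$. Lemma \ref{lem:parnice} then implies that $A_{k(C)}$ extends over $P$.

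In more detail, first I would verify that $\alpha$ is a unit in $\hat{O}_P$. The hypothesis $\ord_P(1-\alpha) > 2\,\ord_P(2) \ge 0$ gives $\ord_P(1 - \alpha) > 0$, and by the non-Archimedean triangle inequality $\ord_P(\alpha) = \ord_P(1 - (1-\alpha)) = \min(\ord_P(1), \ord_P(1-\alpha)) = 0$. So $\alpha \in \hat{O}_P^*$. Next, apply Hensel's lemma to the polynomial $f(x) = x^2 - \alpha \in \hat{O}_P[x]$ at the approximate root $x = 1$. One has $f(1) = 1-\alpha$ and $f'(1) = 2$, and the hypothesis reads precisely $\ord_P(f(1)) > 2\,\ord_P(f'(1))$, which is the Hensel condition valid in any characteristic (in particular in residue characteristic $2$). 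Hence there exists $u \in \hat{O}_P^*$ with $u^2 = \alpha$.

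Once $\alpha$ is a square in $k(C)_P$, the Hilbert symbol $\left(\frac{\alpha,\beta}{k(C)_P}\right)$ is trivial in $\mathrm{Br}(k(C)_P)$, so $A_{k(C)} \otimes_{k(C)} k(C)_P \cong \M_2(k(C)_P)$. This algebra is the generic fiber of the Azumaya algebra $\M_2(\hat{O}_P)$ over $\hat{O}_P$, so the conclusion of Lemma \ref{lem:parnice} applies and $A_{k(C)}$ extends over $P$.

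There is essentially no obstacle beyond checking that the standard form of Hensel's lemma applies in mixed or positive residue characteristic at the prime $2$; the strict inequality $\ord_P(1-\alpha) > 2\,\ord_P(2)$ is precisely what makes Hensel's lemma work for the polynomial $x^2 - \alpha$ even when $2$ is not a unit in the residue field, which is the whole point of the stated bound. Everything else reduces to invoking Lemma \ref{lem:parnice} and the fact that a split quaternion algebra over $k(C)_P$ automatically has a matrix-algebra Azumaya model over $\hat{O}_P$.
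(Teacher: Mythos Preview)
Your proof is correct and follows essentially the same approach as the paper: use Hensel's lemma in $\hat{O}_P$ to show that $\alpha$ is a square (taking the approximate root $y=1$ and noting that the hypothesis is exactly the Hensel condition $\ord_P(f(1)) > 2\,\ord_P(f'(1))$ for $f(x)=x^2-\alpha$), conclude that $A_{k(C)_P}$ splits, and invoke Lemma~\ref{lem:parnice}. Your version is slightly more detailed in explicitly verifying that $\alpha$ is a unit, but the argument is the same.
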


\begin{proof}
By Lemma \ref{lem:parnice}, it is sufficient to show that the Hilbert symbol defined by $\{\alpha,\beta\}$ over $k(C)_P$ defines a matrix algebra. The result then follows from Hensel's lemma, since $\alpha$ is a square in $\hat{O}_P$ when
\[
\mathrm{ord}_P(y^2 - \alpha) > \mathrm{ord}_P((2y)^2) = 2\, \mathrm{ord}_P(2 y)
\]
for some $y \in k(C)_P$, and our assumption allows us to take $y = 1$.
\end{proof}

In the complete local case, we also need the following result concerning the structure of maximal orders in quaternion division algebras.

\begin{prop}\label{prop:structure}
Suppose that $O$ is a complete discrete valuation ring with fraction field $F$ of characteristic $0$. Let $A$ be a quaternion algebra over $F$ that does not extend to an Azumaya algebra over $O$, and let $n:A \to F$ be the reduced norm.
\begin{enumerate}

\item[i.] Any such $A$ is a division ring, and the set $D$ of elements $\alpha \in A$ such that $n(\alpha) \in O$ is the unique maximal $O$-order in $A$.

\item[ii.] There is an element $\lambda \in D$ such that $J = D \lambda$ is the unique maximal two sided ideal of $D$. All non-zero two-sided ideals of $D$ are powers of $J$.

\item[iii.] Let $\pi$ be a uniformizer in $O$. Then $J^2 = D \pi$ and $D/J$ is a quadratic extension field of the residue field $k = O/O\pi$ of $O$.

\item[iv.] Suppose $z \in D$ has $n(z) = 1$. Let $\tilde{z}$ be the image of $z$ in $D/J$, and suppose that $\tilde{z}$ is quadratic over $k$, so $k(\tilde{z}) = D/J$. Then $k(\tilde{z})$ is separable over $k$, $J/J^2$ is a one-dimensional $k(\tilde{z})$-vector space, and the conjugation action of $z$ on this space is given by left multiplication by $\tilde{z}^2$. The conjugation action of $z$ on $D/J$ is trivial.

\end{enumerate}
\end{prop}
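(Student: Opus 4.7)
Parts (i)--(iii) are textbook structure theory of maximal orders in quaternion division algebras over complete local fields (for instance, Reiner's \emph{Maximal Orders}, Chapters 12--14), so I would sketch them and concentrate the effort on (iv).

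For (i), since $A$ admits no Azumaya extension over $O$, it cannot be isomorphic to $\M_2(F)$ (otherwise $\M_2(O)$ would be such an extension), so $A$ is a division ring. The reduced norm then extends $v_F$ to a valuation $v(\alpha) = \tfrac{1}{2} v_F(n(\alpha))$ on $A$, taking values in $\tfrac{1}{2}\Z$; since any element of an $O$-order is integral over $O$ and hence has $n(\alpha) \in O$, the valuation ring $D = \{v \ge 0\}$ contains every $O$-order, so is the unique maximal one. For (ii)--(iii), $J = \{v > 0\}$ is the unique maximal two-sided ideal; any $\lambda \in J$ with $v(\lambda) = \tfrac{1}{2}$ generates $J = D\lambda = \lambda D$ on either side, and every non-zero two-sided ideal is a power $J^n = D\lambda^n$. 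Since $v(\pi) = 1 = 2\, v(\lambda)$ one has $J^2 = D\pi$, and the identity $[A:F] = ef$ with $e=f$ for central division algebras over local fields gives $[D/J : k] = 2$.

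For (iv), the element $z$ satisfies $X^2 - tX + 1 = 0$ with $t = \tr(z) \in O$, so $\tilde z$ satisfies $X^2 - \tilde t\, X + 1 = 0$. When $\tilde z$ is genuinely quadratic this is its minimal polynomial; separability is automatic in characteristic $\neq 2$, while in characteristic $2$ it follows because $\tilde t = 0$ would force $\tilde z^2 = 1$, hence $\tilde z = 1 \in k$, contradicting quadraticity. The non-trivial Galois element $\sigma$ of $k(\tilde z)/k$ sends $\tilde z$ to the other root, which the product-of-roots relation identifies as $\tilde z^{-1}$. The left $D/J$-module $J/J^2$ is generated by $[\lambda]$ via the isomorphism $\bar d \mapsto \overline{d\lambda}$, hence is one-dimensional over $k(\tilde z)$. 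Its right action is computed by conjugating through $\lambda$:
\[
[\lambda] \cdot \bar e \;=\; \overline{\lambda e} \;=\; \overline{\lambda e \lambda^{-1}}\,[\lambda] \;=\; \bar\sigma(\bar e)\,[\lambda],
\]
where $\bar\sigma \in \Gal(k(\tilde z)/k)$ denotes the automorphism induced by conjugation by $\lambda$ (which preserves $D$ and $J$ since it preserves $v$). To identify $\bar\sigma = \sigma$, lift $D/J$ to the unramified quadratic extension $F' \subset A$ with ring of integers $O' \subset D$; if $\bar\sigma$ were trivial then $\lambda$ would centralize $F'$, and the double centralizer theorem in the quaternion division algebra $A$ would force $\lambda \in F'$, contradicting $v(\lambda) = \tfrac{1}{2} \notin v_F(F')$ as $F'/F$ is unramified. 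Since $z^{-1} = t - z$ reduces to $\tilde t - \tilde z = \sigma(\tilde z) = \tilde z^{-1}$, one then gets
\[
z[\lambda]z^{-1} \;=\; \tilde z \cdot [\lambda] \cdot \tilde z^{-1} \;=\; \tilde z\, \sigma(\tilde z^{-1})\,[\lambda] \;=\; \tilde z \cdot \tilde z\,[\lambda] \;=\; \tilde z^{2}\,[\lambda].
\]
The induced action on $D/J$ is trivial because $D/J$ is commutative.

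\emph{Main obstacle.} The real subtlety is the identification $\bar\sigma = \sigma$, which is what converts the abstract bimodule twist into the explicit formula ``conjugation by $z$ equals multiplication by $\tilde z^2$.'' The characteristic-$2$ check for separability is small but genuinely necessary. Everything else is bookkeeping once the valuation-theoretic framework from (i)--(iii) is in place.
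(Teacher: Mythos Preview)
Your argument for (i)--(iii) and the separability portion of (iv) matches the paper's. For the conjugation computation in (iv), however, you take a genuinely different route. The paper works globally on $A$: it writes $F(z)=F(d)$ with $d^2\in F$, decomposes $A=F(z)\oplus F(z)w$ into $\pm 1$-eigenspaces for conjugation by $d$, checks $wzw^{-1}=z^{-1}$ and hence $zwz^{-1}=z^2w$, so the characteristic polynomial of conjugation by $z$ on the two-dimensional $F(z)$-space $A$ is $(t-1)(t-z^2)$. Since $D$ is a $z$-stable free rank-$2$ $O[z]$-lattice, this reduces modulo $\pi$ to $(t-1)(t-\tilde z^2)$ on $D/D\pi$, and the eigenvalue $\tilde z^2$ is forced onto the invariant line $J/J^2$. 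Your approach instead analyzes $J/J^2$ directly as a $(D/J,D/J)$-bimodule: the right action is the left action twisted by $\bar\sigma$, and the conjugation formula drops out once $\bar\sigma$ is identified with the non-trivial Galois element. Both approaches are clean; yours is more structural, the paper's more explicit.

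There is one genuine gap in your identification $\bar\sigma=\sigma$. You assert that if $\bar\sigma$ were trivial then $\lambda$ would centralize $F'$. But $\bar\sigma=\mathrm{id}$ only says $\lambda d\lambda^{-1}\equiv d\bmod J$ for all $d\in D$; it does \emph{not} imply $\lambda$ commutes with elements of $O'$, nor even that conjugation by $\lambda$ preserves $F'$ as a subset of $A$. (Concretely, in the presentation $A=F'\oplus F'\Pi$ with $\Pi x=\tau(x)\Pi$ and $\Pi^2=\pi$, the unit $u=1+\Pi$ does not normalize $F'$, so neither does the generator $u\Pi$ of $J$.) The fix is painless: first observe that $\bar\sigma$ is independent of the choice of generator $\lambda$ of $J$, since any two differ by a unit of $D$ and conjugation by units is trivial on the commutative quotient $D/J$. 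Then take $\lambda=\Pi$ from the cyclic presentation above; for this choice conjugation by $\lambda$ on $O'$ is exactly $\tau$, whose reduction is the non-trivial element of $\mathrm{Gal}(k(\tilde z)/k)$. With $\bar\sigma=\sigma$ thus established, your bimodule computation $z[\lambda]z^{-1}=\tilde z\,\sigma(\tilde z^{-1})[\lambda]=\tilde z^2[\lambda]$ is correct.
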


\begin{proof}
We know $A$ is a division algebra because otherwise, $A$ is isomorphic to $\mathrm{M}_2(F)$ and the Azumaya algebra $\mathrm{M}_2(O)$ would extend $A$. Statements (i) and (ii) follow from \cite[\S 12.8, \S 13.2]{Reiner}.

The ramification degree of $D$ is defined to be the integer $e \ge 1$ such that $J^e = D \pi$. It is shown in \cite[\S 13.3, \S 14.3]{Reiner} that $D/J$ is a division algebra of dimension $f$ over $K = O/O\pi$ for an integer $f$ such that $e f = 4$. If $e = 1$ then $J = D\pi$ and $D/D\pi$ is a central simple algebra. However, then $D$ is an Azumaya algebra over $O$ extending $A$, and we supposed that no such Azumaya algebra exists. Thus $e = 2$ or $e =4$.

If $e = 4$ then $J^4 = D \lambda^4 = D \pi$. Then $\lambda^4/\pi$ would be a unit of $D$, implying that $n(\lambda)^4/n(\pi) = n(\lambda)^4/\pi^2$ is a unit of $O$, which is impossible because $n(\lambda) \in O$ and $\pi$ is a uniformizer in $O$. Therefore $e = f = 2$, which proves (iii).

Finally suppose $z$ and $\tilde{z}$ are as in (iv). Then $F(z) \subset A$ must be a quadratic extension of $F$. If $k(\tilde{z})/k$ is not separable, the characteristic of $k$ must be $2$ and $\tilde{z}^2 \in k$. However, then
\[
n(z) = \mathrm{Norm}_{F(z)/F}(z) = 1
\]
has image
\[
1 = \mathrm{Norm}_{k(\tilde{z})/k} (\tilde{z}) = \tilde{z}^2
\]
in $D/J = k(\tilde{z})$. Since $k$ has characteristic $2$, we get $\tilde{z} = 1$, contradicting the assumption that $k(\tilde{z})/k$ has degree $2$. Thus $k(\tilde{z})/k$ is separable.

Since $F$ has characteristic $0$, we can find some $d$ with $d^2 = \alpha \in F$ and $F(z) = F(d)$. Then $A$ is a two dimensional left vector space over $F(d)$ and the conjugation action of $d$ on $A$ defines a non-trivial $F(d)$ linear automorphism of order $2$. It follows from splitting $A$ into the $\pm 1$ eigenspaces for this automorphism that
\[
A \cong F(z) \oplus F(z)w
\]
for some non-zero $w \in A$ with $dwd^{-1} = -w$, hence conjugation by $d$ carries the quadratic extension $F(w)$ of $F$ to itself. Since $-w$ is a conjugate of $w$ over $F$, we see that $w^2 \in F$, and in fact $\{d^2 ,w^2 \}$ is a Hilbert symbol for $A$ over $F$.

Write $z = a + bd$ for some $a, b \in F$ with $b \ne 0$. Then
\[
w z w^{-1} = a - bd
\]
is the conjugate of $z$ over $k$ in $k(z)$. Since $z$ has norm $1$, we must have $w z w^{-1} = z^{-1}$. Thus
\[
z w z^{-1} = zw wzw^{-1} = z^2 w,
\]
since $w^2 \in F$. Therefore, the characteristic polynomial in $F(z)[t]$ for the conjugation action of $z$ on $A$ as a two-dimensional left $F(z)$-vector space, is $(t - 1) (t - z^2)$. The extension $F(z)$ is quadratic and unramified over $F$ with ring of integers $O' = O[z]$ since $k(\tilde{z})$ is a separable quadratic extension of $k$.

Then $D$ is a rank two $O'$-module inside the two-dimensional $F(z)$-vector space $A$, and $D$ is preserved by conjugation by $z$. It follows that $(t-1)(t-z^2)$ is also the characteristic polynomial for the left $O'$-linear automorphism of $D$ given by conjugation by $z$. Since $D$ is free of rank two over $O'$, we see that the characteristic polynomial for the conjugation action of $z$ on $D/D\pi$ is $(t-1)(t-\tilde{z}^2)$. This action preserves the one-dimensional $k(z)$-subspace $J/D\pi$ of $D/D\pi$ and induces the trivial action on $D/J = k(\tilde{z})$. Therefore we conclude that conjugation by $z$ must induce left multiplication by $\tilde{z}^2$ on $J/D\pi = J/J^2$. This proves (iv).
\end{proof}

\subsection{Tate--Shafarevich groups}\label{TSgroups}

In this subsection we assume the notation of \S \ref{s:ramification}. In particular, we assume that $C$ is a regular projective curve. One should keep in mind that the canonical component in Theorem \ref{non-trivial} is affine and generally singular, so the results of this section apply to the non-singular projective model of the canonical component rather than to the canonical component itself.

Let $O$ be the ring of $S$-integers $O_{{k},S}$ of a number field ${k}$ for some finite set $S$ of finite places of ${k}$. We begin by recalling some results of Stuhler \cite{Stuhler} and Demeyer--Knus \cite{DemeyerKnus} concerning Brauer groups and Tate--Shafarevich groups.

Let $\overline{{k}}$ be an algebraic closure of ${k}$ and let $J(C)$ be the Jacobian of $C$. The group $J(C)(\overline{{k}})$ is, by definition, $\mathrm{Pic}^0(C)(\overline{{k}})$. Let $V({k})$ be the set of all places of ${k}$. For $v \in V({k})$, let $\overline{{k}}_v$ be an algebraic closure of the completion ${k}_v$ that contains $\overline{{k}}$. We can identify $\mathrm{Gal}(\overline{{k}}_v/{k}_v)$ with a decomposition subgroup of $\mathrm{Gal}(\overline{{k}}/{k})$, and there is a restriction map
\[
r_v: H^1(\mathrm{Gal}(\overline{{k}}/{k}),\mathrm{Pic}^0(C)(\overline{{k}})) \to H^1(\mathrm{Gal}(\overline{{k}}_v/{k}_v),\mathrm{Pic}^0(C)(\overline{{k}}_v)).
\]
See \cite[Ch.\ III]{Mil}.

In \cite[Def.\ 1]{Stuhler}, the Tate--Shafarevich group of $C$ relative to $O_{{k},S}$ is defined to be
\[
\Sha({k},O_{{k},S},\mathrm{Pic}^0(C)) = \bigcap_{v \in V_f(S)} \ \mathrm{Ker}(r_v),
\]
where $V_f(S)$ is the set of finite places of ${k}$ not in $S$. The usual definition of the Tate--Shafarevich group of $\mathrm{Pic}^0(C)$ is
\begin{equation}\label{eq:traditional}
\Sha({k},\mathrm{Pic}^0(C)) = \bigcap_{v \in V({k})} \ \mathrm{Ker}(r_v).
\end{equation}
These two definitions are thus related by
\begin{equation}\label{eq:relationSha}
\Sha({k},\mathrm{Pic}^0(C)) = \bigcap_{v \in V_{real}({k}) \cup S} \ \mathrm{Ker} ( r_v |_{\Sha({k},O_{{k},S},\mathrm{Pic}^0(C))}),
\end{equation}
where $V_{real}({k})$ is the set of real places of ${k}$. The following result follows from the proofs of \cite[Thm.\ 1, Thm.\ 2, Thm.\ 3]{Stuhler}:

\begin{thm}[Stuhler]\label{thm:TStheorem}
There is a complex
\begin{equation}
\label{Stuhler}
\mathrm{Br}(O_{{k},S}) \to \mathrm{Br}(\mathcal{C}) \to \Sha({k},O_{{k},S},\mathrm{Pic}^0(C))
\end{equation}
in which $\mathrm{Br}(O_{{k},S}) \to \mathrm{Br}(\mathcal{C}) $ is $f^*$ for $f:\mathcal{C} \to \mathrm{Spec}(O_{{k},S})$ the structure morphism. This complex is a short exact sequence if there is a section $s: \mathrm{Spec}(O_{{k},S}) \to \mathcal{C}$ of $f$. Since $\mathcal{C}$ is projective, such a section exists if and only if $C$ has a point defined over ${k}$.
\end{thm}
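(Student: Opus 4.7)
The plan is to deduce the complex from the Leray spectral sequence for the structure morphism $f : \mathcal{C} \to \mathrm{Spec}(O_{L,S})$ with \'etale coefficients in $\mathbb{G}_m$. First I would compute the higher direct images. Since $f$ is proper with geometrically connected fibers, $R^0 f_* \mathbb{G}_m = \mathbb{G}_m$. The representability of the relative Picard functor for a smooth proper curve gives $R^1 f_* \mathbb{G}_m = \mathrm{Pic}_{\mathcal{C}/O_{L,S}}$. Finally, the stalk of $R^2 f_* \mathbb{G}_m$ at a geometric point is the \'etale $H^2(\mathbb{G}_m)$ of a smooth projective curve over an algebraically closed field, which vanishes by Tsen's theorem. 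Extracting the degree-two piece of the spectral sequence thus yields an exact sequence
\[
\mathrm{Br}(O_{L,S}) \xrightarrow{f^*} \mathrm{Br}(\mathcal{C}) \xrightarrow{\delta} H^1(O_{L,S},\mathrm{Pic}_{\mathcal{C}/O_{L,S}}) \to H^3(O_{L,S},\mathbb{G}_m),
\]
which is the backbone of the target complex and already gives exactness at $\mathrm{Br}(\mathcal{C})$.

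Second I would use the degree exact sequence
\[
0 \to \mathrm{Pic}^0_{\mathcal{C}/O_{L,S}} \to \mathrm{Pic}_{\mathcal{C}/O_{L,S}} \to \mathbb{Z} \to 0
\]
to project the image of $\delta$ onto its $\mathrm{Pic}^0$-component in $H^1(O_{L,S},\mathrm{Pic}^0_{\mathcal{C}/O_{L,S}})$, and then compose with the restriction map to $H^1(L,\mathrm{Pic}^0(C))$ given by passing to the generic fiber. Smooth and proper base change applied at each closed point $v \in \mathrm{Spec}(O_{L,S})$, together with the Hensel property of the strictly henselian local ring, would then show that every class arising from the integral model becomes unramified at every finite place $v \notin S$, so the composite $\beta := r \circ \delta$ lands in the relative Tate--Shafarevich group $\Sha(L, O_{L,S}, \mathrm{Pic}^0(C))$ as defined by Stuhler.

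Third, for the short-exactness under the hypothesis of a section $s$ of $f$, the idea is to exploit $s$ in two ways. The identity $s^* \circ f^* = \mathrm{id}_{\mathrm{Br}(O_{L,S})}$ gives injectivity of $f^*$. Simultaneously, $s$ provides a splitting of the degree map $\mathrm{Pic}_{\mathcal{C}/O_{L,S}} \to \mathbb{Z}$ and of the edge map in the spectral sequence, which combined with a diagram chase forces the map $H^1(O_{L,S},\mathrm{Pic}^0_{\mathcal{C}/O_{L,S}}) \to H^3(O_{L,S},\mathbb{G}_m)$ to vanish and yields surjectivity of $\beta$ onto $\Sha(L, O_{L,S}, \mathrm{Pic}^0(C))$. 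The final assertion that a section exists exactly when $C$ has an $L$-point is then the valuative criterion of properness: because $\mathcal{C}$ is regular and projective over the Dedekind base $\mathrm{Spec}(O_{L,S})$, any point of $C$ over $L$ extends uniquely across each codimension-one point, hence globally.

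The hard part is the second step: identifying the image of $\delta$ precisely with $\Sha(L, O_{L,S}, \mathrm{Pic}^0(C))$ rather than with some coarser or finer subgroup of $H^1(L, \mathrm{Pic}^0(C))$. One must reconcile two pieces of geometry that are a priori distinct. On one hand, $\mathrm{Pic}^0_{\mathcal{C}/O_{L,S}}$ need not agree with the N\'eron model of $\mathrm{Pic}^0(C)$ at finite places $v \in \mathrm{Spec}(O_{L,S})$ of bad reduction (recall $\mathcal{C}$ is only assumed regular, not smooth, over the base). On the other hand, Stuhler's $\Sha(L, O_{L,S}, \mathrm{Pic}^0(C))$ imposes local-triviality conditions at every finite place outside $S$ without reference to any integral structure at the places inside $S$ or at archimedean places. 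Matching these two perspectives, and verifying that the relevant components of $H^3(O_{L,S},\mathbb{G}_m)$ and of $H^1(O_{L,S},\mathbb{Z})$ do not contribute spurious terms, is the technical crux and would require careful use of global duality for the Dedekind scheme $\mathrm{Spec}(O_{L,S})$.
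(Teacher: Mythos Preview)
The paper does not give its own proof of this theorem: it is stated as a result of Stuhler, with the one-line justification ``The following result follows from the proofs of \cite[Thm.\ 1, Thm.\ 2, Thm.\ 3]{Stuhler}.'' So there is no in-paper argument to compare against; the authors are simply quoting the literature.

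Your Leray spectral sequence approach is the standard route to statements of this shape (and is, in outline, what Stuhler does). The computation of $R^i f_* \mathbb{G}_m$ for $i=0,1,2$ is correct for a proper curve with geometrically connected fibers, and the five-term exact sequence you extract is the right backbone. The use of a section $s$ to split $f^*$ and the degree map, and the valuative criterion to pass between $L$-points of $C$ and sections of $\mathcal{C} \to \mathrm{Spec}(O_{L,S})$, are also standard and correct.

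You have also correctly located the genuine content: the identification of the target of $\delta$ with Stuhler's relative $\Sha$. This is not automatic, because $\mathcal{C}$ is only regular over $O_{L,S}$, not smooth, so $\mathrm{Pic}^0_{\mathcal{C}/O_{L,S}}$ and the N\'eron model of $\mathrm{Pic}^0(C)$ can differ at bad fibers, and one must check that the local conditions defining $\Sha(L,O_{L,S},\mathrm{Pic}^0(C))$ match what comes out of the spectral sequence. Stuhler's paper handles exactly this comparison; your sketch is honest in flagging it as the step requiring real work rather than claiming it is formal. In short, your proposal is a faithful reconstruction of the expected argument, and the paper itself offers nothing further to compare it to.
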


\begin{cor}\label{cor:easyStuhler}
Suppose $s$ is a section of $f$ in Theorem \ref{thm:TStheorem}. If $\mathrm{Br}^0_s(\mathcal{C})$ is the kernel of $s^*:\mathrm{Br}(\mathcal{C}) \to \mathrm{Br}(O_{{k},S})$, then $\mathrm{Br}^0_s(\mathcal{C})$ is isomorphic to $\Sha({k},O_{{k},S},\mathrm{Pic}^0(C))$.
\end{cor}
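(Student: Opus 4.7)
The plan is to leverage Theorem~\ref{thm:TStheorem} directly, using the section $s$ to split the short exact sequence it provides. Since $s$ is a section of $f$, we have $f \circ s = \mathrm{id}_{\mathrm{Spec}(O_{L,S})}$, and so applying the contravariant functor $\mathrm{Br}(-)$ gives $s^* \circ f^* = \mathrm{id}_{\mathrm{Br}(O_{L,S})}$. In particular, $f^*$ is a section-split monomorphism, which is much more than the injectivity already asserted by Theorem~\ref{thm:TStheorem}.

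Next, I would invoke Theorem~\ref{thm:TStheorem} to obtain the short exact sequence
\[
0 \to \mathrm{Br}(O_{L,S}) \xrightarrow{f^*} \mathrm{Br}(\mathcal{C}) \xrightarrow{\partial} \Sha(L,O_{L,S},\mathrm{Pic}^0(C)) \to 0,
\]
which holds because the existence of the section $s$ ensures $f^*$ is injective and the complex \eqref{Stuhler} is short exact. Combining this with the splitting $s^*$, standard homological algebra yields a direct sum decomposition
\[
\mathrm{Br}(\mathcal{C}) \;\cong\; f^*\mathrm{Br}(O_{L,S}) \,\oplus\, \ker(s^*) \;=\; f^*\mathrm{Br}(O_{L,S}) \,\oplus\, \mathrm{Br}^0_s(\mathcal{C}).
\]

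Finally, I would identify the two natural descriptions of the complementary summand. On the one hand, the connecting map $\partial$ restricted to $\mathrm{Br}^0_s(\mathcal{C})$ is an isomorphism onto $\Sha(L,O_{L,S},\mathrm{Pic}^0(C))$, because its kernel is $f^*\mathrm{Br}(O_{L,S}) \cap \mathrm{Br}^0_s(\mathcal{C}) = 0$ (using $s^* \circ f^* = \mathrm{id}$) and it is surjective by the exactness of the sequence. Explicitly, every class $[\mathcal{A}] \in \mathrm{Br}(\mathcal{C})$ can be written uniquely as $f^*s^*[\mathcal{A}] + ([\mathcal{A}] - f^*s^*[\mathcal{A}])$, with the second summand in $\mathrm{Br}^0_s(\mathcal{C})$, and $\partial$ sends it to $\partial[\mathcal{A}] \in \Sha(L,O_{L,S},\mathrm{Pic}^0(C))$. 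This gives the desired isomorphism.

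There is no real obstacle here beyond making sure the section-splitting argument is recorded cleanly; the content of the corollary is entirely a consequence of Theorem~\ref{thm:TStheorem} together with the formal fact that a section of $f$ induces a retraction of $f^*$ on Brauer groups. The only subtlety worth flagging is that, because the sequence in Theorem~\ref{thm:TStheorem} is a short exact sequence of abelian groups rather than of modules with additional structure, the splitting yields only an isomorphism of abelian groups, which is all that is claimed.
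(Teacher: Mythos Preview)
Your argument is correct and is precisely the standard splitting argument the paper has in mind; the paper states this corollary without proof, as an immediate formal consequence of Theorem~\ref{thm:TStheorem} together with the retraction $s^*\circ f^*=\mathrm{id}$, and your write-up fills in exactly those details.
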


We also need the following result, which was proven by Demeyer and Knus in \cite[p. 228-229]{DemeyerKnus}. They remark that this result goes back to work of E.\ Witt.

\begin{thm}[Demeyer--Knus, Witt]\label{thm:realBrauer}
Suppose that $Y$ is a complete non-singular irreducible curve over the real numbers $\mathbb{R}$ and let $\{Y_i\}_{i = 1}^m$ be the set of connected components of the topological space $Y(\mathbb{R})$ of real points of $Y$.
\begin{enumerate}

\item Each $Y_i$ is topologically isomorphic to a real circle.

\item For each $Y_i$ pick a point $y_i \in Y_i$. If $A$ is an Azumaya algebra on $Y$, then $A \otimes_{Y} k(y_i)$ is an Azumaya algebra over $\mathbb{R}$.

\item Assuming (2), let $c_i(A)$ be the class of $A \otimes_{Y} k(y_i)$ in the Brauer group
\[
\Br(k(y_i)) \cong \Br(\mathbb{R}) \cong \mathbb{Z}/2.
\]
This does not depend on the choice of the point $y_i$ on $Y_i$ and the map
\[
\Br(Y) \to \prod_{i = 1}^m \Br(Y_i) \cong (\mathbb{Z}/2)^m
\]
sending the class of $A$ in $\Br(Y)$ to $(c_i(A))_{i = 1}^m$ is an isomorphism.

\end{enumerate}
\end{thm}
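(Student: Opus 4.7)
The plan is to dispatch parts (1) and (2) quickly, then establish part (3) by combining a local-constancy argument with a Galois-cohomology computation.

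For part (1), I would view $Y(\mathbb{C})$ as a compact Riemann surface equipped with the antiholomorphic involution $\sigma$ induced by complex conjugation, so that $Y(\mathbb{R}) = Y(\mathbb{C})^{\sigma}$. Near any fixed point one can choose a holomorphic coordinate $z$ in which $\sigma$ acts by $z \mapsto \bar{z}$, so the fixed set is locally the real axis; thus $Y(\mathbb{R})$ is a smooth compact $1$-manifold and hence a finite disjoint union of circles, which is the classical Harnack--Klein picture of real algebraic curves. Part (2) is then immediate: a real point $y_i$ has residue field $\mathbb{R}$, and the definition of an Azumaya algebra forces $A \otimes_{\mathcal{O}_Y} k(y_i)$ to be central simple over $\mathbb{R}$.

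For well-definedness of $c_i(A)$ in part (3), I would show that the function $y \mapsto [A \otimes k(y)] \in \Br(\mathbb{R}) \cong \mathbb{Z}/2$ is locally constant on $Y(\mathbb{R})$ in its real-analytic topology, so that connectedness of $Y_i$ forces it to be constant there. In the quaternion case this is most concretely seen by presenting $A$ locally via a Hilbert symbol $\left(\frac{\alpha,\beta}{\mathcal{O}_Y}\right)$ with $\alpha,\beta$ regular units and noting that the specialization at a real point depends only on the signs of $\alpha(y),\beta(y) \in \mathbb{R}^{\times}$, which are locally constant on the open set where both are nonzero. For general rank, one can instead appeal to the local classification of Azumaya algebras by $\PGL_n$-torsors combined with discreteness of $\Br(\mathbb{R})$. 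Assembling the $c_i(A)$ yields a group homomorphism $\Br(Y) \to (\mathbb{Z}/2)^m$.

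To prove this homomorphism is an isomorphism, my plan is to apply the Hochschild--Serre spectral sequence for the Galois cover $Y_{\mathbb{C}} = Y \otimes_{\mathbb{R}} \mathbb{C} \to Y$ with group $G = \Gal(\mathbb{C}/\mathbb{R})$, using Tsen's theorem to obtain $\Br(Y_{\mathbb{C}}) = 0$. The low-degree exact sequence then expresses $\Br(Y)$ as an extension built from $H^2(G, \mathcal{O}(Y_{\mathbb{C}})^{\times})$ and $H^1(G, \mathrm{Pic}(Y_{\mathbb{C}}))$; combining this with the degree sequence $0 \to \mathrm{Pic}^0(Y_{\mathbb{C}}) \to \mathrm{Pic}(Y_{\mathbb{C}}) \to \mathbb{Z} \to 0$ and the classical computation of the $G$-cohomology of the real Jacobian, whose group of topological components has rank tied to $m$, yields an abstract isomorphism $\Br(Y) \cong (\mathbb{Z}/2)^m$.

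The main obstacle is matching this abstract isomorphism with the geometric specialization map built in part (3). I expect the cleanest route is Witt's direct approach to surjectivity: use weak approximation to find a rational function on $Y$ whose signs along the ovals realize any prescribed vector in $\{\pm 1\}^m$, combine it with the constant $-1$ to form a quaternion symbol, and adjust at zeros and poles so that the resulting Brauer class extends to an Azumaya algebra on all of $Y$ with the desired fibrewise invariants. Injectivity then follows either by rank counting against the abstract computation, or by observing that an Azumaya algebra split at every real point has trivial class after base change to $\mathbb{C}$ (where $\Br = 0$) and lies in the kernel of all real residue maps, which the spectral sequence shows forces triviality.
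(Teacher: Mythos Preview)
The paper does not give its own proof of this theorem; it merely cites Demeyer--Knus \cite[p.~228--229]{DemeyerKnus} and remarks that the result goes back to Witt. So there is no detailed argument in the paper to compare against.

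Your outline is sound and in fact follows the classical line of argument. Parts (1) and (2) are dispatched correctly. For part (3), the local-constancy argument for well-definedness is fine, and the strategy of combining Tsen's theorem with the Hochschild--Serre spectral sequence to compute $\Br(Y)$ abstractly is exactly the standard route. You correctly flag the genuine work: matching the abstract computation with the explicit specialization map. Your proposed surjectivity argument (build $(f,-1)$ for a rational function $f$ with prescribed signs on the ovals) is Witt's original idea; the point that needs care is ensuring the class extends over the zeros and poles of $f$. Concretely, the tame symbol of $(f,-1)$ at $P$ is the class of $(-1)^{\ord_P(f)}$ in $k(P)^*/(k(P)^*)^2$, which is automatically trivial at complex points but forces $\ord_P(f)$ to be even at any real zero or pole. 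The cleanest fix is to use approximation to arrange that $f$ has no real zeros or poles at all, which is possible precisely because one can prescribe the sign of $f$ on each oval independently while pushing the divisor of $f$ into the complex locus. Once surjectivity is established, injectivity follows by comparing orders, as you say.
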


The following records what we will use to prove our results.

\begin{cor}\label{cor:morecor}
Suppose $S = \emptyset$ in Corollary \ref{cor:easyStuhler}, i.e., $O_{{k},S} = O_{k}$. Let $[\mathcal{A}]$ be an element of $\mathrm{Br}(\mathcal{C})$ represented by a quaternion Azumaya algebra over $\mathcal{C}$. Then:
\begin{enumerate}
\item The class $[\mathcal{A}]$ has order $1$ or $2$.

\item The image of $[\mathcal{A}]$ in $\Sha({k},O_{{k},S},\mathrm{Pic}^0(C))$ defines an element in the subgroup $\Sha({k},\mathrm{Pic}^0(C))$ if and only if for every real place $v \in V_{real}({k})$, the pullback $\mathcal{A}_v$ of $\mathcal{A}$ to an Azumaya algebra on $C \otimes_{k} {k}_v$ is trivial.

\item The conclusion of (2) holds if and only if for every point $y$ of $\mathcal{C}(\mathbb{R})$ the restriction of $\mathcal{A}$ to $y$ defines the matrix algebra $\mathrm{M}_2(k(y)) \cong \mathrm{M}_2(\mathbb{R})$ rather than the real quaternions $\mathbb{H}_\R$ over $k(y) \cong \mathbb{R}$.

\end{enumerate}
\end{cor}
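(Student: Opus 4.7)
The plan is to handle the three parts in sequence, with (1) following from standard Azumaya theory, (3) from Theorem \ref{thm:realBrauer}, and (2) combining \eqref{eq:relationSha} with the naturality of Stuhler's construction under base change at real places. For (1), the canonical quaternion involution $\alpha \mapsto \bar{\alpha}$ gives an isomorphism $\mathcal{A} \cong \mathcal{A}^{op}$ of $\mathcal{O}_{\mathcal{C}}$-algebras, and the standard identity $\mathcal{A} \otimes_{\mathcal{O}_{\mathcal{C}}} \mathcal{A}^{op} \cong \End_{\mathcal{O}_{\mathcal{C}}}(\mathcal{A})$ then yields $2[\mathcal{A}] = 0$ in $\Br(\mathcal{C})$, so $[\mathcal{A}]$ has order $1$ or $2$.

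For (3), a real point $y \in \mathcal{C}(\R)$ is an $O_L$-morphism $\Spec(\R) \to \mathcal{C}$, and any such morphism factors through $\Spec(L_v)$ for some real place $v$ of $L$; this identifies $\mathcal{C}(\R)$ with $\bigsqcup_{v \in V_{real}(L)} (C \otimes_L L_v)(\R)$. Applying Theorem \ref{thm:realBrauer}(3) to each smooth projective curve $C \otimes_L L_v$, the Azumaya algebra $\mathcal{A}_v$ is trivial in $\Br(C \otimes_L L_v)$ if and only if $\mathcal{A}_v \otimes k(y)$ is trivial in $\Br(\R) = \Z/2$ for every real point $y$ of $C \otimes_L L_v$. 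For a quaternion algebra, triviality in $\Br(\R)$ means the restriction is $\M_2(\R)$ rather than $\mathbb{H}_{\R}$. Assembling the condition over all real places yields (3).

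For (2), applying \eqref{eq:relationSha} with $S = \emptyset$ reduces the problem to showing that, for each real place $v$, the vanishing of $r_v(\beta([\mathcal{A}]))$ in $H^1(\Gal(\overline{L}_v/L_v), \mathrm{Pic}^0(C)(\overline{L}_v))$ is equivalent to triviality of $[\mathcal{A}_v]$ in $\Br(C \otimes_L L_v)$. By naturality of Stuhler's construction under the base change $O_L \to L_v$, this cohomology class agrees with the image of $[\mathcal{A}_v]$ under the local analogue of $\beta$, which arises from the Hochschild--Serre spectral sequence associated to $C \otimes_L \C \to C \otimes_L L_v$. Since $\Br(C \otimes_L \C) = 0$ by Tsen's theorem, the spectral sequence furnishes an injection $\Br(C \otimes_L L_v)/\Br(L_v) \hookrightarrow H^1(\Gal(\C/\R), \mathrm{Pic}^0(C \otimes_L \C))$; combining this with part (3) and Theorem \ref{thm:realBrauer} then gives the required equivalence. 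The main obstacle is controlling the potential contribution of $\Br(L_v) = \Z/2$ at real places where $(C \otimes_L L_v)(\R) \neq \emptyset$, so that the iff in (2) holds precisely rather than merely modulo the constant quaternion class $\mathbb{H}_{\R}$; this is precisely where a careful use of part (3), together with the global origin of $\mathcal{A}$ on the integral model $\mathcal{C}$, is required.
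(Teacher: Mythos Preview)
Your treatments of parts (1) and (3) are correct and are the natural arguments. The paper states this corollary without proof, so there is no alternative approach to compare against; your framework for (2)---relation \eqref{eq:relationSha} with $S=\emptyset$, naturality of Stuhler's map under base change to $L_v$, and the local Hochschild--Serre sequence combined with Tsen's theorem---is the right one.

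However, your proof of (2) has a genuine gap that you yourself flag but do not close. The local complex $\Br(\R)\to\Br(C\otimes_L L_v)\xrightarrow{\beta_v} H^1(\Gal(\C/\R),\mathrm{Pic}^0)$ only shows that $r_v(\beta([\mathcal{A}]))=0$ is equivalent to $[\mathcal{A}_v]$ lying in the image of $\Br(\R)$, which (when $(C\otimes_L L_v)(\R)\neq\emptyset$) is $\{0,\,f_v^*[\mathbb{H}_\R]\}$, not just $\{0\}$. Your appeal to ``the global origin of $\mathcal{A}$ on the integral model'' is not an argument, and in fact cannot be made into one at the stated level of generality: take $B\in\Br(O_L)$ a quaternion class ramified at exactly two real places of $L$ (hence unramified at all finite places, so $B$ lies in $\Br(O_L)$), and set $\mathcal{A}=f^*B$. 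Then $\beta([\mathcal{A}])=0$ since $[\mathcal{A}]$ is in the image of $f^*$, so certainly $\beta([\mathcal{A}])\in\Sha(L,\mathrm{Pic}^0(C))$; yet at any real place $v$ where $B$ ramifies and $(C\otimes_L L_v)(\R)\neq\emptyset$, Theorem~\ref{thm:realBrauer} gives $[\mathcal{A}_v]=f_v^*[\mathbb{H}_\R]\neq 0$. Thus the ``only if'' direction of (2), read literally for an arbitrary quaternion Azumaya algebra on $\mathcal{C}$, does not follow. What your argument does establish cleanly is the equivalence of $\beta([\mathcal{A}])\in\Sha(L,\mathrm{Pic}^0(C))$ with the condition that, for each real $v$, the class $[\mathcal{A}_v]$ is constant---i.e., restricts to the same element of $\Br(\R)$ on every connected component of $(C\otimes_L L_v)(\R)$.
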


\subsection{Proof of Theorem \ref{thm:MainKnotIntegral}}

As before, let $M = S^3 \ssm K$ be a hyperbolic knot $K$ in $S^3$, and let 
$\Gam = \pi_1(M)$. Let $\mathfrak{C}\subset X(K)$ be the canonical curve of $M$ over $\C$, and let $k$ be the associated field of constants. Let $C = C_M$ be a canonical component in $X(K)_k$, with $\wt{C}$ be the the normalization of a projective closure of $C$. Then $\wt{C}$ is a smooth geometrically irreducible curve over ${k}$.

Let $\mathcal{C}_S$ be a regular projective integral model of $\wt{C}$ over the ring $O_{{k},S}$ of $S$-integers of ${k}$ for some finite set of finite places $S$ of ${k}$. We have a canonical quaternion algebra $A_{k(C)}$ over the function field
\[
k(C) \cong k(\wt{C}) \cong k(\mathcal{C}_S).
\]
This algebra was constructed as the $k(C)$ subalgebra of $\mathrm{M}_2(F)$ generated by the image of a representation $\rho:\Gamma \to \mathrm{SL}_2(F)$ whose character $\chi_{\rho}$ defines the generic point of $C$, where $F$ is a sufficiently large finite extension of $k(C)$.

We now need the following lemma, which should be compared with Proposition \ref{prop:IdealExtension}(2). Indeed the proof proceeds exactly as before until the last step.

\begin{lem}\label{lem:notinfinite}
Suppose $P$ is a codimension one point of $\mathcal{C}_S$ such that $A_{k(C)}$ does not extend over $P$. Then $P$ is not a point at infinity in the sense that the trace $\chi_{\rho}(\gamma)$ lies in the local ring $O_P = O_{\mathcal{C}_S,P}$ of $P$ on $\mathcal{C}_S$ for all group elements $\gamma \in \Gamma$.
\end{lem}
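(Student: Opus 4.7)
The plan is to prove the contrapositive: if $P$ is a codimension one point of $\mathcal{C}_S$ that is a point at infinity, meaning there exists some $g \in \Gamma$ with $\ord_P(I_g) < 0$, then $A_{k(C)}$ extends to an Azumaya algebra over $P$. The argument will follow the same structure as the treatment of ideal points in the proof of Proposition \ref{prop:IdealExtension}, but since the residue field $k(P)$ is now of transcendence degree one over its field of constants (which may be a finite field of characteristic $2$), the tame symbol criterion from Theorem \ref{thm:encyclo}(8) is not sufficient in its na\"{\i}ve form. The key new ingredient will be the Hensel-type extension criterion of Proposition \ref{prop:Parimala-Suresh}.

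First, I would fix an element $g \in \Gamma$ for which $\ord_P(I_g) < 0$. Since $I_g$ then has a genuine pole at $P$, it is certainly not identically $\pm 2$ on $\wt{C}$, so Lemma \ref{non-commuting} produces an element $h \in \Gamma$ such that $\beta := I_{[g,h]} - 2$ is not identically zero on $\wt{C}$. Following Corollary \ref{hilberttaut}, the pair $(g,h)$ yields a Hilbert symbol
\[
A_{k(C)} \cong \left(\frac{f_g,\,\beta}{k(C)}\right), \qquad f_g = I_g^2 - 4,
\]
for the canonical quaternion algebra over $k(C)$.

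Next, exactly as in the proof of Proposition \ref{prop:IdealExtension}, I would replace $f_g$ by
\[
\alpha_g \;=\; \frac{f_g}{I_g^{2}} \;=\; 1 - \frac{4}{I_g^{2}},
\]
which differs from $f_g$ by the square $I_g^2 \in k(C)^\times$ and so defines the same class of Hilbert symbol. The crucial computation is then
\[
\ord_P(1-\alpha_g) \;=\; \ord_P\!\left(\frac{4}{I_g^{2}}\right) \;=\; 2\,\ord_P(2) \,-\, 2\,\ord_P(I_g).
\]
Because $\ord_P(I_g) \le -1$, the right-hand side is at least $2\,\ord_P(2)+2$, which strictly exceeds $2\,\ord_P(2)$. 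This is precisely the hypothesis of Proposition \ref{prop:Parimala-Suresh}, applied to the Hilbert symbol $\{\alpha_g, \beta\}$ at the point $P$, and so $A_{k(C)}$ extends to an Azumaya algebra on the local ring $\mathcal{O}_P$. By Lemma \ref{lem:parnice} this is the desired conclusion.

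The principal subtlety, and the reason the lemma requires a separate argument rather than a direct appeal to the earlier ideal point calculation, is the possible failure of the tame symbol test when $\mathrm{char}\,k(P) = 2$; the step that would be routine in characteristic zero, namely deducing that $\alpha_g$ is a square in the completion $\widehat{\mathcal{O}}_P$, must instead be obtained via Hensel's lemma in the strong form encoded in Proposition \ref{prop:Parimala-Suresh}. Once this is in place, the rest of the argument is a mild adaptation of the ideal-point analysis already carried out on $\wt{C}$, with the discrete valuation ring $\mathcal{O}_P$ on the arithmetic surface $\mathcal{C}_S$ playing the role of the local ring at an ideal point of $\wt{C}$.
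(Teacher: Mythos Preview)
Your proof is correct and follows essentially the same approach as the paper: both arguments pick $g$ with $\ord_P(I_g)<0$, replace $f_g$ by $\alpha_g = 1 - 4/I_g^2$ in the Hilbert symbol, compute $\ord_P(1-\alpha_g) = 2\,\ord_P(2) - 2\,\ord_P(I_g) > 2\,\ord_P(2)$, and then invoke Proposition~\ref{prop:Parimala-Suresh}. The only cosmetic differences are that the paper frames this as a proof by contradiction rather than the contrapositive, and your final appeal to Lemma~\ref{lem:parnice} is redundant since Proposition~\ref{prop:Parimala-Suresh} already delivers the extension over $P$ directly.
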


\begin{proof}
Recall that the algebra $A_{k(C)}$ must be a division algebra over $k(C)$, since otherwise it would trivially extend over $P$. As in the proof Proposition \ref{prop:IdealExtension}(2), suppose for a contradiction that we can find $\gamma\in\Gam$ with $\chi_{\rho}(\gamma) \not \in O_P$ and a Hilbert symbol for $A_{k(C)}$ over $k(C)$ of the form $\{\alpha',\beta\}$ with $\alpha' = \chi_{\rho}(\gamma)^2 - 4$ and $\beta \in k(C)$. Since $\chi_{\rho}(\gamma) \not \in O_P$, $\chi_{\rho}(\gamma) \ne 0$.

As before, we can multiply $\alpha'$ by $\chi_{\rho}(\gamma)^{-2}$ to give another Hilbert symbol $\{\alpha,\beta\}$ for $A_{k(C)}$ with
\[
\alpha = 1 - \frac{4}{\chi_{\rho}(\gamma)^2}.
\]
We now finish the proof by noting that
\[
\mathrm{ord}_P \left(\frac{4}{\chi_{\rho}(\gamma)^2}\right) = 2\, \mathrm{ord}_P(2) - 2\, \mathrm{ord}_P(\chi_{\rho}(\gamma)) > 2\, \mathrm{ord}_P(2).
\]
Indeed, Proposition \ref{prop:Parimala-Suresh} implies that $A_{k(C)}$ extends over $P$, contrary to our hypothesis.
\end{proof}

Now we can prove the main technical theorem that connects the set $S$ in Theorems \ref{thm:MainDehn} and \ref{thm:MainKnotIntegral} to the reduction of the Alexander polynomial modulo rational primes.

\begin{thm}\label{thm:filter}
Suppose that $P$ is a codimension one point of $\mathcal{C}_S$ over which $A_{k(C)}$ does not extend. Let $k(P)$ be the residue field of the local ring $O_P$. Then there is an element $\tilde{z}$ of an algebraic closure $\overline{k(P)}$ of $k(P)$ with the following properties:
\begin{enumerate}

\item[i.] The extension $k(P)(\tilde{z})$ is separable and quadratic over $k(P)$ and $\tilde{z}$ has norm $1$ to $k(P)$.

\item[ii.] Considering the Alexander polynomial $\Delta_K(t)$ of $K$ as an element of $\overline{k(P)}[t, t^{-1}]$, we have that $\Delta_K(\tilde{z}^2)=0$.

\end{enumerate}
\end{thm}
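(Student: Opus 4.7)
The plan is to work in the completion of $k(C)$ at $P$ and use Proposition \ref{prop:structure} to analyze the resulting local division algebra. By Lemma \ref{lem:parnice} combined with the hypothesis that $A_{k(C)}$ does not extend over $P$, the algebra $A_{k(C)_P} = A_{k(C)} \otimes_{k(C)} k(C)_P$ is a division quaternion algebra over $k(C)_P$. Proposition \ref{prop:structure} supplies the unique maximal $\hat{O}_P$-order $D \subset A_{k(C)_P}$, its maximal ideal $J = D\lambda$ with $J^2 = D\pi$ (for $\pi$ a uniformizer of $\hat{O}_P$), and the separable quadratic extension $D/J$ of $k(P)$. Lemma \ref{lem:notinfinite} gives $\chi_\rho(\gamma) \in O_P$ for every $\gamma \in \Gamma$; combined with the fact that $P_C(\gamma)$ has reduced norm $1$, Proposition \ref{prop:structure}(i) then gives $P_C(\Gamma) \subset D$.

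Let $\mu \in \Gamma$ be a meridian of $K$. I propose $\tilde{z} = \overline{P_C(\mu)} \in D/J$; this element automatically has norm $1$ to $k(P)$. Because $(D/J)^\times$ is abelian, $\overline{P_C}: \Gamma \to (D/J)^\times$ factors through $\Gamma^{ab} = \Z = \langle \mu \rangle$, so the image of $\overline{P_C}$ is generated by $\tilde{z}$. The crux is verifying (i) $k(P)(\tilde{z}) = D/J$ (so by Proposition \ref{prop:structure}(iv) this extension is separable quadratic), and (ii) $\Delta_K(\tilde{z}^2) = 0$.

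To extract both conditions, I would analyze the filtration $1 + J \supset 1 + J^2 \supset \cdots$: whenever $P_C(N) \subset 1 + J^k$ (with $N = [\Gamma, \Gamma]$), define $\psi_k: N \to J^k/J^{k+1}$ by $\psi_k(\gamma) = P_C(\gamma) - 1 \bmod J^{k+1}$. The identity $(1+a)(1+b) = 1 + a + b + ab$ with $ab \in J^{2k}$ shows $\psi_k$ is a group homomorphism, and since the target is abelian, $\psi_k$ factors through the Alexander module $N^{ab}$ (a finitely generated torsion $\Z[t^{\pm 1}]$-module with characteristic ideal $(\Delta_K)$). Moreover $\psi_k$ is $\mu$-equivariant, with $\mu$ acting on $J^k/J^{k+1}$ by conjugation by $P_C(\mu)$. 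The centrality of $\pi$ together with the identifications $J^{2\ell} = D\pi^\ell$ and $J^{2\ell+1} = D\pi^\ell \lambda$ show that conjugation by any $z \in D$ is trivial on $J^k/J^{k+1}$ for even $k$ (it reduces to conjugation on the commutative algebra $D/J$), while for odd $k$ it reduces to the conjugation action on $J/J^2$.

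To rule out $\tilde{z} \in k(P)$: the norm-$1$ condition then forces $\tilde{z}^2 = 1$, so $P_C(\mu) = \epsilon(1+v)$ with $\epsilon^2 = 1$ and $v \in J$, and conjugation by $P_C(\mu)$ becomes trivial on every $J^k/J^{k+1}$. A nonzero $\psi_k$ would therefore make $1$ an eigenvalue of $\mu$ on $N^{ab} \otimes \overline{k(P)}$, forcing $\Delta_K(1) = 0$ in $\overline{k(P)}$; but $\Delta_K(1) = \pm 1$ is nonzero in every field. Hence every $\psi_k$ vanishes, and Krull's intersection theorem yields $P_C(N) \subset \bigcap_k (1 + J^k) = \{1\}$, contradicting the faithfulness of $P_C$ (Lemma \ref{taut_faithful}) since $\Gamma$ is non-abelian. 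Thus $k(P)(\tilde{z}) = D/J$, and Proposition \ref{prop:structure}(iv) gives (i) together with the identification of the conjugation action of $P_C(\mu)$ on $J/J^2$ (and on $J^k/J^{k+1}$ for every odd $k$) as multiplication by $\tilde{z}^2$. Running the filtration argument a second time, faithfulness of $P_C$ again forces some $\psi_k$ to be nonzero; if the smallest such $k$ were even the same $\Delta_K(1) = 0$ contradiction would arise, so there must be an odd $k$ with $\psi_k \neq 0$, and its image exhibits $\tilde{z}^2$ as an eigenvalue of $\mu$ on the Alexander module, yielding $\Delta_K(\tilde{z}^2) = 0$. The principal obstacle is precisely the exclusion of the degenerate case $\tilde{z} \in k(P)$, which is where Krull's intersection theorem, the faithfulness of $P_C$, and the arithmetic input $\Delta_K(1) = \pm 1$ must be combined.
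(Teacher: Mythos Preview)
Your proof is correct and follows the same overall architecture as the paper's: pass to the completion, use Proposition~\ref{prop:structure} to get the maximal order $D$ with its $J$-adic filtration, identify $\tilde z$ as the image of a meridian in $D/J$, and locate the first level at which $[\Gamma,\Gamma]$ has nonzero image in the associated graded. Your smallest $k$ with $\psi_k\neq 0$ is exactly the paper's largest $s$ with $W_s$ abelian, and the endgame (equivariance under conjugation by $P_C(\mu)$, the identification of the action on odd graded pieces with multiplication by $\tilde z^2$, and the Fitting-ideal argument giving $\Delta_K(\tilde z^2)=0$) is the same.

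The one genuine difference is how you exclude the two degenerate cases. To rule out $\tilde z\in k(P)$ and to rule out even $k$, you use the arithmetic input $\Delta_K(1)=\pm 1$: a nonzero $\mu$-equivariant map to a graded piece with trivial $\mu$-action would force $1$ to be a root of $\Delta_K$ over $k(P)$. The paper instead argues group-theoretically: in either degenerate situation the extension $1\to Q_s\to W_{s+1}\to W_s\to 1$ is central with $W_s$ cyclic, hence $W_{s+1}$ is abelian, contradicting the choice of $s$. Both arguments are valid; the paper's works for any group with infinite cyclic abelianization and faithful tautological representation, while yours trades that generality for the knot-specific fact $\Delta_K(1)=\pm 1$. (Your intermediate claim that $\tilde z\in k(P)$ forces $\tilde z^2=1$ is correct---the canonical involution on $A$ preserves $D$ and $J$ and descends to a $k(P)$-automorphism of $D/J$ fixing $k(P)$, so $\tilde z\cdot\bar\sigma(\tilde z)=\overline{n(P_C(\mu))}=1$ gives $\tilde z^2=1$---though you might spell this out.)
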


\begin{proof}
Lemma \ref{lem:notinfinite} implies that $P$ is not a point at infinity, in the sense that $\chi_{\rho}(\gamma)$ lies in $O_P$ for all $\gamma \in \Gamma$. Non-degeneracy of the quadratic form associated with the trace $\tr:A_{k(C)} \to k(C)$ now shows that the $O_P$ subalgebra of $\mathrm{M}_2(F)$ generated by the image of $\rho:\Gamma \to \mathrm{SL}_2(F)$ is contained in a finitely generated $O_P$ submodule of $A_{k(C)}$. Therefore this subalgebra is an $O_P$ order $D_0$ in $A_{k(C)}$ that has rank $4$ as a free $O_P$-module.

Let $\hat{O}_P$ be the completion of $O_P$ and $\hat{F}$ be the fraction field of $\hat{O}_P$. Then $\hat{A} = A_{k(C)} \otimes_{O_P} \hat{F}$ is a quaternion algebra over $\hat{F}$. This quaternion algebra cannot be extended to an Azumaya algebra over $\hat{O}_P$, since otherwise $A_{k(C)}$ could be extended over $P$ by Lemma \ref{lem:parnice}. We can then pick a maximal $\hat{O}_P$-order $D$ in $\hat{A}$ containing $D_0$. Since $\rho$ has image in $\mathrm{SL}_2$, we conclude that $\rho$ gives an injective homomorphism $\rho:\Gamma \to D^1$ to the multiplicative group of units in $D$ with reduced norm $1$.

We now let $J$ be the unique maximal two-sided ideal of $D$ described in Proposition \ref{prop:structure}. For $s \ge 1$, let $U_s$ be the image of $D^1$ in $D/ J^s$ and $W_s$ be the image of $\rho(\Gamma)$ in $U_s$. Since $D/J$ is a quadratic field extension of $k(P)$, we know that $W_1$ is abelian. There is an exact sequence
\[
1 \to E_{s+1}\to U_{s+1} \to U_s \to 1,
\]
where $U_{s+1} \to U_s$ is reduction modulo $J^s$ and $E_{s+1}$ is the subgroup of elements of $(1 + J^s) / (1 + J^{s+1})$ sent to $1$ under the reduced norm.

Let $s$ be the largest integer such that the group $W_s$ is abelian. Since $\Gamma$ embeds into $D^1$ and $\Gamma$ is not abelian, we know that $s \ge 1$ and that $s$ must be finite. Note that $W_s$ is in fact cyclic, since $\Gamma$ has cyclic abelianization.

We now have an exact sequence
\[
1 \to (E_{s+1} \cap W_{s+1}) \to W_{s+1} \to W_s \to 1,
\]
where $W_{s+1}$ is not abelian and $W_s$ is cyclic. Then $E_{s+1} \cap W_{s+1} = Q_s$ is abelian, since it is a subgroup of $(1 + J^s)/(1 + J^{s+1})$. The action of $W_s$ on $(1 + J^s)/(1+ J^{s+1})$ by conjugation factors through the reduction map $W_s \to W_1$. Thus $W_s$ is cyclic and $W_{s+1}$ is not abelian, so we conclude that $W_1$ is not contained in $k(P)$ and the action of $W_1$ on $Q_s = E_{s+1} \cap W_{s+1}$ is not trivial. In particular, the action of $U_1 = (D/J)^1$ on $(1 + J^s)/(1 + J^{s+1})$ must be non-trivial.

We now choose a generator $g$ for $\Gamma$ modulo its commutator subgroup $\Gamma' = [\Gamma,\Gamma]$. Let $z = \rho(g) \in D^1$ and $\tilde{z}$ be the image of $z$ in $(D/J)^1$. Then $\tilde{z}$ generates $W_1$ and, since $W_1$ is not contained in $k(P)$, the quadratic extension $D/J$ of $k(P)$ must be generated over $k(P)$ by $\tilde{z}$. Proposition \ref{prop:structure} then implies that $k(P)(\tilde{z})$ is a separable quadratic extension of $k(P)$.

For any $s \ge 1$ we have an isomorphism of groups
\[
(1 + J^s)/(1 + J^{s+1}) = J^s/J^{s+1}
\]
respecting the conjugation action of $z$. Then $J^2 = D\pi_P$, where $\pi_P$ is a uniformizer in $O_P$ and $\pi_P$ commutes with $z$. Since $W_1$ acts non-trivially on $Q_s \subset (1 + J^s)/(1 + J^{s+1}) = J^s/J^{s+1}$, we conclude from Proposition \ref{prop:structure}(iv) that $s$ must be odd. Furthermore the action of $\tilde{z} \in W_1$ corresponds to conjugation by $z$, which in turn corresponds to left multiplication by $\tilde{z}^2$ on the one-dimensional $k(P)(\tilde{z})$-vector space $J^s/J^{s+1}$.
 
Choose a non-trivial element
\[
h \in Q_s \subset (1 + J^s)/(1 + J^{s+1}) = J^s/J^{s+1}.
\]
We can define a map $\nu:Q_s \to k(P)(\tilde{z}) $ by $h' = \nu(h')\cdot h$ with respect to the structure of $J^s/J^{s+1}$ as a one-dimensional vector space over $k(P)(\tilde{z})$. Then $\nu$ is a group homomorphism. The commutator subgroup
\[
\Gamma' = [\Gamma,\Gamma]
\]
has trivial image in $W_s$, since $W_s$ is an abelian quotient of $\Gamma$. Therefore, the homomorphism $\Gamma \to W_{s+1}$ sends $\Gamma'$ to $Q_s$.

Restricting $\nu$ to the image of $\Gamma'$ gives a homomorphism $r: \Gamma' \to k(P)(\tilde{z})$. Here $r$ must be non-trivial, since $W_{s+1}$ is not abelian. We have shown that under $r$, the conjugation action of $z$ on $\Gamma'$ corresponds to left multiplication by $\tilde{z}^2$ on $k(P)(\tilde{z})$. Since $k(P)(\tilde{z})$ is abelian, $r$ factors through a non-trivial homomorphism $\overline{r}:V \to k(P)(\tilde{z})$, where $V = \Gamma'/[\Gamma',\Gamma']$.

Now recall that $V$ is a finitely generated torsion module for the group ring
\[
\mathbb{Z}[\Gamma/\Gamma'] = \mathbb{Z}[t,t^{-1}].
\]
The Alexander polynomial $\Delta_K(t) \in \mathbb{Z}[t,t^{-1}]$ is a generator of the $0^{th}$ Fitting ideal of $V$. General properties of Fitting ideals (e.g., see \cite[p.\ 671]{DummitFoote}) imply that the image of $\Delta_K(t)$ in $k(P)(\tilde{z})[t,t^{-1}]$ is a generator for the $0^{th}$ Fitting ideal of $V \otimes_{\mathbb{Z}} k(P)(\tilde{z})$ as a module for $k(P)(\tilde{z})[t,t^{-1}]$. Since $V \otimes_{\mathbb{Z}} k(P)(\tilde{z})$ is a finitely generated torsion module for the pid $k(P)(\tilde{z})[t,t^{-1}]$, we conclude $V$ is finite dimensional over $k(P)(\tilde{z})$ and the $0^{th}$ Fitting ideal is generated by the characteristic polynomial associated with the action of the generator $g$ of $\Gamma/\Gamma'$.

Here $z = \rho(g) \in D^1$ shows that the action of $g$ comes from conjugation by $z$. We showed that conjugation by $z$ on $V$ corresponds to left multiplication by $\tilde{z}^2$ on $k(P)(\tilde{z})$ under the non-trivial homomorphism
\[
\overline{r}: V = \Gamma'/[\Gamma',\Gamma'] \to k(P)(\tilde{z}).
\]
It follows that $\tilde{z}^2$ must be a root of the characteristic polynomial for the action of $g$ on $V$, so $\tilde{z}^2$ is a root in $\overline{k(P)}$ of $\Delta_K(t)$. This completes the proof of the theorem.
\end{proof}

\begin{cor}
\label{cor:almostdone}
With the hypotheses and notation of Theorem \ref{thm:filter}, let $\mathbb{F}$ be the prime subfield of $k(P)$, so either $\mathbb{F} = \mathbb{Q}$ or $\mathbb{F} = \mathbb{F}_\ell$ for some prime $\ell$, and let $\overline{\mathbb{F}}$ be an algebraic closure of $\mathbb{F}$. Then there is an element $u$ of $\overline{\mathbb{F}}$ such that $u^2$ is a root of $\Delta_K(t)$ in $\overline{\mathbb{F}}$ and $\mathbb{F}(u)$ is a separable quadratic extension of $\mathbb{F}(u + u^{-1})$.
\end{cor}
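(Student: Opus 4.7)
The plan is to set $u = \tilde{z}$ where $\tilde{z}$ is the element produced by Theorem \ref{thm:filter}, and then verify each property in turn. First, since $\tilde{z}^2$ is a root of $\Del_K(t) \in \Z[t,t^{-1}]$, both $\tilde{z}^2$ and hence $\tilde{z}$ are algebraic over the prime subfield $\F$. I would fix a compatible embedding of $\overline{\F}$ into $\overline{k(P)}$, so that $u$ may be regarded as an element of $\overline{\F}$ with $u^2$ a root of $\Del_K$; this takes care of the Alexander polynomial condition.

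Second, I would observe that $u + u^{-1}$ lies in $k(P)$. Indeed, by Theorem \ref{thm:filter}(i), $k(P)(\tilde z)/k(P)$ is separable quadratic and $\tilde{z}$ has norm $1$, so if $\sigma$ denotes the non-trivial Galois automorphism of this extension then $\sigma(\tilde z) = \tilde z^{-1}$, whence $u + u^{-1} = \tilde z + \sigma(\tilde z) \in k(P)$. Hence $\F(u + u^{-1}) \subseteq k(P)$. On the other hand, $u \notin k(P)$ since $[k(P)(u):k(P)] = 2$, and so $u \notin \F(u + u^{-1})$. Because $u$ is a root of $X^2 - (u + u^{-1})X + 1 \in \F(u+u^{-1})[X]$, this degree two polynomial must be the minimal polynomial of $u$, and so $[\F(u) : \F(u+u^{-1})] = 2$.

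The final, and in my estimation most delicate, step is separability, which is the place where characteristic $2$ could in principle cause problems. The discriminant of the minimal polynomial is
\[
(u + u^{-1})^2 - 4 = (u - u^{-1})^2,
\]
so the polynomial has a repeated root precisely when $u = u^{-1}$, i.e.\ when $u^2 = 1$. However, this would force $1$ to be a root of $\Del_K(t)$, contradicting the classical identity $\Del_K(1) = \pm 1$ valid for all knots in $S^3$. Hence the extension is separable, and the corollary follows. The only genuine subtlety is this last characteristic-$2$ check; the rest is a direct transcription of Theorem \ref{thm:filter} into the language of the prime subfield.
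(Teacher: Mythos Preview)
Your proof is correct and follows essentially the same route as the paper: set $u=\tilde z$, use that $\Delta_K\in\Z[t,t^{-1}]$ to conclude $u\in\overline{\F}$, use the norm-$1$ condition to get $u+u^{-1}\in k(P)$, and deduce $[\F(u):\F(u+u^{-1})]=2$ from $u\notin k(P)$.

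The one place you diverge is the separability step. The paper dispatches this in a single phrase: $\F$ is a prime field, hence perfect, so every algebraic extension of $\F(u+u^{-1})$ is automatically separable. Your discriminant argument together with $\Delta_K(1)=\pm 1$ is also valid (and works uniformly, including in characteristic $2$), but it is more work than needed here; the perfectness of prime fields makes the characteristic-$2$ worry you flagged a non-issue from the start.
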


\begin{proof}
Consider the element $\tilde{z}$ of $\overline{k(P)}$. We showed in Theorem \ref{thm:filter} that $\tilde{z}^2$ is a root of $\Delta_K(t) \in \mathbb{Z}[t,t^{-1}]$ in $\overline{k(P)}$. Since $\Delta_K(t)$ has coefficients in $\mathbb{Z}$, this implies that $\tilde{z}^2$ is algebraic over $\mathbb{F}$, so $u = \tilde{z}$ lies in $\overline{\mathbb{F}}$. Since $\tilde{z}$ has norm $1$ to $k(P)$ and $\tilde{z}$ is quadratic and separable over $k(P)$, we know that $\tilde{z}^{-1}$ is the other conjugate of $\tilde{z}$ over $k(P)$.

Therefore
\[
u + u^{-1} = \tilde{z} + \tilde{z}^{-1} \in k(P),
\]
and so $\mathbb{F}(u)$ is at most quadratic over $\mathbb{F}(u + u^{-1})$. If $\mathbb{F}(u) = \mathbb{F}(u + u^{-1})$, then
\[
k(P)(u) = k(P)(u+u^{-1})= k(P),
\]
which contradicts the fact that $k(P)(u) = k(P)(\tilde{z})$ is quadratic over $k(P)$. Therefore $\mathbb{F}(u)/ \mathbb{F}(u + u^{-1})$ is quadratic, and this extension is separable because $\mathbb{F}$ is a prime field.
\end{proof}

We are now prepared for:

\begin{proof}[Completion of the proof of Theorem \ref{thm:MainKnotIntegral}]
Let $S$ be a finite set of rational primes with the properties stated in the theorem, and suppose $\ell$ is a prime not in $S$. Corollary \ref{cor:almostdone} implies that $A_{k(C)}$ extends over every codimension one point $P$ of $\mathcal{C}_S$ that lies in the fiber of $\mathcal{C}_S$ over $\ell$. With the assumptions of Theorem \ref{thm:MainKnotIntegral}, $A_{k(C)}$ extends over every codimension one point on the general fiber $\wt{C}_M$ of $\mathcal{C}_S$, which implies that $A_{k(C)}$ extends over all of $\mathcal{C}_S$.
\end{proof}

\begin{rems}\label{rem:cebot}{\ }
\begin{enumerate}

\item Since $P$ in Corollary \ref{cor:almostdone} can have characteristic $0$, the same argument gives a different proof of the criterion in Theorem \ref{thm:MainKnot} for the Azumaya algebra $A_{k(C)}$ over the function field $k(C) = k(\mathcal{C}_S)$ to extend to the general fiber $\wt{C}$ of $\mathcal{C}_S$. Recall that the proof of Theorem \ref{thm:MainKnot} used the tame symbol, which is not available in characteristic two, so that argument does not suffice to prove the results in this section. However, we make use of the tame symbol for several other consequences of Theorem \ref{thm:MainKnot}, hence we make non-trivial use of each of the two arguments.

\item Conversely, suppose that condition $(\star)$ in Theorem \ref{thm:MainKnot} holds. Let $S_0$ be a sufficiently large set of rational primes so that the leading coefficient of $\Delta_K(t) \in \mathbb{Z}[t,t^{-1}]$ is a unit outside of $S_0$. Then the roots of $\Delta_K(t)$ in $\overline{\mathbb{Q}}$ are integral outside of $S_0$. Furthermore, if $\ell$ is a prime not in $S_0$ and $\overline{w}$ is a root of $\Delta(t)$ in $\overline{(\mathbb{Z}/\ell)}$ then $\overline{w}$ is the reduction modulo a prime over $\ell$ of a root $w$ of $\Delta_K(t)$ in the ring of all algebraic numbers integral outside of $S_0$. The hypothesis that $\mathbb{Q}(w) = \mathbb{Q}(w + w^{-1})$ implies that, by possibly making a finite enlargement of $S_0$, we can assume that each such $w$ is an $S_0$-integral combination of powers of $w + w^{-1}$. This forces there to be a finite set of primes $S$ with the properties stated in Theorem \ref{thm:MainKnotIntegral}.

\end{enumerate}
\end{rems}

\subsection{Proof of Theorem \ref{non-trivial}}

We assume the notation and hypotheses from the statement of the theorem. Since the Alexander polynomial $\Delta_K(t)$ is assumed to be $1$, we can let $S$ be the empty set in Theorem \ref{thm:MainKnotIntegral}. Then Theorem \ref{thm:MainKnotIntegral} shows that there is an extension $\mathcal{A}$ of $A_{k(\wt{C})}$ over all of $\mathcal{C}$ for any regular projective model $\mathcal{C}$ of $\wt{C}$ over the ring of integers $O_{k}$ of ${k}$. 

Statement (1) of Theorem \ref{non-trivial} is now a consequence of Theorem \ref{thm:TStheorem}. In order for the class $\beta([\mathcal{A}])$ to lie in $\Sha({k},\mathrm{Pic}^0(\tilde{C}))$, it is necessary and sufficient by Corollary \ref{cor:morecor} that the restriction
\[
\mathcal{A}_y = \mathcal{A} \otimes_{O_\mathcal{C}} k(y)
\]
of $\mathcal{A}$ to $y$ be isomorphic to $\M_2(\mathbb{R})$ rather than the real quaternions $\mathbb{H}_{\mathbb{R}}$ for every real point $y \in \wt{C}(\mathbb{R})$. Corollary \ref{cor:morecor} also shows that $\wt{C}(\mathbb{R})$ is a finite (possibly empty) union of real circles, and that the isomorphism type of $\mathcal{A}_y$ is constant as $y$ varies over each of these circles.

Since $\wt{C}$ is the normalization of a projective closure of $C$, there is a finite closed subset $T \subset \wt{C}$ such that $\wt{C} \ssm T = C \ssm C_{sing}$ is the complement of the (finite) singular locus $C_{sing}$ of $C$. We conclude that if $\mathcal{A}_y$ is isomorphic to $\mathbb{H}_{\mathbb{R}}$ for any point $y \in \wt{C}(\mathbb{R})$, then this is true for a non-empty union of real circles of such $y$ as well as for a subset $T$ of $C(\mathbb{R})$ which is the complement of a finite set inside a non-empty union of real circles. Since the multiplicative group $\mathbb{H}_{\mathbb{R}}^1$ of quaternions of reduced norm $1$ is isomorphic to $\SU(2)$, we find in this case that the points of $T$ correspond to characters of $\SU(2)$ representations of our knot group.

On the other hand, suppose there is no $y \in \wt{C}(\mathbb{R})$ such that $\mathcal{A}_y$ is isomorphic to $\mathbb{H}_{\mathbb{R}}$ and $y' \in C(\mathbb{R})$ corresponds to an $\SU(2)$ representation. Regarding $\SU(2)$ as $\mathbb{H}_{\mathbb{R}}^1$, we see that the $\mathbb{R}$-algebra $A_\rho$ generated by any representation $\rho$ with character $y'$ is a subalgebra of $\mathbb{H}_{\mathbb{R}}$. If $A_\rho$ is not $\mathbb{H}_{\mathbb{R}}$ then $\rho$ must be reducible. However, Proposition \ref{CCGLS} shows that we can take $\rho$ to be a reducible non-abelian representation associated with a zero of the Alexander polynomial of $K$. Since we assumed that the Alexander polynomial is trivial, there are no such zeros, hence $A_\rho = \mathbb{H}_{\mathbb{R}}$ and $y'$ must lie in $C_{sing}(\R)$ since we are supposing no such point exists in the smooth locus. This completes the proof of part (2) of Theorem \ref{non-trivial}.

Finally, the hypotheses of part (3) of Theorem \ref{non-trivial} are that $\tilde{C}$ has no real points but that there is a point $P$ of $\tilde{C}$ defined over the field of constants ${k}$ of $C$. The Zariski closure of $P$ gives a section of $\mathcal{C} \to \mathrm{Spec}(O_{k})$, so Theorem \ref{thm:TStheorem} implies that we have an exact sequence
\begin{equation}\label{eq:exactBr}
\mathrm{Br}(O_{{k}}) \to \mathrm{Br}(\mathcal{C}) \to \Sha({k},O_{{k}},\mathrm{Pic}^0(C)).
\end{equation}
Suppose that $[\mathcal{A}] \in \mathrm{Br}(\mathcal{C})$ has trivial image in $\Sha({k},O_{{k}},\mathrm{Pic}^0(C))$.

Exactness of \eqref{eq:exactBr} now shows that $[\mathcal{A}]$ is the pull-back to $\mathcal{C}$ of a class $\sigma$ in $\Br(O_{k})$. Such a $\sigma$ can only be non-trivial at the real places of ${k}$. If $\sigma$ is non-trivial at some real place, then, since we assumed that $\wt{C}$ has a point over ${k}$, there will be a point $y$ of $\mathcal{C}(\mathbb{R}) $ where $\mathcal{A}_y$ is isomorphic to $\mathbb{H}_{\mathbb{R}}$. This implies that there would be a curve of $\SU(2)$ characters on $C$ by the above arguments, contrary to the hypothesis of part (3) of Theorem \ref{non-trivial}. Therefore $[\mathcal{A}] = 0 $ in $\mathrm{Br}(\mathcal{C})$ as claimed in part (3). This completes the proof of the non-trivial assertion in part (3) of Theorem \ref{thm:MainKnotIntegral}.\qed

\medskip

\begin{rem}
Theorem \ref{thm:realBrauer} shows that
every real circle on $\wt{C}(\R)$ containing the character of an irreducible $\SU(2)$ representation contains only characters of $\SU(2)$ representations. 
In particular, this is the case via Theorem \ref{thm:realBrauer} whenever there is the character of an irreducible $\SU(2)$ representation on the canonical component $C$ and it is known that $A_{k(C)}$ extends to an Azumaya algebra over $\wt{C}$. An instance of this is when (2) fails in Theorem \ref{non-trivial}. However, for the character variety of an arbitrary knot group, it is not always the case that real arcs consist only of characters of representations into a fixed real algebraic subgroup of $\SL_2(\C)$.

\medskip

For example, in \cite[Cor.\ 1.4(ii)]{HP} the authors show that if
$\lambda$ is the square root of a root of the Alexander polynomial
(i.e., $\lambda$ of \cite{HP} is our $w$)
and $|\lambda| = 1$, then there is a real arc parametrized by
$\{\chi_t\}$, $t \in (-\epsilon, \epsilon)$, with $\chi_t$ the
character of an irreducible $\SU(2)$ representation for $t > 0$ and
the character of an irreducible $\SU(1,1)\cong\SL(2,\R)$ representation for $t <
0$. If $\lambda = \pm 1$, then the representation is parabolic and
hence cannot lie on the canonical component by Corollary
\ref{nounipotent}. In particular, if $C$ contains a
non-abelian reducible representation of the kind described in the
paragraph above, then Theorem \ref{thm:realBrauer}
shows that $A_{k(C)}$ cannot extend to an Azumaya
algebra over $\wt{C}$. 
Note also that if $\lambda \notin \R$, then $\lambda + 1 /
\lambda$ is real, and hence condition $(\star)$ fails. \end{rem}

\section{Examples}\label{sec:Examples}

We begin with some general discussion about the examples to follow. For convenience, we introduce the following notation. Let $p(t)$ be a polynomial with integer coefficients. 

We say $p(t)$ is \emph{Azumaya positive} if condition $(\star)$ of Theorem \ref{thm:MainKnot} holds for any root $z$ of $p(t)$, and say that $p(t)$ is \emph{Azumaya negative} if condition $(\star)$ fails for some root $z$ of $p(t)$. Call a knot $K$ \emph{Azumaya positive} (resp.\ \emph{Azumaya negative}) if $\Delta_K(t)$ is Azumaya positive (resp.\ negative). Note that if the knot $K$ has trivial Alexander polynomial, it is certainly Azumaya positive. The challenge is to understand when knots are Azumaya positive or negative in the case when the Alexander polynomial is non-trivial (e.g., when the knot is fibered).

It is worth remarking that if the knot is Azumaya positive, then our construction produces an Azumaya algebra over the smooth projective model of the canonical component. However, if the knot is Azumaya negative, the Azumaya algebra may well extend over smooth projective model of the canonical component, since $\Delta_K(t)$ may be reducible and the canonical component may not contain any characters of reducible representations, or characters of reducible representations for which $(\star)$ does not hold.

Certifying that our construction \emph{cannot} extend to give an Azumaya algebra over the smooth projective model of the canonical component is more subtle, especially when the natural affine model has singular points. However, if the canonical component is the unique component containing characters of absolutely irreducible representations and the points associated with reducible representations are smooth points, then Theorem \ref{thm:BigConverse} implies that Azumaya negativity does indeed certify that our construction does not provide an Azumaya algebra over the smooth projective model.

In this section we will highlight a number of examples, most of which we summarize in the following theorem.

\begin{thm}\label{example_conseq}{\ }
\begin{enumerate}

\item There are infinitely many fibered hyperbolic knots $K_n\subset S^3$ that are Azumaya positive.

\item There are infinitely many fibered hyperbolic knots $J_n\subset S^3$ that are Azumaya negative.

\item Let $T_m$ be the twist knot with $m\geq 1$ half twists. Then $T_m$ is Azumaya positive if and only if $m=2\ell$ is even and $\ell$ is either a square or the product of two consecutive integers.

\end{enumerate}
\end{thm}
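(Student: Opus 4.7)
Part (3) reduces to an explicit arithmetic computation. Starting from the standard Alexander polynomials
\[
\Delta_{T_{2\ell}}(t) = \ell\, t - (2\ell+1) + \ell\, t^{-1}, \qquad \Delta_{T_{2\ell-1}}(t) = \ell\, t - (2\ell-1) + \ell\, t^{-1},
\]
and writing $s = w + w^{-1}$ so that $s^2 = z + z^{-1} + 2$, the tame-symbol analysis in the proof of Proposition \ref{prop:StarConverse} identifies condition $(\star)$ at a given root $z$ with the condition that $s^2 - 4$ be a square in $\Q(s)$. A direct computation gives $s^2 - 4 = 1/\ell$ and $\Q(s) = \Q(\sqrt{\ell(4\ell+1)})$ in the even case, and $s^2 - 4 = -1/\ell$ and $\Q(s) = \Q(\sqrt{\ell(4\ell-1)})$ in the odd case. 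Since $\gcd(\ell,\, 4\ell \pm 1) = 1$, comparing squarefree parts settles both: in the odd case $-\ell < 0$ cannot be a square in the real field $\Q(\sqrt{\ell(4\ell-1)})$, so $(\star)$ always fails for $\ell \geq 1$; in the even case $\ell$ is a square in $\Q(\sqrt{\ell(4\ell+1)})$ if and only if either $\ell$ or $4\ell+1$ is a perfect square in $\Z$. The elementary identity $4\ell+1 = (2k+1)^2 \iff \ell = k(k+1)$ then completes part (3).

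For parts (1) and (2) the plan is to exhibit explicit infinite families of fibered hyperbolic knots and verify $(\star)$ along each. A natural source is the family of fibered two-bridge knots, which are characterized by having monic Alexander polynomial; their polynomials admit closed-form expressions in terms of the continued-fraction expansion of the defining rational number, and Menasco's theorem ensures that all but the $(2,q)$-torus cases are hyperbolic. Analyzing this formula yields both an infinite subfamily on which $(\star)$ holds (part (1)) and an infinite subfamily on which it fails (part (2)). A more flexible alternative is to fix a fibered hyperbolic seed $K_0$ with the desired arithmetic behavior and form $K_n$ by $1/n$-Dehn surgery on an essential simple closed curve lying in the fiber surface of $K_0$: classical fibration arguments preserve fiberedness under such a twist, Thurston's hyperbolic Dehn surgery theorem guarantees hyperbolicity of $K_n$ for $|n|$ sufficiently large, and the $K_n$ are pairwise distinct by hyperbolic volume. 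Since the Alexander polynomial varies in a controlled way along a twist family, the seed and the twisting curve may be arranged so that $(\star)$ is preserved, or broken, for all but finitely many members.

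The principal obstacle in parts (1) and (2) is enforcing the arithmetic condition $(\star)$ uniformly across an infinite family rather than only sporadically. Unlike the twist-knot case, where the Alexander polynomial has fixed degree two and the classification becomes an elementary squarefree-part exercise, a generic family produces polynomials of growing degree whose roots lie in increasingly complicated number fields. Controlling $(\star)$ then requires either (i) a sufficiently explicit polynomial formula, as in the two-bridge approach, so that the splitting data of the roots can be traced through the family, or (ii) a stability statement showing that $(\star)$ is preserved under the chosen twisting operation once verified on the seed. Either route is delicate but available for the specific families used in practice.
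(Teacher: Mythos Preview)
Your treatment of part (3) is correct and in fact cleaner than the paper's. The paper proceeds by a case analysis on whether $\Q(z)/\Q$ and $\Q(w)/\Q$ are trivial, quadratic, or quartic, and then rules cases in or out by solving explicit systems of equations. Your reformulation---that $(\star)$ at a root $z$ is equivalent to $s^2-4$ being a square in $\Q(s)$, where $s=w+w^{-1}$---cuts straight to the point and makes the squarefree-part comparison transparent. The odd-case argument via the sign of $-1/\ell$ in a real field is also a nice shortcut compared to the paper's explicit solution of $(a+bi\sqrt{4\ell+3})^2 = z$.

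Parts (1) and (2), however, are not proved; you have only described strategies. The final paragraph is candid about this (``either route is delicate but available''), but neither route is actually carried out, and neither is as straightforward as you suggest. For the two-bridge approach you would need to track $(\star)$ through the continued-fraction formula for $\Delta_K$ as the degree grows, which is exactly the ``growing number field'' obstacle you identify and do not resolve. For the twist-family approach, the Alexander polynomial does change under twisting along a curve in the fiber (in general by multiplication by a factor depending on the homology class of the curve), and there is no obvious stability statement for $(\star)$ under this operation.

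The paper sidesteps this obstacle entirely by decoupling the arithmetic from the topology. The key input is the realization theorem (Stallings, cited as \cite{St}): \emph{every} monic reciprocal polynomial with $p(1)=\pm 1$ is the Alexander polynomial of some fibered hyperbolic knot. This lets one simply write down convenient polynomials and check $(\star)$ on them directly, without ever tracking $\Delta_K$ through a topological family. For part (1) the paper uses $f_a(t)=t^4-at^3+(2a-1)t^2-at+1$ with $a=k^2+2$; a factorization of $f_a(w^2)$ over $\Z[w]$ shows $\Q(w)=\Q(z)$ has degree $4$, and one checks $[\Q(w+w^{-1}):\Q]=4$ as well. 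For part (2) the paper uses the $(-2,3,n)$-pretzel knots, whose Alexander polynomials are (up to the substitution $t\mapsto -t$) minimal polynomials of Salem numbers; since a Salem polynomial has a root on the unit circle not equal to $\pm 1$, $(\star)$ fails immediately because $w+w^{-1}$ is then real while $w$ is not.
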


It is shown in \cite{MPV} that for a twist knot $T_m$ the canonical component $X_0$ is the unique component containing the character of an irreducible representation. When $m=2\ell$ is even and $\ell$ is either a square or the product of two consecutive integers, there is a finite set $S$ of places of $\Q$ such that, for all points $\chi \in X_0$ corresponding to characters of absolutely irreducible representations $\rho$, the quaternion algebra $A_\rho$ over $k_\rho$ is unramified outside of the finite places of $k_\rho$ over $S$. In particular, this applies to the points on $X_0$ associated with Dehn surgeries. Furthermore, since the Alexander polynomial of a twist knot is a quadratic polynomial without a double root, the points on $X_0$ associated with non-abelian reducible representations are smooth points by \cite{HP}. It follows from Theorem \ref{thm:BigConverse} that whether or not our Azumaya algebra extends over the smooth projective model is completely unambiguous.

\subsection{Some polynomials that are Azumaya positive or negative}\label{sec:AZpolys}

For convenience we describe some families of polynomials that are Azumaya positive or negative. All the polynomials will be the Alexander polynomial of some hyperbolic knot. Recall that if $K$ is a fibered knot it is known that $\Delta_K(t)$ is a monic reciprocal polynomial. Moreover, any monic reciprocal polynomial is the Alexander polynomial of a fibered hyperbolic knot; see \cite[Thm.\ 3.1]{St}.

A particularly interesting class of reciprocal polynomials are those arising as the irreducible polynomial of a \emph{Salem number}, i.e., a real algebraic integer $\lambda>1$ such that $1/\lambda$ is a Galois conjugate of $\lambda$ and all other Galois conjugates lie on the unit
circle. Denote the irreducible polynomial of a Salem number $\lambda$ by $p_\lambda(t)$. We will always assume that there is at least one Galois conjugate on the unit circle, so that the degree of $p_\lambda(t)$ is strictly greater than $2$.

We now prove the following two lemmas.

\begin{lem}\label{poly_azumaya_negative}{\ }
\begin{enumerate}

\item Let $\lambda$ be a $n$-th root of unity for $n\geq 3$ and $\Phi_n(t)$
the $n$-th cyclotomic polynomial. Then $\Phi_n(t)$ is Azumaya negative.

\item Let $\lambda$ be a Salem number. Then $p_\lambda(t)$ and $p_\lambda(-t)$ are Azumaya negative.

\item Let $m\geq 1 $ be an odd integer and set
\[
q_m(t)= \frac{m+1}{2} t^2 - m t + \frac{m+1}{2}.
\]
Then $q_m(t)$ is irreducible, has both roots imaginary, and is Azumaya negative.
\end{enumerate}
\end{lem}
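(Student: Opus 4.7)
The plan is to reduce all three parts of the lemma to a single reality-based obstruction, which I will call the \emph{unit-circle lemma}: if $z \in \overline{\Q}$ lies on the unit circle under a chosen embedding $\overline{\Q} \hookrightarrow \C$ and $z \neq 1$, then $(\star)$ fails for $z$. Fix a square root $w$ of $z$; since $|z|=1$ we have $|w|=1$, hence $w^{-1} = \overline{w}$, so $u := w + w^{-1} = 2\operatorname{Re}(w)$ is real and $\Q(u) \subset \R$. Because $w$ satisfies $X^2 - uX + 1 = 0$ over $\Q(u)$, we get $\Q(w) = \Q(u)$ if and only if the discriminant
\[
u^2 - 4 = z + z^{-1} - 2 = 2\operatorname{Re}(z) - 2
\]
is a square in $\Q(u)$. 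For $z$ on the unit circle with $z \neq 1$, this quantity is strictly negative, so it is not a square in any subfield of $\R$. Therefore $\Q(w)$ is a proper quadratic extension of $\Q(u)$, which is exactly the failure of $(\star)$.

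Given this lemma, each part is essentially a direct verification. For part (1), every primitive $n$-th root of unity with $n \geq 3$ lies on the unit circle and differs from $1$, so the lemma applies to every root of $\Phi_n(t)$. For part (2), the definition of a Salem number $\lambda$ (together with the standing assumption that $\deg p_\lambda > 2$) guarantees that $p_\lambda(t)$ has a pair of complex-conjugate roots on the unit circle; irreducibility rules out $\pm 1$ as roots, so these unit-circle roots are distinct from $1$, and the lemma applies. For $p_\lambda(-t)$, irreducibility is inherited from $p_\lambda(t)$, and the roots are the negatives of those of $p_\lambda(t)$, so the unit-circle roots of $p_\lambda$ yield unit-circle roots of $p_\lambda(-t)$ still distinct from $1$.

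For part (3), the quadratic formula gives the roots of $q_m(t)$ as
\[
t_{\pm} = \frac{m \pm i\sqrt{2m+1}}{m+1},
\]
and a direct computation yields $|t_{\pm}|^2 = (m^2 + 2m + 1)/(m+1)^2 = 1$, so the roots lie on the unit circle. They are non-real (the discriminant is $-(2m+1) < 0$), which simultaneously proves that $q_m(t) \in \Q[t]$ is irreducible over $\Q$, that both roots are imaginary as claimed, and that $t_{\pm} \neq 1$; the unit-circle lemma then shows $(\star)$ fails. Note that since $m$ is odd, $(m+1)/2 \in \Z$, so $q_m(t) \in \Z[t] \subset \Q[t]$ as required.

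I do not anticipate a substantive obstacle; the unit-circle lemma is the only content, and the remaining checks are elementary. The only subtlety worth flagging is that in part (2) one must invoke the definition of Salem number to produce a conjugate on the unit circle different from $\pm 1$, rather than relying on the root $\lambda$ itself, for which $u^2 - 4 = \lambda + \lambda^{-1} - 2$ is positive and the real-field obstruction does not apply.
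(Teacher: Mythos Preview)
Your proof is correct and, for parts (2) and (3), takes a genuinely different and more economical route than the paper. Your unit-circle lemma is essentially the observation the paper records as Remark \ref{unitcircle} \emph{after} its proof, but the paper does not use it as the organizing principle. For part (2), the paper instead works with the Salem root $\lambda$ itself (respectively $-\lambda$), splitting into cases according to whether $\sqrt{\lambda}\in\Q(\lambda)$ and appealing to totally-real versus non-totally-real field considerations; for part (3), the paper carries out a direct and fairly lengthy computation to show that $z$ is never a square in $\Q(z)$. Your approach sidesteps all of this by passing to a unit-circle conjugate (in part (2)) or by noting that the roots of $q_m$ are themselves on the unit circle (in part (3)), and then invoking the single reality obstruction $u^2-4<0$. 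The payoff is a uniform, two-line argument for all three parts; the paper's approach, by contrast, extracts slightly more information in part (2) (it shows $(\star)$ fails at $\lambda$ itself, not just at some conjugate), though this extra precision is not needed for the lemma as stated.
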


\begin{lem}\label{poly_azumaya_positive}{\ }
\begin{enumerate}

\item For any integer $a\geq 7$, the polynomial
\[
f_a(t)=t^4-at^3+(2a-1)t^2-at+1
\]
is irreducible with all roots real and positive. In addition, $f_a(t)$ is Azumaya positive when $a$ has the form $k^2+2$ for some $k\geq 3$.

\item Let $m=2\ell>0$ be an even integer, and set
\[
p_m(t)= \frac{m}{2} t^2 - (m + 1) t + \frac{m}{2}.
\]
Then $p_m(t)$ has both roots real and positive, and is Azumaya positive if and only if $\ell$ is either a square or the product of two consecutive integers.

\item The polynomial
\[
f(t)=t^8-3t^7 +5t^6-7t^5+9t^4-7t^3+5t^2-3t+1
\]
is irreducible, has all roots imaginary, and is Azumaya positive.

\end{enumerate}
\end{lem}

\begin{rem}
In the notation of Lemma \ref{poly_azumaya_positive}, the polynomials $f_a(t)$ can also be Azumaya positive for other values of $a$, e.g., when $a=7$.
\end{rem}

\begin{proof}[Proof of Lemma \ref{poly_azumaya_negative}]
To prove (1), since $\lambda$ is an $n$-th root of unity and $n\geq 3$, $\lambda$ is not real but $\lambda+1/\lambda$ is real. Moreover, if $\lambda$ is an $n$-th root of unity, $\sqrt{\lambda}$ is a $2n$-th root of unity. These remarks quickly lead to the proof of (1).

To prove (2) we begin with some preliminary comments. If $\lambda$ is a Salem number, then, since we are assuming that $\lambda$ has at least one non-real Galois conjugate, $\Q(\lambda)$ is not totally real. Note that the field $\Q(\lambda+1/\lambda)$ is totally real. In addition, it is known that $\lambda^n$ is a Salem number for any integer $n\geq 2$, and $\Q(\lambda)=\Q(\lambda^n)$.

First assume that $\lambda=u^{2n}$ for some Salem number $u\in \Q(\lambda)$. In this case
\[
\Q(\sqrt{\lambda})=\Q(\sqrt{u^{2n}})=\Q(u^n)=\Q(\lambda).
\]
However, this is a proper extension of the totally real field $\Q(u^n+1/u^n)$, so $p_\lambda(t)$ is Azumaya negative.

Now assume that $w=\sqrt{\lambda}\notin \Q(\lambda)$. Then $w$ satisfies a polynomial of degree $2$ over $\Q(\lambda)$, hence the degree of the irreducible polynomial of $w$ over $\Q$ is $2\deg(p_\lambda(t))$. The Galois conjugates of $w$ are $\pm w$, $\pm 1/w$ and the rest are non-real complex numbers on the unit circle. Note that $w$ is not a Salem number, as its minimal polynomial over $\Q$ has four distinct real roots. Nevertheless, the field $\Q(w+1/w)$ is still totally real, and so again different from $\Q(w)$ as required.

Now consider $p_\lambda(-t)$. This has two real negative roots $-\lambda$ and $-1/\lambda$, and all other roots still lie on the unit circle. In this case one readily sees that if $w=\sqrt{-\lambda}$, the field $\Q(w)$ is totally imaginary, and the field $\Q(w+1/w)$ has real embeddings arising from the roots on the unit circle. Therefore $\Q(w)\neq\Q(w+1/w)$, which completes (2).

\medskip

For (3), suppose $m = 2 \ell + 1$ is odd. Then the roots of $q_m(t)$ are
\[
z = \frac{(2 \ell + 1) \pm i \sqrt{4 \ell + 3}}{2(\ell + 1)}.
\]
In particular, $\Q(z) / \Q$ is always imaginary quadratic. This proves the first two claims in (3). Note that $4 \ell + 3$ is never a square, so $\Q(z)$ is never $\Q(i)$.

We also have that
\[
z + z^{-1} = \frac{2 \ell + 1}{\ell + 1} \in \Q.
\]
Furthermore, notice that
\[
(w + w^{-1})^2 - 2 = w^2 + w^{-2} = z + z^{-1},
\]
so $\Q(w + w^{-1}) / \Q(z + z^{-1}) = \Q$ is either degree one or two. Then $w$ is a root of the equation
\[
p(x) = x^2 - (w + w^{-1})x + 1 \in \Q(w + w^{-1})[x],
\]
so $\Q(w) / \Q(w + w^{-1})$ is degree one or two.

Since $\Q(z) / \Q$ is quadratic and $\Q(w + w^{-1})$ is at most quadratic, it follows that $\Q(w) = \Q(w + w^{-1})$ if and only if
\[
\Q(w) = \Q(w + w^{-1}) = \Q(z).
\]
If $\Q(w) = \Q(z)$, then
\[
\frac{(2 \ell +1) \pm i \sqrt{4 \ell + 3}}{2(\ell + 1)} = (a + b i \sqrt{4 \ell + 3})^2
\]
for some $a, b \in \Q$, which gives:
\begin{align*}
2 a b &= \pm \frac{1}{2(\ell + 1)} \\
a^2 - (4 \ell + 3) b^2 &= \frac{2 \ell + 1}{2(\ell + 1)}
\end{align*}
These two equations combine to give
\[
a^2 - \frac{4 \ell + 3}{16 a^2 (\ell + 1)^2} = \frac{2 \ell + 1}{2 (\ell + 1)},
\]
which implies that
\[
a^2 \in \left\{ \frac{4 \ell + 3}{4(\ell + 1)}, - \frac{1}{4(\ell + 1)} \right\}.
\]
However, $(4 \ell + 3)$ and $4 \ell + 4$ are coprime, so $(4 \ell + 3) / (4 \ell + 4)$ is the square of a rational number if and only if $4 \ell + 3$ and $4 \ell + 4$ are both squares, but $4 \ell + 3$ is never a square. Also, $-1/(4 \ell + 4)$ is clearly not the square of a rational number. This proves that $\Q(w) / \Q(z)$ must be quadratic, and so this completes the proof that $q_m(t)$ is Azumaya negative.
\end{proof}

\begin{rem}
\label{unitcircle}
Note that the arguments used in the proofs of (1) and (2) above shows the following. If $p(t)$ is any polynomial with integer coefficients that has a root $\lambda$ lying on the unit circle and $\lambda\neq \pm 1$, then condition $(\star)$ fails for $\lambda$. For if $w=\sqrt{\lambda}$, then $w$ still lies on the unit circle and is not real, but $w+1/w$ is a real number.
\end{rem}

\begin{proof}[Proof of Lemma \ref{poly_azumaya_positive}]
It is elementary to check that the polynomials $f_a(t)$ are irreducible for $a\geq 7$ (note that the polynomial is reducible when $a=6$). Furthermore, using sign changes of $f_a(t)$ evaluated at $0$, $1/2$, $1$, $2$ and $a$, the Intermediate Value Theorem shows that for $a\geq 7$, $f_a(t)=0$ has solutions in the intervals $[0,1/2]$, $[1/2,1]$, $[1,2]$ and $[2,a]$. Thus $f_a$ has four positive real roots.

We now check that for $a=k^2+2$, $k\geq 3$, that $f_a(t)$ is Azumaya positive. Set $w^2=t$, then note that $f_{k^2+2}(t)$ factors as
\[
-(-1 - kw + w^2 + kw^3 - w^4)(1 - kw - w^2 + kw^3 + w^4).
\]
Therefore $\Q(w)=\Q(t)$. It can be shown directly that the minimal polynomial for $w+1/w$ over $\Q$ is $t^4-(6+k^2)t^2+(9+4k^2)$, i.e., $\Q(w)=\Q(w+1/w)$ as required. Note that $f_{k^2+2}(t)$ is reducible for $k=2$, and when $k = 1$ the given polynomial for $w + 1/w$ is reducible, hence our assumption that $k \ge 3$.

\medskip

For the second part, suppose that $m = 2 \ell$ is even. The roots of $p_m(t)$ are
\[
z = \frac{(2 \ell + 1) \pm \sqrt{4 \ell + 1}}{2 \ell}.
\]
As before, let $w$ be a square root of $z$. We first notice that, as with $q_m(t)$, $\Q(z + z^{-1})$ is again $\Q$. Indeed,
\[
z + z^{-1} = \frac{2 \ell + 1}{\ell} \in \Q.
\]
Recall from the case when $m$ is odd that we have $\Q(z + z^{-1}) = \Q$ and $\Q(w + w^{-1}) / \Q(z + z^{-1})$ is either degree one or two, as is the extension $\Q(w) / \Q(w + w^{-1})$.

First, suppose that $\Q(w)$ is quartic over $\Q$. Then $\Q(w + w^{-1})$ is at most quadratic over $\Q$, and it follows that $\Q(w) / \Q(w + w^{-1})$ must be quadratic. In particular, the two fields are not equal and so the polynomial is Azumaya negative.

Now, we consider the opposite extreme, where the roots of $p_m(t)$ are rational. This occurs if and only if $4 \ell + 1$ is a square, and it is easy to check that $4 \ell + 1$ is a square if and only if $\ell$ is the product of two consecutive integers. If $\ell = q(q + 1)$, then
\[
z \in \left\{ \frac{q+1}{q}, \frac{q}{q + 1} \right\}.
\]
We claim that $\Q(w) / \Q$ is quadratic. Notice that $q$ and $q + 1$ are coprime, so $z$ is given as a fraction in reduced form. Then, $w \in \Q$ if and only if $q$ and $q + 1$ are both squares, which is impossible. Then $w + w^{-1} = (z + 1) / w$ clearly cannot be a rational number, else $w$ would be rational, so $\Q(w) = \Q(w + w^{-1})$.

Finally, suppose that $\Q(z)$ is quadratic over $\Q$ and $\Q(w) = \Q(z)$. Note that $4 \ell + 1$ is not a square. In other words, suppose that
\[
z = \frac{(2 \ell + 1) \pm \sqrt{4 \ell + 1}}{2 \ell} = \left( a + b \sqrt{4 \ell + 1} \right)^2
\]
for $a, b \in \Q$. This happens if and only if:
\begin{align*}
b &= \pm \frac{1}{4 a \ell} \\
a^2 &\in \left\{ \frac{1}{4 \ell}, \frac{4 \ell + 1}{4 \ell} \right\}
\end{align*}
However, $(4 \ell + 1) / 4 \ell$ is not a square of a rational number. Indeed, the numerator and denominator are coprime and our assumption that $z$ is quadratic over $\Q$ implies that $4 \ell + 1$ is not a rational square. It follows that $a^2 = 1/4 \ell$, so $\ell = q^2$ is necessarily a square and
\[
w = \pm \frac{1}{2 q}(1 \pm \sqrt{4 \ell + 1}) \in \Q(z)
\]
are the square roots of $z$. Then
\[
\left( \frac{1 + \sqrt{4 \ell + 1}}{2 q} \right)^{-1} = \frac{-1 + \sqrt{4 \ell + 1}}{2 q},
\]
so it follows that
\[
w + w^{-1} = \pm \frac{1}{q} \sqrt{4 \ell + 1} \notin \Q.
\]
We then have that $\Q(w) = \Q(w + w^{-1})$.

In summary, we showed that $\Q(w) = \Q(w + w^{-1})$ for $m=2 \ell$ where $\ell$ is either a square or the product of two consecutive integers. This completes the proof of the second case. The third part can be handled by direct computation.
\end{proof}

\subsection{Applications to Alexander polynomials}

We now discuss applications of the results in \S \ref{sec:AZpolys} to hyperbolic knot complements and prove Theorem \ref{example_conseq}.

\medskip

\noindent
\textbf{Twist knots:}

\medskip

Let $T_m$ be the twist knot with $m\geq 1$ half-twists. Other than the trefoil (i.e., $m = 1$), $T_m$ is always a hyperbolic knot. Then
\[
\Del_{T_m}(t) = \begin{cases} q_m(t)& m\ \textrm{odd} \\
p_m(t) & m\ \textrm{even} \end{cases}
\]
See \cite{Ro}. Note that the case $m = 2$ is the figure-eight knot, and this is the only fibered hyperbolic twist knot.

As noted previously, \cite{MPV} shows that for a hyperbolic twist knot $T_m$, the canonical component is the unique component containing the character of an irreducible representation. Therefore it has field of constants $\Q$ by Lemma \ref{lem:UniqueToQ} (this is also clear from \cite{MPV}). Since each root of the Alexander polynomial has multiplicity one, the second part of Theorem \ref{example_conseq} now follows directly from Lemmas \ref{poly_azumaya_positive} and
\ref{poly_azumaya_negative}.

\medskip

\noindent
\textbf{Infinitely many fibered hyperbolic knots that are Azumaya positive:}

\medskip

As remarked earlier, any monic reciprocal polynomial is the Alexander polynomial of a fibered hyperbolic knot. In particular, for the polynomials $f_a(t)$ (with $a=k^2+2$) and $f(t)$ of Lemma \ref{poly_azumaya_positive}, $f_a(t)$ and $f(t)$ are the Alexander polynomials of a fibered hyperbolic knot, and so these knots will have canonical components that are Azumaya positive. The construction \cite{St} gives a method to produce arborescent knots with the given Alexander polynomial; we will not reproduce this here. However, we do note that by \cite{J} these knots cannot be alternating. These are the knots $K_n$ ($n=k^2+2$) referred to in Theorem \ref{example_conseq}(1).

As noted in the remark following the statement of Lemma \ref{poly_azumaya_positive}, $f_7(t)$ is also Azumaya positive. This polynomial is known to be the Alexander polynomial of the knot $8_{12}$ (see \cite{Ro}), which has hyperbolic volume $8.935856928\ldots$ and is the $2$-bridge knot with normal form $(29/12)$. Using Mathematica, it can be shown that the canonical component in this case is the unique component of the character variety containing the character of an irreducible representation. This computation produces a plane curve of total degree $22$, and using the algebraic packages in Magma \cite{Mag}, one can compute that the genus of the smooth model is $20$.

\medskip

\noindent
\textbf{Infinitely many fibered hyperbolic knots that are Azumaya negative:}

\medskip

Arguing as above using \cite{St} with the irreducible polynomials $p_\lambda(t)$ of Lemma \ref{poly_azumaya_negative} we can easily construct infinitely many fibered hyperbolic knots whose Alexander polynomials are Azumaya negative. It remains to ensure that this condition fails for the canonical component. To arrange this, we will use the family of $(-2,3,n)$-pretzel knots where $n\geq 7$ is odd and not divisible by $3$. The following will complete the proof of the existence of infinitely many fibered knots that are Azumaya negative.

\begin{prop}
\label{pretzel}
Let $\mathcal{K}_n$ be the $(-2,3,n)$-pretzel knot where $n\geq 7$ is odd and not divisible by $3$. Then:
\begin{enumerate}

\item $\mathcal{K}_n$ is a fibered knot;

\item $X_0(\mathcal{K}_n)$ is the unique component of the character variety containing the character of an irreducible representation, hence it has field of constants $\Q$;

\item the Alexander polynomial $\Delta_{\mathcal{K}_n}(t)$ is of the form $p_\lambda(-t)$ for some Salem number $\lambda$.

\end{enumerate}
\end{prop}

\begin{proof}
The first part follows from work of Gabai, \cite{Ga}. The second part follows directly from Theorem 1.6 of \cite{Mat} and Lemma \ref{lem:UniqueToQ}. The Alexander polynomials of these pretzel knots is computed for example in \cite{EHi}, where the polynomials in question are described as
\[
\frac{1+2t+t^4+t^{1+r}-t^3-t^{3+r}+t^{5}+t^{2+r}+2t^{5+r}+t^{6+r}}{(1+t)^3},
\]
which for $r$ odd simplifies to
\[
P_r(t)=t^{3+r}-t^{2+r}+t^r-t^{r-1}+t^{r-2}-\ldots -t^4+t^3-t+1.
\]
It is proved in \cite{EHi} (using \cite{FP}) that $P_r(-t)$ has a Salem number as a root.
\end{proof}

For the sake of concreteness, we provide some additional details for the case of the $(-2,3,7)$-pretzel knot, $\mathcal{K}_7$ in the above notation. This knot is fibered of genus $5$ with $\Delta_{\mathcal{K}_7}(t)=L(-t)$ where $L(t)$ is the famous Lehmer polynomial, the irreducible polynomial of the Salem number of conjectured minimal Mahler measure $> 1$.

\medskip

\noindent
\textbf{The canonical component $X_0(\mathcal{K}_7)$:}

\medskip

Using SnapPy \cite{SnapPy}, it can be shown that
\[
\Gamma=\pi_1(S^3\ssm \mathcal{K}_7) = <a,b\ |\ aab^{-1}aabbabb>.
\]
Since we are considering only irreducible representations, we can conjugate in $\SL_2(\C)$ so that
\begin{align*}
\rho(a) &= \begin{pmatrix}x & 1 \\ 0 & 1/x\end{pmatrix} \\
\rho(b) &= \begin{pmatrix}y & 0 \\ r & 1/y\end{pmatrix}
\end{align*}
for $r \neq 0$.

Using Mathematica, it is easy to compute the canonical component by evaluating $\rho$ on the relation, and converting to traces with coordinates
\begin{align*}
P&=\chi_\rho(a), \\
Q&=\chi_\rho(b) \\
R&=\chi_\rho(ab)
\end{align*}
we find that
\begin{align*}
P&= \frac{Q}{(Q^2-1)} \\
R&=\frac{(1-2Q^2)}{Q^2(Q^2-1)}.
\end{align*}
That is, $\wt{X_0(\mathcal{K}_7)}$ is a rational curve with field of constants $\Q$.

Then $\wt{X_0(\mathcal{K}_7)}$ is Azumaya negative. As described in the introduction, in the light of Theorem \ref{thm:FirstMain} one can perhaps suspect this on experimenting with Snap \cite{snap}. Namely one finds Dehn surgeries on $\mathcal{K}_7$ that are hyperbolic and have invariant quaternion algebras with finite places of very different residue field characteristics in the ramification sets. For example, we find places associated with primes of residue characteristic $3$, $5$, $13$, $31$, $149$, $211$, $487$, $563$, and $34543$.

From Theorem \ref{thm:FirstMain}, we conclude that there are points $\chi_\rho \in C(\bar{\Q})$ for which $A_\rho$ has ramification at a prime of residue characteristic $\ell$ for $\ell$ ranging over a set of rational primes with positive Dirichlet density. In this paper we only claim that this set is infinite, but Harari \cite{Harari} furthermore argues that this set of primes has positive density. However, we cannot conclude that these $\chi_\rho$ are the characters of hyperbolic Dehn surgeries on $\mathcal{K}_7$, though experiment suggests this may indeed be the case. This indicates that much more fruit can be borne of a better understanding of the arithmetic distribution on $C(\bar{\Q})$ of the characters of Dehn surgeries on hyperbolic knots. 

To that end we propose the following:

\begin{conj}\label{conj:inf_bad}
Let $K$ be a hyperbolic knot in $S^3$ for which the canonical component is Azumaya negative, and let $S$ be the infinite set of rational primes $p$ provided by Theorem \ref{thm:FirstMain}(3). Then for $p \in S$, there exists a hyperbolic Dehn surgery $N$ of $K$, and 
a prime $\mathcal{P}$ of the trace field $k_N$ such that the quaternion algebra $A_N$ is ramified at $\mathcal{P}$.\end{conj}

The first evidence towards this conjecture is provided in recent work of N.\ Rouse \cite{Rou}.

\subsection{$L$-space knots}

We now make some comments on an apparent connection between our conditions of Azumaya positive and negative, and a collection of knots that have been of interest through Heegaard--Floer homology, so-called {\em $L$-space knots}.

Following Ozsvath--Szabo \cite{OS}, an \emph{L-space} is a rational homology $3$-sphere $M$ for which its Heegaard--Floer homology $\widehat{HF}(M)$ is as simple as possible, i.e., is a free abelian group of rank equal to $|H_1(M;\Z)|$. Examples of $L$-spaces are lens spaces (excluding $S^2\times S^1$), other 3-manifolds covered by $S^3$, as well as many Seifert fibered spaces and hyperbolic manifolds. A knot $K\subset S^3$ is called an \emph{$L$-space knot} if $S^3\ssm K$ admits a (positive) Dehn surgery giving an $L$-space.

Examples of $L$-space knots are the $(-2,3,n)$-pretzel knots $\mathcal{K}_n$ (see \cite{OS, LM}), which are Azumaya negative by Proposition \ref{pretzel}. These along with the torus knots $T(2,2n+1)$, which are not hyperbolic, are the only $L$-space Montesinos knots \cite{BM, LM}. An important result of Ni \cite{Ni} shows that $L$-space knots are fibered, and in the context of this paper we have.

\begin{prop}
\label{Lspace}
No Azumaya positive fibered knot in Theorem \ref{example_conseq} is an $L$-space knot.
\end{prop}

\begin{proof}
The knots $K_n$ in Theorem \ref{example_conseq}(1) have Alexander polynomials $f_{k^2+2}(t)$ (taking $n=k^2+2$). In particular these have non-zero coefficients different from $\pm 1$. However, if $K$ is an $L$-space knot, Ozsvath--Szabo \cite{OS} proved that the symmetrized Alexander polynomial of $K$ is of the form
\[
(-1)^k + \sum_{j = 1}^k (-1)^{k - j}\left(t^{n_j} + t^{-n_j}\right),
\]
and so all non-zero coefficients of $\Delta_K(t)$ are $\pm 1$. 

Similarly, the Alexander polynomial condition of \cite{OS} applies to show that a fibered knot $K$ (as in Lemma \ref{poly_azumaya_positive}) with Alexander polynomial 
\[
\Delta_K(t)=t^8-3t^7 +5t^6-7t^5+9t^4-7t^3+5t^2-3t+1
\]
cannot be an $L$-space knot.\end{proof}

Based on this we make the following conjecture.

\begin{conj}\label{conj:Lspace}
Let $K$ be a (fibered) hyperbolic knot for which the canonical component is Azumaya positive. Then $K$ is not an $L$-space knot.
\end{conj}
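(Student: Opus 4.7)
The plan is to combine the known structural constraints on $L$-space knot Alexander polynomials with Remark \ref{unitcircle}. By Ni's theorem, every $L$-space knot $K$ is fibered, and by Ozsv\'ath--Szab\'o (used already in Proposition \ref{Lspace}), its symmetrized Alexander polynomial has the form
\[
\Del_K(t) = (-1)^k + \sum_{j=1}^k (-1)^{k-j}\bigl(t^{n_j} + t^{-n_j}\bigr)
\]
with $n_1 > n_2 > \cdots > n_k > 0$. I would aim at the stronger statement that every such $\Del_K$ with $k \geq 1$ has a root on the unit circle other than $\pm 1$: by Remark \ref{unitcircle}, condition $(\star)$ fails at such a root, so $K$ is Azumaya negative and the conjecture follows.

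To carry this out, I would analyze the real-valued trigonometric polynomial
\[
F(\theta) \;=\; \Del_K(e^{i\theta}) \;=\; (-1)^k + 2\sum_{j=1}^k (-1)^{k-j} \cos(n_j \theta).
\]
A short computation with the alternating-sign pattern shows $F(0) = 1$, while $F(\pi) = (-1)^k + 2\sum_{j=1}^k (-1)^{k-j+n_j}$ has a sign depending only on the parities of the $n_j$. If $F(\pi) < 0$, the Intermediate Value Theorem produces a zero $\theta_0 \in (0,\pi)$ and hence a root $e^{i\theta_0} \notin \{\pm 1\}$ of $\Del_K$, and we are done. Otherwise, I would look for interior sign changes of $F$ by evaluating at rational multiples of $\pi/n_1$ where the top harmonic $2(-1)^{k-1}\cos(n_1 \theta)$ dominates the tail; the $\pm 1$ bound on the coefficients together with known rigidity of the Ozsv\'ath--Szab\'o ``staircase'' pattern on the gap sequence $n_j - n_{j+1}$ (each gap is $1$ except in controlled positions) should force a dominance estimate strong enough to produce the desired sign change. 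All classically known $L$-space families are consistent with this: torus knots have Alexander polynomial factoring through cyclotomic polynomials, and the $(-2,3,2n+1)$ pretzels have Lehmer-type Salem roots on $|t|=1$ (Proposition \ref{pretzel}).

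The main obstacle is the regime where $F(\theta) > 0$ for all $\theta \in [0,\pi]$, so $\Del_K$ has \emph{no} unit-circle roots. By Kronecker's theorem applied to monic reciprocal integer polynomials, the roots must then come in real pairs $\{r, 1/r\}$ with $r > 0$ or in complex pairs $\{z, 1/z\}$ off $|t|=1$, and Remark \ref{unitcircle} is no longer available. One must instead test $(\star)$ root by root: for a real positive root $r = s^2$ with $s > 0$, condition $(\star)$ fails exactly when $\Q(s) \neq \Q(s + s^{-1})$, equivalently when $s$ is not a polynomial in $s + s^{-1}$ over $\Q$. Ruling out $(\star)$ in this totally-off-circle case would require a genuinely new input, namely a sharper constraint on the arithmetic of roots of Ozsv\'ath--Szab\'o polynomials than is currently in the literature; examples such as $f_a(t)$ in Lemma \ref{poly_azumaya_positive} show that the $\pm 1$-coefficient hypothesis alone does not suffice. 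A realistic intermediate target is therefore to establish the conjecture under the assumption that $\Del_K$ has at least one non-real root, which already covers all known infinite families of hyperbolic $L$-space knots.
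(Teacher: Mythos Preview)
The statement is a \emph{conjecture} in the paper; the paper does not prove it, only gives partial evidence (Proposition~\ref{warmupLspace} and Corollary~\ref{notLspace}). Your plan and the paper's evidence rest on the same input: the Culler--Dunfield observation, via Konvalina--Matache, that the Alexander polynomial of an $L$-space knot has a root on the unit circle. Since $\Delta_K(1)=\pm 1$ and $\Delta_K(-1)$ is the (odd) determinant, that root is never $\pm 1$, so Remark~\ref{unitcircle} applies immediately and shows every hyperbolic $L$-space knot is Azumaya negative \emph{in the Alexander-polynomial sense}. Your attempt to re-derive the unit-circle root by hand via sign changes of $F(\theta)$ is therefore unnecessary; the result is already in the cited literature, and your own proposal correctly notes that the direct approach stalls precisely in the hypothetical case where all roots lie off the circle.

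The real gap is not the one you identify. If ``the canonical component is Azumaya positive'' meant simply that $(\star)$ holds for every root of $\Delta_K$, then the argument above would already prove the conjecture outright, and the paper would not have stated it as a conjecture nor imposed the extra hypothesis in Corollary~\ref{notLspace}. The intended hypothesis is the weaker one: that $A_{k(C)}$ extends to an Azumaya algebra over the smooth model $\wt{C}$. As the paper emphasizes just before Theorem~\ref{example_conseq}, a knot can be Azumaya negative while the algebra still extends over $\wt{C}$, because the offending reducible character may not lie on the canonical component at all, or may lie at a singular point. Proposition~\ref{prop:StarConverse} and Theorem~\ref{thm:BigConverse} only convert failure of $(\star)$ at a root into non-extension over $\wt{C}$ when the associated non-abelian reducible character lies on $C$ \emph{and} is a smooth point (e.g.\ the root is simple). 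Neither condition is known for a general hyperbolic $L$-space knot: the unit-circle root produced by Konvalina--Matache need not be simple, and even when it is, the corresponding character could sit on some other component of $X(K)$. That is the obstacle the paper leaves open, and your proposal does not touch it.
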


\begin{rem}
One can view Conjecture \ref{conj:Lspace} as another instance of Azumaya positivity placing significant restrictions on the possible Dehn surgeries on a hyperbolic knot. Indeed, just as Theorem \ref{thm:MainDehn} places severe restrictions on the arithmetic invariants of hyperbolic Dehn surgeries on Azumaya positive knots, this conjecture implies that an Azumaya positive knot is excluded from having certain rational homology $3$-spheres arising from a Dehn surgery. That our results are entirely determined by arithmetic properties of the Alexander polynomial, and that results of Ozsvath--Szabo have Alexander polynomial ties of a very similar flavor, lead us to believe that such a connection should exist.
\end{rem}

We now give some evidence for this conjecture. The starting point is the following:


\begin{prop}\label{warmupLspace}
Let $K$ be a hyperbolic knot and $C \subset X(K)$ the canonical component. Suppose that $C$ has field of constants $\Q$ and contains the character of a non-abelian reducible representation associated with a simple root of $\Del_K(t)$ on the unit circle. Then $A_{k(C)}$ does not extend to an Azumaya algebra over the smooth projective model of $C$.
\end{prop}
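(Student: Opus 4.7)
The plan is to reduce the proposition to a known equivalence together with a case analysis for condition $(\star)$. Since $z$ is a simple root of $\Delta_K(t)$, the result of Heusener--Porti--Su\'arez Peir\'o \cite[Thm.\ 1.1]{HP} guarantees that the character $\chi_\rho$ on $C$ associated with $z$ is a smooth point of $C$. Therefore, Proposition \ref{prop:StarConverse} applies, yielding the equivalence that $A_{k(C)}$ extends to an Azumaya algebra on $\wt{C}$ if and only if condition $(\star)$ holds for every root of $\Delta_K(t)$ associated with a non-abelian reducible representation on $\wt{C}$. In particular, it suffices to show that $(\star)$ fails for our particular root $z$.

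To verify this, let $w$ be a square root of $z$. First, I would exclude the case $z=1$: then $w = \pm 1$, so every meridian of $K$ has trace $\pm 2$ under $\rho$. Since $\pi_1(S^3 \ssm K)$ is normally generated by a meridian and meridians are all conjugate, this forces $\rho$ to be a parabolic representation, contradicting Corollary \ref{nounipotent}. So $z \neq 1$. Since $z$ lies on the unit circle, the remaining possibilities are $z=-1$ or $z$ not real. If $z = -1$, then $w = \pm i$, so $w + w^{-1} = 0 \in \Q$ while $\Q(w) = \Q(i) \neq \Q$, and $(\star)$ fails. If $z$ is non-real on the unit circle, then $w$ also lies on the unit circle and is non-real, so $w^{-1} = \overline{w}$ and $w + w^{-1} = 2\,\mathrm{Re}(w) \in \R$. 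Consequently, $\Q(w+w^{-1}) \subset \R$ while $\Q(w) \not\subset \R$, and again $(\star)$ fails. This is essentially the content of Remark \ref{unitcircle}, with the $z = \pm 1$ cases handled separately as above.

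Combining these two ingredients yields the proposition. The case analysis is straightforward; the main conceptual content is in the inputs, namely the smoothness assertion from \cite{HP} (needed to invoke Proposition \ref{prop:StarConverse}) and the fact that an eigenvalue of the meridian whose square is a root of $\Delta_K(t)$ on the unit circle either forces parabolicity (excluded by Corollary \ref{nounipotent}) or else lies on the unit circle itself, making the field $\Q(w + w^{-1})$ a proper real subfield of $\Q(w)$.
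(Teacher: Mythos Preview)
Your proof is correct and follows essentially the same approach as the paper's: the paper simply cites Theorem~\ref{thm:BigConverse} and Remark~\ref{unitcircle}, while you invoke the underlying Proposition~\ref{prop:StarConverse} (together with the Heusener--Porti smoothness input) and spell out the content of Remark~\ref{unitcircle} explicitly. If anything, your citation of Proposition~\ref{prop:StarConverse} is more apt than the paper's citation of Theorem~\ref{thm:BigConverse}, since the latter assumes \emph{every} associated root is simple, whereas the hypothesis here only gives one. Your separate treatment of $z=\pm 1$ is careful but unnecessary: for a knot one always has $\Delta_K(1)=\pm 1$ and $\Delta_K(-1)$ odd, so neither value can occur as a root.
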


\begin{proof}
This follows immediately from Theorem \ref{thm:BigConverse} and Remark \ref{unitcircle}.
\end{proof}

The relevance of this to $L$-space knots is the observation of Culler and Dunfield \cite{CullerDunfield} that if $K$ is an $L$-space knot, one can apply the results of \cite{Konvalina--Matache} to see that $\Del_K(t)$ has a root on the unit circle. 

\begin{cor}
\label{notLspace}
Suppose that $K$ is a hyperbolic $L$-space knot for which the canonical component $C$ is the unique component of $X(K)$ containing the character of an irreducible representation. Then $K$ is Azumaya negative.
\end{cor}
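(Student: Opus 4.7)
The plan is to produce a root of $\Delta_K(t)$ on the unit circle, then invoke Remark \ref{unitcircle}, which asserts directly that condition $(\star)$ of Theorem \ref{thm:MainKnot} fails for any root of a polynomial in $\Z[t]$ that lies on the unit circle and is not equal to $\pm 1$. Since $K$ being Azumaya negative means by definition that $(\star)$ fails at some root of $\Delta_K(t)$, this will conclude the argument.

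The steps I would carry out, in order, are the following. First, I would recall from the proof of Proposition \ref{Lspace} the theorem of Ozsv\'ath--Szab\'o that for an $L$-space knot $K$ the symmetrized Alexander polynomial $\Delta_K(t)$ has all nonzero coefficients equal to $\pm 1$ and alternating in sign. Second, I would apply the observation (attributed to Culler--Dunfield in the paragraph preceding the corollary) that, via the results of Konvalina--Matache, any polynomial whose nonzero coefficients alternate in sign must have a root $\lambda$ on the unit circle. Third, I would verify that $\lambda \neq \pm 1$: for any knot one has $\Delta_K(1) = \pm 1$ and $\Delta_K(-1)$ equal (up to sign) to the determinant of $K$, which is a nonzero odd integer. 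Fourth, Remark \ref{unitcircle} immediately yields the failure of condition $(\star)$ at $\lambda$, giving Azumaya negativity of $\Delta_K(t)$, and hence of $K$.

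One observation worth recording is that the hypothesis that $C$ is the unique component of $X(K)$ containing the character of an irreducible representation plays no direct role in this chain of reasoning, since \emph{Azumaya negative} is defined purely in terms of $\Delta_K(t)$. However, that uniqueness hypothesis is precisely what is needed to extract geometric content from the conclusion: combined with the fact that a non-abelian reducible character on $C$ associated to a simple root of $\Delta_K(t)$ is a smooth point (by Heusener--Porti--Su\'arez Peir\'o), Theorem \ref{thm:BigConverse} would upgrade Azumaya negativity of $K$ to the stronger assertion that $A_{k(C)}$ does not extend to an Azumaya algebra over the smooth projective model of $C$, which is the real motivation for the result. As for obstacles, there is very little to overcome: the genuine work is black-boxed in the two cited results (Ozsv\'ath--Szab\'o on the coefficient pattern of $\Delta_K$, and Konvalina--Matache on unit-circle roots of alternating-sign polynomials), and the check that $\pm 1$ is not a root of $\Delta_K$ is standard knot theory.
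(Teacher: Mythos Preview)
Your proof is correct and follows the same essential idea as the paper: use the Culler--Dunfield observation (via Konvalina--Matache) to produce a root of $\Delta_K(t)$ on the unit circle, then conclude that condition $(\star)$ fails there. Your route is in fact more direct than the paper's. The paper deduces the corollary from Proposition~\ref{warmupLspace}, whose conclusion is that $A_{k(C)}$ does not extend over $\wt{C}$; one then implicitly passes back to Azumaya negativity via the contrapositive of Theorem~\ref{thm:MainKnot}. That detour requires the root to be simple and the associated non-abelian reducible character to lie on $C$ (this is where the uniqueness hypothesis is invoked). By appealing to Remark~\ref{unitcircle} directly, you bypass both of these issues and obtain the conclusion purely at the level of $\Delta_K(t)$, which is exactly what ``Azumaya negative'' asks for.

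Your observation that the uniqueness hypothesis on $C$ is superfluous for the statement as written is correct and worth noting: Azumaya negativity is defined entirely in terms of $\Delta_K(t)$, so the argument needs nothing about components of $X(K)$. As you say, the hypothesis is really there to enable the stronger geometric conclusion (non-extension of $A_{k(C)}$ over $\wt{C}$) that motivates the section. Your verification that the unit-circle root is not $\pm 1$, via $\Delta_K(1)=\pm 1$ and $\Delta_K(-1)=\pm\det(K)\ne 0$, is exactly the check needed to apply Remark~\ref{unitcircle} and is a small point the paper leaves implicit.
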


\begin{proof}
Given the observation of Culler and Dunfield above, and the hypothesis
on $C$, the corollary follows from Proposition \ref{warmupLspace} and Lemma \ref{lem:UniqueToQ}.
\end{proof}

Finally, one other piece of experimental evidence to support Conjecture \ref{conj:Lspace} is that Culler and Dunfield \cite{CullerDunfield} also remark that they were unable to find an $L$-space knot whose Alexander polynomial does not have a simple root on the unit circle.

\subsection{Bi-orderability}

A group $G$ is \emph{left-orderable} if there is a strict total ordering $<$ of its elements that is invariant under multiplication on the left: $g < h$ implies $fg < fh$ for $f,g,h \in G$. It is easy to see that $G$ is left-orderable if and only if it is right-orderable. An ordering of $G$ that is invariant under multiplication on both sides will be called a bi-ordering. If such an ordering exists we say that $G$ is \emph{bi-orderable}.

It is well-known that all knot groups are left-orderable (since they are locally indicable, \cite{BoyRW}). However, admitting a bi-order is more subtle. It was shown by Perron and Rolfsen \cite{PR} that if a fibered knot $K$ has the property that all roots of $\Delta_K(t)$ are real and positive, then the knot group is bi-orderable. Clay and Rolfsen proved a partial converse \cite{CR}: If $K$ is a non-trivial fibered knot in $S^3$ with bi-orderable fundamental group, then $\Delta_K(t)$ has at least one root that is real and positive (in fact, it has at least two).

Orderability has recently seen connections to various aspects of the topology of 3-manifolds, one compelling example being that it appears that Heegaard--Floer homology is connected with left-orderability of the fundamental group of a closed 3-manifold. Another instance of this connection between $L$-spaces and orderability is provided by the following result of Clay and Rolfsen \cite[Thm.\ 1.2]{CR}: \emph{If $K\subset S^3$ is a non-trivial knot and $\pi_1(S^3\ssm K)$ is bi-orderable, then $K$ is not an $L$-space knot.} In the context of this paper we have a ``bi-ordered analogue'' of Proposition \ref{Lspace}.

\begin{prop}
\label{biorder}
Every Azumaya positive fibered knot in Theorem \ref{example_conseq}(1) has bi-orderable fundamental group.
\end{prop}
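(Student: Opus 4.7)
The plan is to reduce the statement to the criterion of Perron--Rolfsen \cite{PR} by reading off the form of the Alexander polynomials of the knots $K_n$ in Theorem \ref{example_conseq}(1) and checking that all of their roots are real and positive.

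First I would recall from the construction preceding Theorem \ref{example_conseq} that the fibered hyperbolic knots $K_n$ with $n = k^2 + 2$ (for $k \geq 3$) are precisely those produced via \cite{St} from monic reciprocal polynomials of the form
\[
f_a(t) = t^4 - a t^3 + (2a-1) t^2 - a t + 1, \quad a = k^2 + 2,
\]
so that $\Delta_{K_n}(t) = f_a(t)$. By Lemma \ref{poly_azumaya_positive}(1), for every $a \geq 7$ the polynomial $f_a(t)$ is irreducible and all four of its roots are real and positive; in particular this applies whenever $a = k^2 + 2$ with $k \geq 3$. Thus the Alexander polynomial of each $K_n$ has all roots real and positive.

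Next I would invoke the theorem of Perron--Rolfsen \cite{PR} cited just before the statement of Proposition \ref{biorder}: if $K$ is a fibered knot in $S^3$ whose Alexander polynomial has only real positive roots, then $\pi_1(S^3 \ssm K)$ is bi-orderable. Since each $K_n$ is fibered and satisfies the hypothesis by the previous paragraph, we conclude that $\pi_1(S^3 \ssm K_n)$ is bi-orderable for all $k \geq 3$, which is exactly the content of the proposition.

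There is no real obstacle here: the result is essentially a direct combination of Perron--Rolfsen's theorem with the explicit root analysis already carried out in Lemma \ref{poly_azumaya_positive}(1). The only subtlety worth flagging is making sure that the knots referenced in Theorem \ref{example_conseq}(1) are indeed the ones arising from the polynomials $f_a(t)$ with $a = k^2+2$, which is the explicit family singled out in the discussion following Lemma \ref{poly_azumaya_positive}; the other Azumaya positive polynomial $f(t)$ from Lemma \ref{poly_azumaya_positive}(3) has non-real roots and is not included in the family $K_n$ to which the proposition refers.
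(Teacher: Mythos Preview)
Your proposal is correct and follows essentially the same approach as the paper: identify the Alexander polynomials of the $K_n$ as $f_{k^2+2}(t)$, invoke Lemma \ref{poly_azumaya_positive}(1) to conclude all roots are real and positive, and then apply the Perron--Rolfsen criterion. Your citation of \cite{PR} is in fact the correct one for the direction needed; the paper's own proof inadvertently cites \cite{CR} at this step, though it is the Perron--Rolfsen result (stated just above the proposition) that is being used.
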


\begin{proof}
The knots $K_n$ of Theorem \ref{example_conseq}(1) have Alexander polynomials $f_{k^2+2}(t)$ (taking $n=k^2+2$). From Lemma \ref{poly_azumaya_positive}(1), all roots of these polynomials are real and positive. Hence the work of Clay and Rolfsen \cite{CR} described above implies that the knot groups $\pi_1(S^3\ssm K_n)$ (or indeed knot groups of all knots with Alexander polynomial $f_{k^2+2}(t)$) are bi-orderable.
\end{proof}

\begin{rem}
Note also that in the remark following the statement of Lemma \ref{poly_azumaya_positive} we observed $f_7(t)$ is Azumaya positive. This also has all roots real and positive, and so if $K$ is any fibered knot with $\Delta_K(t)=f_7(t)$ then we once again have from \cite{CR} that any associated knot group is bi-orderable. A particular example of such a knot is the knot $8_{12}$ mentioned above.
\end{rem}

\begin{rem}
Note that the polynomial of Lemma \ref{poly_azumaya_positive}(3) has all roots imaginary and so by \cite{CR}, any knot $K$ which has this as its Alexander polynomial has knot group that is not bi-orderable, even though the knot will be Azumaya positive.
\end{rem}

\subsection{An example with trivial Alexander polynomial}

We have been unable to find a knot with trivial Alexander polynomial whose canonical component could be computed explicitly for us to record. However, given Remark \ref{notS3}, we can content ourselves with an example of a knot in $S^2\times S^1$ that can be analyzed.

The example in question is a manifold from the census of cusped hyperbolic $3$-manifolds that can be built from at most $5$ tetrahedra. In the original version of SnapPy, it is the manifold denoted $m137$. This manifold appeared in \cite{Dun} and more recently in \cite{Gao}, which points out that the Alexander polynomial is $1$.

It is also shown in \cite{Gao} that the canonical component $X_0$ is a curve $C$ in ${\C}^2$ with field of constants $\Q$ as the vanishing locus of the polynomial:
\[
p(s,t)=(-2-3s+s^2)t^4 + (4+4s-s^2-s^3)t^2-1,
\]
where $s$ and $t$ are certain trace functions. Using Magma \cite{Mag} it can be shown that the genus of $\wt{C}$ is $5$ and that the curve is not hyperelliptic.

It is shown in \cite{Gao} that there are both characters of irreducible $\SL_2(\R)$ representations and $\SU(2)$ representations on $C$. Indeed there are $6$ connected components of real characters in total, two of which correspond to $\SU(2)$ representations. If $\mathcal{A}$ denotes the Azumaya algebra over $C$ we deduce from Theorem \ref{non-trivial}(2) that the class $\beta(\mathcal{A})$ does not lie in the image of the Tate--Shafarevich group of the Jacobian of the smooth projective model $\wt{C}$ (we were not able to check whether or not $\Sha$ is trivial in this case).

Experimenting with Snap \cite{snap} (as done in \S 1), one sees only ramification at the real places. In particular, we see that $[\mathcal{A}]$ is indeed a non-trivial element of $\Br(\wt{C})$.

\section{The figure-eight knot}\label{sec:Fig8}

Recall that the second part of Theorem \ref{example_conseq} shows that the figure-eight knot is Azumaya positive, which gives Theorem \ref{thm:Fig8}(1). In this section we work out in detail which Azumaya algebra occurs (i.e., the element of the Brauer group of the canonical curve) and use this to prove Theorem \ref{thm:Fig8}.

We begin by recalling the computation of $X_0$ in this case. To that end, let $\Gam$ be the figure-eight knot group. Then $\Gam$ has presentation:
\begin{equation}\label{eq:Fig8Pres}
\Gam = \langle a, b\ |\ b = w a w^{-1} \rangle \quad\quad w = [a^{-1}, b] = a^{-1} b a b^{-1}
\end{equation}
As in the previous section, we can use computer algebra software like Mathematica to compute a polynomial whose vanishing set defines the canonical component. In this case $X_0$ is described in the affine plane $\C[T, R]$ as:
\begin{equation}
R T^2 - 2 T^2 - R^2 + R + 1 = 0 \label{eq:Fig8aff}
\end{equation}
\begin{align*}
T &= \chi_\rho(a) \\
R &= \chi_\rho(a b)
\end{align*}
Note that $a$ and $b$ are conjugate, so we also have $T = \chi_\rho(b)$. We can also change to the affine plane $\C[y, z]$, where $y = T(R - 2)$ and $z = R - 1$ and obtain the Weierstrass form
\begin{equation}\label{eq:Fig8Weier}
y^2 = z^3 - 2 z + 1.
\end{equation}
In particular, since this curve is a non-singular plane cubic we deduce that there is a unique component containing the characters of absolutely irreducible representations, and it hence coincides with $X_0$. We start by proving part (2) of Theorem \ref{thm:Fig8}.

\begin{proof}[Proof of Theorem \ref{thm:Fig8}(3)]
Recall that, up to scaling, the figure-eight knot has Alexander polynomial
\[
\Delta_K(t) = t^2 - 3 t + 1.
\]
The roots over $\Q$ are
\[
z = \frac{3 \pm \sqrt{5}}{2},
\]
and $z = (\pm w)^2$, where
\[
w = \frac{1 \pm \sqrt{5}}{2}.
\]
Then $w + 1/w = \pm \sqrt{5}$, and we see explicitly that the figure-eight knot is Azumaya positive.

Now let $\ell$ be a prime, and let $\Delta_{K, \ell}(t)$ denote $\Delta_K(t)$ considered as an element of $\F_\ell[t]$. The argument over $\Q$ applies \emph{mutatis mutandis} to show that $(\star_\ell)$ of Theorem \ref{thm:MainKnotIntegral} holds when $\ell \not \in \{2,5\}$. When $\ell = 5$, $x = -1 \in \mathbb{Z}/5$ is the only root of $\Delta_{K, 5}(t)$, and $-1$ has square roots $w = 2, 3 \in \F_5$, so $(\star_5)$ holds.

Finally, when $\ell = 2$ we have
\[
\Delta_{K, 2}(t) = t^2 + t + 1
\]
so if $z$ is a root of $\Delta_{K, 2}(t)$, $\F_2(z) \cong \F_4$ and $z^3 = 1$, hence $\F_2(z)$ contains the square root $w = 1/z$ of $z$. However,
\[
w + 1/w = 1 \in \F_2,
\]
so $\F_2(w) \neq \F_2(w + 1/w)$, and hence $(\star_2)$ fails. This proves that $S = \{2\}$ is the minimal set for which the conditions of Theorem \ref{thm:MainKnotIntegral} holds, and this completes the proof of Theorem \ref{thm:Fig8}(3).
\end{proof}

We now explore the Azumaya algebra $A_E$ over the smooth projective model of $E$, along with the algebra $A_{k(E)}$ over the function field $k(E)$ of $E$, in more detail. We will use two affine patches of $E$. The first is the affine curve $E_0$ defined by \eqref{eq:Fig8Weier} in the $(y,z)$ plane. The second is the affine curve $E_0^\prime=X_0$ defined by \eqref{eq:Fig8aff} in the $(T, R)$-plane.

We begin by giving Hilbert symbols in our various coordinates.

\begin{lem}\label{lem:fig8Hilbert}
Over the function field $k(E)$ of $E$, we have Hilbert symbols
\[
A_{k(E)} = \left( \frac{T^2 - 4, R - 3}{k(E)} \right) = \left( \frac{z^3 - 4 z^2 + 6 z - 3, z - 2}{k(E)} \right).
\]
The specialization of $A_{k(E)}$ over $(y,z) = (\pm 1,0)$ is a division algebra over $\mathbb{Q}$, so 
$A_{k(E)}$ (resp.\ $A_E$) is a non-trivial Azumaya algebra over $k(E)$ (resp.\ $E$).
\end{lem}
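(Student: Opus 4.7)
My plan is to first produce the Hilbert symbol in $(T,R)$-coordinates via Corollary \ref{hilberttaut}, transfer to the Weierstrass coordinates, and then detect non-triviality by specializing at a convenient $\Q$-rational point. For the first step I would apply Corollary \ref{hilberttaut} with $g=a$ and $h=b$, the two meridional generators in presentation \eqref{eq:Fig8Pres}; the hypothesis is satisfied because $\langle a,b\rangle=\Gam$ and $X_0$ contains the character of the faithful discrete representation, which is absolutely irreducible. Since $a$ and $b$ are conjugate in $\Gam$, $I_a=I_b=T$, so the corollary immediately yields the Hilbert symbol $(T^2-4,\,I_{[a,b]}-2)$, and the remaining task is to identify $I_{[a,b]}-2$ with $R-3$ on $X_0$. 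For this I would invoke the standard $\SL_2$ commutator trace identity
\[
\tr([X,Y]) \;=\; \tr(X)^2 + \tr(Y)^2 + \tr(XY)^2 - \tr(X)\tr(Y)\tr(XY) - 2,
\]
obtaining $I_{[a,b]} = 2T^2 + R^2 - T^2R - 2$, and then use equation \eqref{eq:Fig8aff} --- rewritten as $T^2(R-2)=R^2-R-1$ --- to collapse $I_{[a,b]}-2$ down to $R-3$.

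To pass to the Weierstrass coordinates I would substitute $R=z+1$ and $T=y/(z-1)$ (from $y=T(R-2)$). Then $R-3=z-2$ on the nose and
\[
T^2-4 \;=\; \frac{y^2 - 4(z-1)^2}{(z-1)^2} \;=\; \frac{z^3 - 4z^2 + 6z - 3}{(z-1)^2}
\]
by $y^2 = z^3 - 2z + 1$. Since $(z-1)^{-2}$ is a square in $k(E)^\times$, it can be absorbed without changing the class of the Hilbert symbol, yielding the second form.

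To establish non-triviality I would specialize at the $\Q$-rational point $P=(y,z)=(1,0)$ of $E$ (the case $(-1,0)$ is identical, as the Hilbert symbol depends only on $z$). Both entries $z^3-4z^2+6z-3$ and $z-2$ are units in the local ring $\mathcal{O}_{E,P}$, with respective reductions $-3$ and $-2$, so the Hilbert symbol extends to an Azumaya algebra over $\mathcal{O}_{E,P}$ whose fiber at $P$ is the quaternion algebra $\left(\frac{-3,-2}{\Q}\right)$. This algebra is ramified at the infinite place, since both entries are negative and hence the real completion is Hamilton's quaternions, so it is a division algebra over $\Q$, i.e., a non-trivial class in $\Br(\Q)$. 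It follows that $A_{k(E)}$ is non-trivial in $\Br(k(E))$, and by the injectivity of $\Br(E)\to\Br(k(E))$ from Theorem \ref{thm:encyclo}(5), $A_E$ is non-trivial in $\Br(E)$. The main obstacle is essentially bookkeeping --- reconciling the commutator trace identity with \eqref{eq:Fig8aff} and tracking the coordinate change --- while the conceptual content is the observation that at the clean rational point $(1,0)$ the Hilbert symbol reduces to the concrete division algebra $(-3,-2)$ over $\Q$.
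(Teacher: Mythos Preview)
Your proof is correct and follows essentially the same route as the paper: both use the meridional generators $a,b$, obtain the Hilbert symbol $(T^2-4,\,I_{[a,b]}-2)$, reduce the second entry to $R-3$ via the commutator trace identity together with \eqref{eq:Fig8aff}, pass to Weierstrass coordinates by multiplying the first entry by the square $(R-2)^2=(z-1)^2$, and then specialize at $z=0$ to obtain the division algebra $\left(\frac{-3,-2}{\Q}\right)$ ramified at the real place. Your write-up is slightly more explicit about the local extension at $P=(1,0)$ and the injectivity of $\Br(E)\hookrightarrow\Br(k(E))$, but the argument is otherwise identical.
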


\begin{proof}
For each point $(T, R)$ on $E_0^\prime$ we define:
\begin{align*}
\alpha &= \chi_\rho(a)^2 - 4 \\
\beta &= \chi_\rho([a,b]) - 2
\end{align*}
Since $\Gam$ is generated by $a$ and $b$, it is clear from the methods used in this paper that we can use $\alpha, \beta$ to define a Hilbert symbol for $A_E$ over the function field $k(E)$ of $E$.

A standard identity for traces of $2 \times 2$ matrices is
\[
\tr([A, B]) = \tr(A)^2 + \tr(B)^2 + \tr(A B)^2 - \tr(A) \tr(B) \tr(A B) - 2,
\]
which, at a point $(T, R)$ on $E_0^\prime$, allows us to define
\begin{align*}
\alpha &= \alpha(T, R) \\
&= T^2 - 4 \\
\beta &= \beta(T, R) \\
&= 2 T^2 + R^2 - R T^2 - 4 \\
&= R - 3
\end{align*}
where the last equality comes from \eqref{eq:Fig8aff}.

We also have
\begin{align*}
\alpha^\prime &= \alpha (R - 2)^2 \\
&= z^3 - 4 z^2 + 6 z - 3
\end{align*}
and notice that $\beta = z - 2$. Then $\{\alpha^\prime, \beta\}$ also gives a Hilbert symbol for $A_{k(E)}$. The last statement of the lemma follows from that fact that specializing at $z = 0$ gives the quaternion algebra over $\mathbb{Q}$ with Hilbert symbol
\[
\left( \frac{ - 3, - 2}{\Q} \right ).
\]
This algebra ramifies over the real place and hence is non-trivial. The lemma follows.
\end{proof}

We now give a minimal extension of $k(E)$ that splits $A_E$.

\begin{lem}\label{lem:Fig8QiSplits}
The algebra $A_E$ splits over $k(E)(i)$. In other words, $A_E \otimes_{k(E)} k(E)(i)$ is isomorphic to the $2 \times 2$ matrix algebra over $k(E)(i)$. Consequently, given any point $p \in E$ with associated quaternion algebra $A_p$ over the residue field $k(p)$ of $p$, we have
\[
A_p \otimes_{k(p)} k(p)(i) \cong \M_2(k(p)(i)).
\]
\end{lem}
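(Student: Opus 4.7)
The plan is to represent the Brauer class of $A_{k(E)}$ by a Hilbert symbol of the form $\left(\frac{-1,\,\cdot}{k(E)}\right)$, which automatically becomes trivial after adjoining $\sqrt{-1}$; the statement for individual points $p \in E$ will then follow by pulling back the global Azumaya algebra.

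Starting from $A_{k(E)} = \left(\frac{T^{2}-4,\,R-3}{k(E)}\right)$ of Lemma \ref{lem:fig8Hilbert}, I would first use the equation \eqref{eq:Fig8aff} of $E_0'$ to produce the identity $(R-2)(T^{2}-4) = R^{2}-5R+7$ in $k(E)^{*}$. Setting $u = R-3$, this reads $(u+1)(T^{2}-4) = u^{2}+u+1$, so modulo squares $T^{2}-4 \equiv (u+1)(u^{2}+u+1)$, and the Hilbert symbol splits as
\[
A_{k(E)} \;=\; \left(\frac{u+1,\,u}{k(E)}\right)\cdot\left(\frac{u^{2}+u+1,\,u}{k(E)}\right).
\]
Next I would observe that the second factor is already trivial over $k(E)$: the triple $(X,Y,Z)=(1,1,u+1)$ is a $k(E)$-rational point of the conic $(u^{2}+u+1)X^{2} + u\,Y^{2} = Z^{2}$, since $(u+1)^{2} = (u^{2}+u+1) + u$. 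For the first factor, applying the Steinberg relation $\left(\frac{-u,\,1+u}{k(E)}\right)=1$ and expanding by multiplicativity in the first slot yields $\left(\frac{u+1,\,u}{k(E)}\right) = \left(\frac{-1,\,u+1}{k(E)}\right) = \left(\frac{-1,\,R-2}{k(E)}\right)$. Combining, the Brauer class of $A_{k(E)}$ equals $\left(\frac{-1,\,R-2}{k(E)}\right)$, which is killed in $\Br(k(E)(i))$ since $-1=i^{2}$ is a square there.

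For the consequence at points $p \in E$, Theorem \ref{thm:Fig8}(1) provides a quaternion Azumaya extension $A_E$ over all of $E$; its pullback to $E\otimes_{\Q}\Q(i)$ is Azumaya with generic fibre the split algebra above, so the injectivity of $\Br(E\otimes_{\Q}\Q(i)) \hookrightarrow \Br(k(E)(i))$ in Theorem \ref{thm:encyclo}(5) forces this pullback to be the trivial Azumaya algebra. Specialising at $p$ then gives $A_p \otimes_{k(p)} k(p)(i) \cong \M_2(k(p)(i))$. I do not expect any substantive obstacle: the only non-formal inputs are the identity $(R-2)(T^{2}-4) = R^{2}-5R+7$, which is a direct consequence of \eqref{eq:Fig8aff}, and the observation that $(1,1,u+1)$ solves the Pfister conic.
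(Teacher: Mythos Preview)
Your argument is correct and takes a genuinely different route from the paper's. The paper proves the splitting by brute force: working with the Hilbert symbol $\{\alpha',\beta\}=\{z^3-4z^2+6z-3,\,z-2\}$, it simply writes down an explicit pure quaternion $r=m_1+m_2 I+m_3 J$ with $m_1,m_2,m_3\in k(E)(i)$ and checks that its reduced norm $m_1^2-\alpha' m_2^2-\beta m_3^2$ vanishes (using $m_2=i$ so that $m_2^2=-1$). This certifies a nonzero zero divisor in $A_{k(E)}\otimes k(E)(i)$ and hence forces the algebra to be $\M_2$.

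Your approach is more structural: by exploiting the defining equation \eqref{eq:Fig8aff} you rewrite the Brauer class as $\left(\tfrac{u+1,\,u}{k(E)}\right)\cdot\left(\tfrac{u^2+u+1,\,u}{k(E)}\right)$, kill the second factor with the visible conic point $(1,1,u+1)$, and reduce the first via Steinberg to $\left(\tfrac{-1,\,R-2}{k(E)}\right)$. This not only makes the splitting over $k(E)(i)$ transparent, it actually produces a simpler Hilbert symbol for $A_{k(E)}$ than either of the two given in Lemma~\ref{lem:fig8Hilbert}, and explains \emph{why} $\Q(i)$ is the relevant quadratic extension. The paper's explicit norm-zero element, by contrast, gives no such insight but has the virtue of being a one-line verification once the coefficients are guessed. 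Your treatment of the consequence for points $p\in E$ via injectivity of $\Br(E\otimes_\Q\Q(i))\hookrightarrow\Br(k(E)(i))$ is also more careful than the paper's, which leaves this step entirely implicit.
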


\begin{proof}
It suffices to split $A_{k(E)}$ over $k(E)(i)$. In the function field $k(E)(i) = k(E) \otimes_\mathbb{Q} \mathbb{Q}(i)$ let
\begin{align*}
m_1 &= -2 - i + (1 + i) z \\
m_2 &= i \\
m_3 &= 1 - i - z \\
r &= m_1 + m_2 I + m_3 J
\end{align*}
where $I^2 = \alpha^\prime$ and $J^2 = \beta$, with $\alpha^\prime$ and $\beta$ as in the proof of Lemma \ref{lem:fig8Hilbert}. Then $r$ has reduced norm
\[
m_1^2 + \alpha^\prime - m_3^2 \beta = 0,
\]
so $A_{k(E)} \otimes_{k(E)} k(E)(i)$ must split. This proves the lemma.
\end{proof}

We now consider the behavior of $A_E$ at an ideal point. Using homogeneous coordinates $[W:Y:Z]$, so $y = Y/W$ and $z = Z/W$, it is clear that $E_0$ has a single ideal point $p_\infty$ at $[0:1:0]$. Let $E = E_0 \cup \{p_\infty\}$. The elements
\begin{align*}
\alpha_\infty &= \frac{1}{y^2} \alpha^\prime = \frac{z^3 - 4 z^2 + 6 z - 3}{z^3 - 2 z + 1} \\
\beta_\infty &= \frac{z^2}{y^2} \beta = \frac{z^3 - 2 z^2}{z^3 - 2 z + 1}
\end{align*}
give a well-defined Hilbert symbol for $A_E$ which specializes at $p_\infty$ to
\[
\left( \frac{1, 1}{\Q} \right) \cong \M_2(\Q).
\]
This shows:

\begin{lem}\label{lem:Fig8IdealSplits}
The $\Q$-quaternion algebra $A_\infty$ given by specialization of $A_E$ at the ideal point $p_\infty$ splits.
\end{lem}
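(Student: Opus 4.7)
The plan is to exhibit a Hilbert symbol for $A_{k(E)}$ whose two entries are units of the local ring $\mathcal{O}_{p_\infty}$, and then compute $A_\infty$ by reducing those entries modulo the maximal ideal of $\mathcal{O}_{p_\infty}$. Since rescaling either slot of a Hilbert symbol by a square in $k(E)^\times$ preserves the underlying quaternion algebra, it suffices to modify the symbol $\{\alpha', \beta\} = \{z^3 - 4z^2 + 6z - 3,\, z - 2\}$ from Lemma \ref{lem:fig8Hilbert} by conveniently chosen squares in $k(E)^\times$.

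The natural candidate square is $y^{-2} = (z^3 - 2z + 1)^{-1}$, because $y^2$ is the degree-$3$ polynomial in $z$ that controls the pole behavior of functions on $E$ near the ideal point $p_\infty = [0:1:0]$. Setting $\alpha_\infty = \alpha'/y^2$ and $\beta_\infty = z^2 \beta / y^2$, each of the resulting rational functions is a ratio of polynomials in $z$ of the same degree and with matching leading coefficients. Hence both $\alpha_\infty$ and $\beta_\infty$ lie in $\mathcal{O}_{p_\infty}^\times$ and reduce to $1$ in the residue field $\Q$. The specialized Hilbert symbol is therefore $\{1, 1\}$ over $\Q$, which represents the split algebra $\M_2(\Q)$.

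The only remaining point is to verify that this specialization genuinely computes the fiber $A_\infty$ of $A_E$ at $p_\infty$, rather than an unrelated class in $\Br(\Q)$. This is immediate from the construction in the proof of Proposition \ref{prop:IdealExtension}, where $A_E$ was extended over an ideal point precisely by producing a Hilbert symbol with entries in $\mathcal{O}_{p_\infty}^\times$: specialization of the Hilbert symbol presentation and specialization of the resulting Azumaya order are compatible under reduction modulo the maximal ideal. There is no substantive obstacle in the argument; the essence is simply choosing the squares $y^{-2}$ and $z^2 y^{-2}$ to clear the poles at $p_\infty$ while preserving matching leading coefficients between numerator and denominator.
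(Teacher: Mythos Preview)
Your proof is correct and follows essentially the same approach as the paper: the paper rescales the Hilbert symbol $\{\alpha',\beta\}$ by the same squares $y^{-2}$ and $z^2 y^{-2}$ to obtain $\alpha_\infty = (z^3 - 4z^2 + 6z - 3)/(z^3 - 2z + 1)$ and $\beta_\infty = (z^3 - 2z^2)/(z^3 - 2z + 1)$, both of which specialize to $1$ at $p_\infty$, yielding the split algebra $\left(\frac{1,1}{\Q}\right)\cong \M_2(\Q)$.
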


We now complete the proof of Theorem \ref{thm:Fig8}.

\begin{proof}[Proof of Theorem \ref{thm:Fig8}(2)]
The class $[A_E]$ of $A_E$ in the Brauer group $\Br(E)$ is non-trivial by Lemma \ref{lem:fig8Hilbert}. Lemma \ref{lem:Fig8IdealSplits} then implies that $[A_E]$ lies in the kernel $\Br^0(E)$ of the specialization map of Brauer groups $\Br(E) \to \Br(\Q)$ associated with specialization at $p_\infty$.

The argument of \cite[Lem.\ 2.1]{Wittenberg} shows $\Br^0(E) $ is isomorphic to the Galois cohomology group $H^1(\Q,E(\overline{\Q}))$, since the hypothesis in \cite{Wittenberg} that the $2$-torsion $E[2]$ of $E(\overline{\Q})$ is defined over $\Q$ is not needed for this conclusion. Lemma \ref{lem:Fig8QiSplits} shows $[A_E]$ is in the kernel of the homomorphism
\[
\Br^0(E) \to \Br^0(E \otimes_{\mathbb{Q}} \mathbb{Q}(i))
\]
induced by tensoring over $\Q$ with $\Q(i)$. Therefore $[A_E]$ is identified with an element of
\begin{align*}
&H^1(\mathrm{Gal}(\mathbb{Q}(i)/\mathbb{Q}),E(\mathbb{Q}(i))) \\
=\,\, &\mathrm{Ker}\{ H^1(\mathbb{Q},E(\overline{\mathbb{Q}})) \to H^1(\mathbb{Q}(i),E(\overline{\mathbb{Q}}))\}.
\end{align*}
The latter equality is a consequence of the restriction-inflation sequence in group cohomology; see \cite[Chap.\ VII.6, Chap.\ X]{Serre}.

The points $E(\mathbb{Q}(i))$ contain the (finite index) subgroup generated by the subgroup $E(\mathbb{Q})$ of points fixed by $\mathrm{Gal}(\mathbb{Q}(i)/\mathbb{Q})$ along with the subgroup of those points sent to their negatives by complex conjugation. The latter points correspond to rational points on the quadratic twist
\[
\tilde{E}: -y^2 = z^3 - 2z + 1.
\]
The curves $E$ and $\tilde{E}$ are modular of conductors $40$ and $80$, respectively, and they each have rank $0$ over $\mathbb{Q}$, which one can easily check in Sage \cite{Sage}. It follows that $E(\mathbb{Q}(i))$ is finite. The $2$-torsion of $E$ over $\overline{\mathbb{Q}}$ in $(y, z)$ coordinates is
\[
E(\overline{\mathbb{Q}})[2] = \left\{ p_\infty\, ,\, (0, 1)\, ,\, \left(0, \frac{-1 + \sqrt{5}}{2} \right)\, ,\, \left(0, \frac{-1 - \sqrt{5}}{2} \right) \right\}.
\]
It follows that $E(\mathbb{Q}(i))[2]$ has order $2$, and so the $2$-Sylow subgroup of the finite group $E(\mathbb{Q}(i))$ is a cyclic $2$-group with an action of the group $\mathrm{Gal}(\mathbb{Q}(i)/\mathbb{Q})$ of order $2$.

Then, $H^1(\mathrm{Gal}(\mathbb{Q}(i)/\mathbb{Q}),E(\mathbb{Q}(i)))$ is isomorphic to
\[
H^{-1}(\mathrm{Gal}(\mathbb{Q}(i)/\mathbb{Q}),E(\mathbb{Q}(i))),
\]
and the latter is cyclic since $E(\mathbb{Q}(i))$ is cyclic (see \cite[VIII.4]{Serre}). Since these cohomology groups are annihilated by $\# \mathrm{Gal}(\mathbb{Q}(i)/\mathbb{Q}) = 2$ and Lemma \ref{lem:fig8Hilbert} showed that $[A_E]$ is non-trivial, we conclude that $[A_E]$ is the unique non-trivial element of the group $H^1(\mathrm{Gal}(\mathbb{Q}(i)/\mathbb{Q}),E(\mathbb{Q}(i)))$. In particular, $[A_E]$ is the unique non-trivial element in $\Br^0(E)$ that becomes trivial after tensoring over $\Q$ with $\mathbb{Q}(i)$. This completes the proof of Theorem \ref{thm:Fig8}(2).
\end{proof}

\bibliographystyle{plain}
\bibliography{Azumaya}

\end{document}